\newtheorem{thm}{Theorem}[section]
\newtheorem{cor}[thm]{Corollary}
\newtheorem{lem}[thm]{Lemma}
\newtheorem{prop}[thm]{Proposition}
\theoremstyle{definition}
\newtheorem{defn}[thm]{Definition}
\theoremstyle{remark}
\newtheorem{rem}[thm]{Remark}
\newtheorem*{rem*}{Remark}
\numberwithin{equation}{section}
\newcommand{\ed}{\end {document}}
\newcommand{\ka}{\kappa}
\begin{document}

\title[Convergence of AC]{Sharp convergence to steady states of Allen-Cahn}

\author[D. Li]{ Dong Li}
\address{D. Li, Department of Mathematics, the Hong Kong University of Science \& Technology, Clear Water Bay, Kowloon, Hong Kong}
\email{mpdongli@gmail.com}

\author[C.Y. Quan]{Chaoyu Quan}	
\address{C.Y. Quan, SUSTech International Center for Mathematics, Southern University of Science and Technology,
	Shenzhen, P.R. China}
\email{quancy@sustech.edu.cn}

\author[T. Tang]{Tao Tang}
\address{T. Tang, Guangdong Provincial Key Laboratory of Computational Science and Material Design, Southern University of Science and Technology, Shenzhen, P.R. China; and Division of Science and Technology, BNU-HKBU United International College, Zhuhai, P.R. China}
\email{ttang@uic.edu.cn}

\author[W. Yang]{Wen Yang}
\address{\noindent W. Yang,~Wuhan Institute of Physics and Mathematics, Chinese Academy of Sciences, P.O. Box 71010, Wuhan 430071, P. R. China; Innovation Academy for Precision Measurement Science and Technology, Chinese Academy of Sciences, Wuhan 430071, P. R. China.}
\email{wyang@wipm.ac.cn}

\begin{abstract}
In our recent work 
 we  found a surprising breakdown of symmetry conservation: using standard numerical discretization with very high precision the computed numerical solutions corresponding to very nice initial data may converge  to completely incorrect steady
states due to the gradual accumulation of machine round-off error. We solved this issue by introducing a new
Fourier filter technique for solutions with certain band gap properties. To further investigate the attracting basin of steady states we classify in this work all possible bounded nontrivial steady states for the Allen-Cahn equation.
We characterize sharp dependence of nontrivial steady states on the diffusion coefficient and prove strict monotonicity of the associated energy. In particular,
we establish a certain self-replicating property amongst the hierarchy of steady states and give a full classification of their energies and profiles. We develop a new modulation theory and prove sharp convergence to the steady state with
explicit rates and profiles.
\end{abstract}

\maketitle

\vspace{1cm}
\section{Introduction}
In this paper, we consider the following one-dimensional Allen-Cahn equation posed on the periodic torus $\mathbb T=[-\pi,\pi]$:
\begin{equation}
\label{1.ac}
\begin{cases}
\partial_tu=\ka^2\partial_{xx}u-f(u),\\	
u\bigr|_{t=0}=u_0,	
\end{cases}
\end{equation}
where $\kappa>0$ measures the strength of diffusion,  $f(u)=u^3-u=F^{\prime}(u)$,
and $F(u)=(u^2-1)^2/4$ is the usual double-well potential. The function $u:\mathbb T\to \mathbb R$ represents the concentration difference of phases in an alloy and typically has values in the physical range $[-1,1]$.

In our concurrent work \cite{lqty2021}, we find a very  surprising breakdown of parity in typical
high-precision computation of \eqref{1.ac} with very smooth initial data.  For example take $\ka=1$ and consider the equation \eqref{1.ac} with the initial data $u_0(x)$ being an odd function of $x$
such as $u_0(x)=\sin x$. By simple PDE arguments the smooth solution should preserve the
odd symmetry for all time.  However numerical discretized solutions turn out  to fail to conserve
this parity and converge quickly to the spurious states $u=\pm 1$ in not very long time
simulations. This striking contradiction is a manifestation of the gradual accumulation of non-negligible
machine round off errors over time. To resolve this issue, we introduced a new Fourier filter method which works successfully for a class of initial  data with certain symmetry and band-gap properties. By eliminating the  unwanted projections into the unstable directions at each iteration, we rigorously  show that the filtered solution will converge to the true steady state in long time simulations.

A natural next task is to understand the situation for general solutions without symmetries or
band-gap properties.  The pivotal step is to categorize  the steady states of the elliptic Allen-Cahn equations and analyze in detail their spectral properties. For full generality we shall
consider the steady states of \eqref{1.ac} on the whole real axis, i.e.
\begin{equation}
\label{1.eac}
\kappa^2u''+u-u^3=0\quad \mathrm{in}\quad \mathbb{R}.
\end{equation}
In \cite{de1978} De Giorgi  raised the  problem about proving that
bounded solutions  to $\Delta u = F^{\prime}(u)$  in dimensions $2\le n\le 8$ which are monotone in one direction, must depend only on one variable in dimension. Since then there are many works in understanding the structure of the solutions. Particularly, in dimension $n=2$ and $n=3$, Ghoussoub-Gui \cite{gg1998} and Ambrosio-Cabr\'e \cite{ac2000} proved the conjecture respectively. Savin \cite{savin2009} proved the De Giorgi conjecture up to dimension 8 under some additional assumption. The same conclusion has been obtained by Wang \cite{wang2017} with a different method. In \cite{dkw2011} Del Pino-Kowalczyk-Wei  established the existence of a counterexample in dimensions $n\geq 9$. We refer the readers to \cite{aac2001} for  background on De Giorgi's conjecture, \cite{cs2014,cs2015} for the study on the symmetry properties of the solutions to the fractional Allen-Cahn equation and the recent survey \cite{cw2018} for some related open problems.  It is known that the monotone solutions to \eqref{1.eac} in any dimension are stable solutions, i.e., the second variation of the associated energy is non-negative, where the energy functional is defined as
\begin{equation}
\label{1.energy}
E(u)=\int_{\mathbb{R}}\left(\frac{\kappa^2}{2}|\nabla u|^2+\frac14(1-u^2)^2\right)dx.
\end{equation}
Recently, there is a new counterpart problem for the stable solutions to Allen-Cahn equation (see e.g. \cite{savin2009,pw2013,lww2017} and references therein). Based on the monotonicity assumption it is natural to consider the two sides limit (without loss of generality we assume the function is monotone in $x_n$)
\begin{equation}
	u^+:=\lim_{x_n\to+\infty}u,\quad u^-:=\lim_{x_n\to-\infty}u.
\end{equation}
It is known that the limit functions only depend on the previous $n-1$ variables. Savin \cite{savin2009} has proved if $u^\pm$ are $1-$D, then the original $u$ is also $1-$D. From a general perspective
it is of some importance to study the stable solutions and its energy functional \eqref{1.energy}
in order to understand the structure of steady states. In the first part of this paper, we shall classify all the steady states of the $2\pi$-periodic solutions to \eqref{1.eac}. Furthermore we consider the variation on the energy of the ground state with respect to $\kappa$.

\begin{thm}
\label{thm1.1}
Let $0<\kappa<1$ and $m_\kappa$ be the largest positive integer such that $m_\kappa \kappa<1$, then equation \eqref{1.eac} admits exactly $m_\kappa$ non-constant $2\pi$ periodic solutions up to some translation and odd reflection.	
\end{thm}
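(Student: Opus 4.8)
The plan is to reduce Theorem~\ref{thm1.1} to a phase-plane analysis of the conservative ODE \eqref{1.eac}. Writing it as $\kappa^2u''=u^3-u$, multiplying by $u'$ and integrating produces the conserved quantity $\mathcal{H}(u,u')=\tfrac{\kappa^2}{2}(u')^2+\tfrac12u^2-\tfrac14u^4$. The equilibria are the center $u=0$ (where $\mathcal{H}=0$) and the saddles $u=\pm1$ (where $\mathcal{H}=\tfrac14$). A short inspection of the level sets shows that for $\mathcal{H}>\tfrac14$ one has $(u')^2>0$ everywhere, so $u$ is strictly monotone and unbounded, while the level $\mathcal{H}=\tfrac14$ consists only of the heteroclinic/homoclinic connections joining $\pm1$. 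Hence every bounded non-constant solution is a closed orbit encircling the origin with $\mathcal{H}\in(0,\tfrac14)$, uniquely labelled by its amplitude $a\in(0,1)$ (the turning point, with $\mathcal{H}=\tfrac12a^2-\tfrac14a^4$). Each such orbit is a simple closed curve traversed once per minimal period, and the maps $u\mapsto-u$, $x\mapsto -x$ and $x$-translations carry the family of orbits into itself; so classifying $2\pi$-periodic solutions ``up to translation and odd reflection'' is the same as determining which amplitudes $a$ yield an orbit whose minimal period $T(a)$ satisfies $kT(a)=2\pi$ for some positive integer $k$.

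Next I would compute the period function. From $h-V(u)=\tfrac14(a^2-u^2)(2-a^2-u^2)$ with $V(u)=\tfrac12u^2-\tfrac14u^4$, one gets $(u')^2=\tfrac1{2\kappa^2}(a^2-u^2)(2-a^2-u^2)$, and the substitution $u=a\sin\theta$ gives the clean formula
\[
  T(a)=4\sqrt2\,\kappa\int_0^{a}\frac{du}{\sqrt{(a^2-u^2)(2-a^2-u^2)}}=4\sqrt2\,\kappa\int_0^{\pi/2}\frac{d\theta}{\sqrt{\,2-a^2(1+\sin^2\theta)\,}}.
\]
From the right-hand side the three facts we need are immediate: (i) for each fixed $\theta$ the integrand is strictly increasing in $a\in(0,1)$, since $2-a^2(1+\sin^2\theta)$ is strictly decreasing in $a$ and stays positive, so $T$ is smooth and strictly increasing on $(0,1)$; (ii) $T(0^+)=4\sqrt2\,\kappa\cdot\tfrac{\pi}{2\sqrt2}=2\pi\kappa$, consistent with the linearization $u''=-u/\kappa^2$ at the center; (iii) $T(1^-)=+\infty$, because the integrand behaves like $1/\cos\theta$ near $\theta=\pi/2$. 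Therefore $T$ is a strictly increasing bijection from $(0,1)$ onto $(2\pi\kappa,+\infty)$.

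Finally I would assemble the count. A non-constant continuous periodic solution has a minimal period $T(a)>0$, and $2\pi$ being a period forces $2\pi=kT(a)$, i.e. $T(a)=2\pi/k$, for some positive integer $k$. By the bijectivity of $T$, the equation $T(a)=2\pi/k$ has a (necessarily unique) solution $a\in(0,1)$ precisely when $2\pi/k>2\pi\kappa$, that is $k\kappa<1$, i.e. $1\le k\le m_\kappa$; the hypothesis $0<\kappa<1$ guarantees $m_\kappa\ge1$. Distinct values of $k$ produce orbits of distinct minimal periods $2\pi/k$, hence genuinely distinct solutions modulo translation, $x$-reflection and $u\mapsto-u$, and each admissible $k$ contributes exactly one such class. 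This gives exactly $m_\kappa$ non-constant $2\pi$-periodic solutions, as claimed.

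The only place calling for care is the preliminary step that rules out unbounded and heteroclinic behaviour and confirms that $T(a)$ is the minimal period of a simple closed orbit, so that ``$2\pi$ is a period'' really forces $2\pi\in T(a)\mathbb{Z}$; the strict monotonicity of the period, which is typically the crux in problems of this type, is made transparent here by the trigonometric substitution and needs no delicate estimate.
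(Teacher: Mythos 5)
Your proposal is correct and follows essentially the same route as the paper: the conserved quantity and level-set analysis reproduce Proposition \ref{pra.1}, and your period function $T(a)=4\sqrt2\,\kappa\int_0^{\pi/2}\bigl(2-a^2(1+\sin^2\theta)\bigr)^{-1/2}d\theta$ is exactly $4\sqrt2\,\kappa\,g(a)$ from Proposition \ref{sdy1}, whose strict monotonicity and range $(2\pi\kappa,\infty)$ are the paper's key facts. The only organizational difference is that you count solutions of $T(a)=2\pi/k$ directly, whereas the paper rescales $y=jx$ to reduce a solution of minimal period $2\pi/j$ to the ground state with diffusion coefficient $j\kappa$ (Theorem \ref{tha.1}); these are equivalent statements.
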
	

It is known that any periodic solution of \eqref{1.eac} is bounded. By Modica's estimate one can get that $|u|\leq1$, see \cite{modica1985}. For convenience of the readers we shall include an elementary proof for the one dimensional case, see Proposition \ref{pra.1}. By using Proposition \ref{pra.1} it suffices
for us to consider periodic solutions to \eqref{1.eac} satisfying $|u|<1$, since $u\equiv1$ or $u\equiv-1$  provides the trivial global minimizers of \eqref{1.eac} from an energy perspective.

To state the next result, we define the energy for  $2\pi$-periodic functions $u\in H^1(\mathbb T)$:
\begin{equation}
\label{1.energy-2}
E_\kappa(u)=\int_{\mathbb{T}}\left(\frac{\kappa^2}{2}|\partial_xu|^2+\frac14(1-u^2)^2\right)dx.
\end{equation}
Define
\begin{equation}
\label{1.energy-quantity}
E_{\ka} = \inf_{u \in \mathcal S}E_\kappa(u),
\end{equation}
where
\begin{align}
\mathcal S=\{\phi~|
~\phi(x)\in H^1(\mathbb T)~\mbox{solves}~\eqref{1.eac},
~\phi(x)=\phi(x+2\pi)
~\mathrm{and}~|\phi|<1,~x\in\mathbb R\}.
\end{align}
For the $2\pi$-periodic solutions of \eqref{1.eac}, we call $u$ a ground state if $u$ is the least energy solution. For fixed $0<\kappa<1$, by using the classification result in Proposition \ref{pra.1}, we  can prove that the ground state solution is unique up to a translation and reflection.
To fix the symmetries it is convenient to introduce the notion of odd zero-up ground states
(see Definition \ref{def3.2}). In particular if a ground state solution $u$ is odd and satisfy
$u'(0)>0$, we shall call it an odd zero-up ground state and  denote it by $U_\ka$. For any $0<\ka<1$, define $m_{\ka}\ge 1$ as the unique integer such that
	\begin{align}
		\frac 1 {m_{\ka}+1} \le \ka  <\frac 1 {m_{\ka}}.
	\end{align}
	For each $j=1,\cdots,m_{\ka}$, define (note below that $j\ka<1$)
	\begin{align} \label{tmp_kj01}
		\tilde u_{\ka, j} (x) = U_{j\ka} ( jx).
	\end{align}
	Then $\{ \tilde u_{\ka, j} \}_{j=1}^{m_{\ka}}$ are all the possible odd
	zero-up solutions to \eqref{3.ac}. Furthermore the energies of $\tilde u_{\ka, j}$ are
	given by
	\begin{align}
		E_{\ka}(\tilde u_{\ka, j} ) & = \int_{\mathbb T}
		\Bigl( \frac 12 (\ka \partial_x {\tilde u_{\ka,j} } )^2
		+ \frac 14 ( \tilde u_{\ka,j}^2-1)^2 \Bigr) dx
		= E_{j\ka} (U_{j\kappa}).
	\end{align}

With this notation, we now state the following structure
theorem on the energy functional $E_\kappa(u)$ of the $2\pi$-periodic solutions.

\begin{thm}
	\label{thm1.2}
	Let $E_\kappa$ be defined in \eqref{1.energy-quantity}. Then it can be achieved for any $\kappa>0.$ In addition, we have
	\begin{itemize}
		\item [(a).] $E_{\ka} =\frac \pi2$ for $\ka\ge 1$ and it is only achieved by the zero function;
		\item [(b).] $E_{\ka}$ is achieved by $U_\ka$ whenever $\kappa\in(0,1)$;
		\item [(c).] If $0<\ka_1<\ka_2\le 1$, then there is strict monotonicity $E_{\ka_1} < E_{\ka_2}$;
		\item [(d).] The odd zero-up ground state $U_\kappa$ satisfies
		\begin{align}
		\left|U_\kappa(x)-\tanh\left(\frac{x}{\sqrt{2}\kappa}\right)\right|\leq C\exp\left(-\frac{d}{\kappa}\right),
		\end{align}
		for some universal positive constants $C$ and $d$, and
		\begin{align}
		\lim\limits
		_{\ka \to 0} \frac {E_{\ka}} { {\ka} } = \frac 4 3 \sqrt 2 >0.
		\end{align}
		\item [(e).] For $0<\kappa<1$, the $2\pi$-periodic solutions of problem \ref{1.eac} have the following replica property: any $2\pi$-periodic solution $u$ of \eqref{1.eac} which is not
		identically $\pm 1$ or $0$ must  coincide (after some shift and odd reflection
		if necessary) with $\tilde u_{\kappa, j}$ for
		some integer $j <1/\kappa$.  Here $\tilde u_{\kappa,j}$ is defined in
		\eqref{tmp_kj01}.
	        Furthermore $E_\ka(u)=E_{m\ka}(U_{m\ka}).$
	\end{itemize}
\end{thm}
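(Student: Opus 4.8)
The plan is to route everything through the phase portrait of the ODE \eqref{1.eac} together with the classification recorded above. Multiplying \eqref{1.eac} by $u'$ and integrating gives, along any solution, the first integral $\frac{\kappa^2}{2}(u')^2=\frac14(1-u^2)^2-\frac14(1-a^2)^2$ with $a=\max|u|$. For $u\in\mathcal S$ one has $a<1$, so $u$ is either the constant $0$ or a symmetric closed orbit encircling the origin with amplitude $a\in(0,1)$ and fundamental period $T(a;\kappa)=\kappa\,T(a;1)$, where $T(\cdot\,;1)$ increases strictly from $2\pi$ (as $a\to0^+$) to $+\infty$ (as $a\to1^-$). Hence a non-constant member of $\mathcal S$ exists iff $T(a;\kappa)=2\pi/j$ for some integer $j\ge1$, i.e. iff $j\kappa<1$; this is Theorem~\ref{thm1.1}, and together with $\tilde u_{\kappa,j}(x)=U_{j\kappa}(jx)$ as in \eqref{tmp_kj01} it shows that, up to translation and odd reflection, $\mathcal S$ consists of $0$ and $\tilde u_{\kappa,j}$ for $1\le j\le m_\kappa$, with $E_\kappa(\tilde u_{\kappa,j})=E_{j\kappa}(U_{j\kappa})$.

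Granting this, (a) is immediate: for $\kappa\ge1$ no non-constant solution exists, so $E_\kappa=E_\kappa(0)=\frac14|\mathbb T|=\frac\pi2$, attained only by $0$. The pivotal preliminary I would prove next is that $g(\kappa'):=E_{\kappa'}(U_{\kappa'})$ is strictly increasing on $(0,1)$. The amplitude $a_1(\kappa')$ is smooth in $\kappa'$ by the implicit function theorem applied to $T(a;1)=2\pi/\kappa'$ (using $\partial_a T>0$), so $\kappa'\mapsto U_{\kappa'}$ is $C^1$; since $U_{\kappa'}$ is a critical point of $E_{\kappa'}$, the Euler--Lagrange term drops out of $g'(\kappa')$, leaving the envelope contribution $\int_{\mathbb T}\kappa'(U_{\kappa'}')^2\,dx>0$. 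This gives (c) on $(0,1)$, and the endpoint $\kappa_2=1$ follows because $a_1(\kappa')\to0$ as $\kappa'\to1^-$ forces $U_{\kappa'}\to0$ in $C^1$, hence $g(\kappa')\to E_1(0)=\frac\pi2$. Part (b) is then a finite comparison inside $\mathcal S$: among the $\tilde u_{\kappa,j}$ the least energy is $g(\kappa)=E_\kappa(U_\kappa)$ by the monotonicity just obtained, and $E_\kappa(U_\kappa)<\frac\pi2=E_\kappa(0)$ again by monotonicity and the $\kappa'\to1^-$ limit; so for $\kappa\in(0,1)$ the infimum \eqref{1.energy-quantity} is achieved by $U_\kappa$, which also proves the opening assertion that $E_\kappa$ is attained for every $\kappa>0$.

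For (d) I would first quantify the degeneration $a_1(\kappa)\to1$. Factoring $V(a)-V(u)=\frac14(a^2-u^2)(2-a^2-u^2)$ with $V(u)=\frac{u^2}2-\frac{u^4}4$ gives $\frac14 T(a;\kappa)=\sqrt2\,\kappa\int_0^a\big((a^2-u^2)(2-a^2-u^2)\big)^{-1/2}\,du$, whose only blow-up as $a\to1$ comes from a logarithmic singularity near $u=a$, so $T(a;\kappa)\asymp\kappa\,|\log(1-a^2)|$; imposing $T(a_1(\kappa);\kappa)=2\pi$ yields $1-a_1(\kappa)^2\le Ce^{-d/\kappa}$. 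Since the heteroclinic $\phi_\kappa(x)=\tanh\big(x/(\sqrt2\kappa)\big)$ satisfies the same first-integral relation with $a$ replaced by $1$, a Gronwall comparison on the interval about the origin where $U_\kappa$ is monotone propagates this exponentially small gap and yields the $C^0$ bound there. The energy asymptotics follow from the identity $E_\kappa(U_\kappa)=\int_{\mathbb T}\big(\frac12(1-U_\kappa^2)^2-\frac14(1-a_1(\kappa)^2)^2\big)\,dx$ (insert the first integral into \eqref{1.energy-2}): the second term is $O(e^{-2d/\kappa})$, and the first concentrates on the two transition layers of width $O(\kappa)$ about the zeros of $U_\kappa$, on each of which $(1-U_\kappa^2)^2\approx\operatorname{sech}^4(\cdot/(\sqrt2\kappa))$, so $E_\kappa(U_\kappa)=2\cdot\frac12\cdot\sqrt2\,\kappa\int_{\mathbb R}\operatorname{sech}^4y\,dy\cdot(1+o(1))=\frac{4\sqrt2}{3}\kappa\,(1+o(1))$, i.e. $\lim_{\kappa\to0}E_\kappa/\kappa=\frac43\sqrt2$. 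Finally (e) is bookkeeping: if $u$ is $2\pi$-periodic and solves \eqref{1.eac} but is not $\equiv0,\pm1$, then by Proposition~\ref{pra.1} and ODE uniqueness $|u|<1$ everywhere and $u$ is non-constant, so after a shift and possible odd reflection $u$ equals some $\tilde u_{\kappa,j}$ with $j<1/\kappa$, and its energy is $E_{j\kappa}(U_{j\kappa})$ by the identity above.

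The step I expect to be the real obstacle is (d): making the amplitude bound $1-a_1(\kappa)^2=O(e^{-d/\kappa})$ and the profile comparison $|U_\kappa-\phi_\kappa|\le Ce^{-d/\kappa}$ rigorous with explicit constants, and being precise about the range of $x$ on which the $C^0$ estimate is claimed — it can only be local, say on $[-\pi/2,\pi/2]$, since beyond its first extremum $U_\kappa$ returns to zero at $x=\pm\pi$ whereas $\phi_\kappa$ stays close to $\pm1$ — together with extracting the sharp constant $\frac43\sqrt2$ from the layer integral with honest error control. By contrast (c) is soft once the $C^1$-dependence of $\kappa\mapsto U_\kappa$ is established, and (a), (b), (e) fall out of the ODE classification plus the envelope monotonicity.
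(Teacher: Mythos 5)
Your proposal is correct, and for the heart of the matter --- the strict monotonicity (c) --- it takes a genuinely different route from the paper. The paper does not differentiate the energy in $\kappa$: it first proves a pointwise comparison lemma (Lemma \ref{lea.1}, $U_{\kappa_1}(x)>U_{\kappa_2}(x)$ on $(0,\tfrac\pi2]$ for $\kappa_1<\kappa_2$, obtained by comparing the integrands in the implicit relation \eqref{a.acahn-2} using $N_{\kappa_1}>N_{\kappa_2}$), and then converts this into energy monotonicity through the integration-by-parts identity $E^{(0)}_{\kappa}=\int_0^{\pi/2}(1-U_\kappa^4)\,dx$ from \eqref{a.g.com-1}; this is entirely elementary and needs no differentiability of $\kappa\mapsto U_\kappa$. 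Your envelope argument $g'(\kappa)=\int_{\mathbb T}\kappa(U_\kappa')^2\,dx>0$ is also valid --- $U_\kappa$ is a critical point of $E_\kappa$ over all of $H^1(\mathbb T)$, so the first-variation term against $\partial_\kappa U_\kappa$ vanishes --- but it requires you to first establish $C^1$ dependence of the amplitude and profile on $\kappa$ via the implicit function theorem on the period map, which is extra machinery the paper avoids; in exchange it yields the quantitative derivative formula, which the paper's comparison argument does not. The remaining parts track the paper closely: your period function $T(a;\kappa)$ is the paper's $g(N)$ up to normalization, your treatment of (a), (b), (e) matches the structure of Theorem \ref{tha.1} and Theorem \ref{grd_mono001}, and your (d) (logarithmic blow-up of the period integral giving $1-N_\kappa=O(e^{-d/\kappa})$, Gronwall comparison with $\tanh$, layer computation $\sqrt2\int_{\mathbb R}\operatorname{sech}^4y\,dy=\tfrac43\sqrt2$) is the same as Proposition \ref{sdy1} and the end of the proof of Theorem \ref{grd_mono001}. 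Your caveat that the $C^0$ bound in (d) can only hold on a half-period such as $[-\tfrac\pi2,\tfrac\pi2]$ is a correct reading: the paper itself only proves \eqref{sdy1.2} for $0\le x\le\tfrac\pi2$, and the unrestricted statement in Theorem \ref{thm1.2}(d) fails near $x=\pi$ where $U_\kappa$ returns to zero.
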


From Theorem \ref{thm1.1} we can see that $0$ is the only $2\pi$-periodic solution to equation \eqref{1.eac} whenever $\ka\ge1$. This is in complete accord with numerical experiments. Furthermore when $\ka>1$, $u(x,t)$ converges to $0$ exponentially as $O(e^{-(\ka^2-1)t})$, while
the convergence rate becomes $O(t^{-\frac12})$ for $\kappa=1$. In the second part of this work, we
shall rigorously prove these convergence results and identify the explicit profiles. Our strategy is quite robust and we shall illustrate it for  a  general fractional Allen-Cahn equation
\begin{equation}
\label{1.fac}	
\begin{cases}
\partial_tu=-\ka^2\Lambda^\gamma u+u-u^3,\quad (x,t)\in\mathbb{T}\times (0,\infty),\\	
u\bigr|_{t=0}=u_0,
\end{cases}		
\end{equation}
where $\Lambda^\gamma=(-\partial_{xx})^{\frac{\gamma}{2}}$ is the fractional Laplacian of order $\gamma\in(0,2].$ When $\gamma=2$ it coincides with the usual $-\partial_{xx}.$
For simplicity of presentation we state below a simple version
of the obtained results in Section 4.  Sharper results concerning profiles, rates etc can be
found in Section 4.
\begin{thm}[Vanishing as $t \to \infty$]
\label{thm1.3}	
Let $\ka\ge 1$ and $0<\gamma\le 2$. Assume $u_0$ is $2\pi$ periodic, odd and bounded. Suppose $u$ is the solution to \eqref{1.fac} corresponding to the initial data $u_0$.
If $\ka >1$, we have
\begin{equation}
\label{1.con}
\begin{aligned}
u(x,t)=e^{-(\ka^2-1)t}\alpha_*\sin x+r(t),\qquad \forall\, t\ge1,
\end{aligned}
\end{equation}
where $\alpha_*$ depends on $u_0,\gamma$ and $\ka,$ and $\|r(t)\|_{H^{10}(\mathbb T)}=o(e^{-(\ka^2-1)t})$ as $t\to+\infty$.

For $\ka=1$, we have
\begin{align}
\label{1.con1}
u(x,t)=t^{-\frac12}\beta_*\sin x+r_1(t),\qquad \forall\, t\ge1,
\end{align}
where $\beta_*$ depends on $u_0,\gamma$ and $\|r_1(t)\|_{H^{10}(\mathbb T)}=o(t^{-\frac12})$ as $t\to+\infty$.
\end{thm}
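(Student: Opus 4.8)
The plan is to run a modulation/energy-decomposition argument in Fourier space, exploiting the odd symmetry to kill all even modes and the spectral gap of the linearized operator $-\ka^2\Lambda^\gamma+1$ at the mode $\sin x$. First I would observe that since $u_0$ is odd and $2\pi$-periodic and the nonlinearity $u-u^3$ is odd, the solution $u(\cdot,t)$ remains odd for all $t$, so its Fourier expansion is $u(x,t)=\sum_{k\ge1}a_k(t)\sin(kx)$. The linear semigroup acts on the $k$-th mode with multiplier $e^{(1-\ka^2k^\gamma)t}$; for $\ka\ge1$ the exponent is $1-\ka^2\le 0$ on the first mode and strictly negative with a uniform gap on all modes $k\ge2$ (when $\gamma\le2$ one still has $\ka^2 k^\gamma -1 \ge \ka^2 2^\gamma -1 > \ka^2-1$). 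Thus the dynamics is that of a finite-dimensional center/slow mode $a_1(t)$ coupled to a fast-decaying infinite tail. Standard parabolic smoothing on $\mathbb T$ gives instantaneous regularization, so for $t\ge \tfrac12$ we may assume $u(t)$ is smooth with all $H^s$ norms under control and tending to $0$ (the energy $E$ is a Lyapunov functional and the only odd steady state for $\ka\ge1$ is $0$ by Theorem \ref{thm1.1}, so $u(t)\to0$ in, say, $H^{20}$).

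Next I would isolate the scalar ODE for $\alpha(t):=a_1(t)$. Projecting \eqref{1.fac} onto $\sin x$ gives
\begin{equation}
\dot\alpha = (1-\ka^2)\alpha - \tfrac34\alpha^3 - N(t),
\end{equation}
where $N(t)$ collects the contributions of the cubic term that involve the higher modes; schematically $N$ is a finite sum of products of three Fourier coefficients at least one of which has index $\ge2$, hence $|N(t)|\lesssim \|w(t)\|_{H^{10}}\,(\|w(t)\|_{H^{10}}+|\alpha(t)|)^2$ where $w:=u-\alpha\sin x$ is the tail. For the tail one writes Duhamel's formula against the semigroup restricted to modes $k\ge2$: $w(t)=e^{-(t-t_0)(\ka^2\Lambda^\gamma-1)}P_{\ge2}w(t_0)+\int_{t_0}^t e^{-(t-s)(\ka^2\Lambda^\gamma-1)}P_{\ge2}\big(u(s)-u(s)^3\big)ds$. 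Because the semigroup on $P_{\ge2}$ decays like $e^{-(\ka^2 2^\gamma-1)(t-t_0)}$ and parabolic smoothing absorbs the $H^{10}$ loss from the cubic term, a Gronwall/bootstrap argument shows $\|w(t)\|_{H^{10}}\lesssim e^{-\mu t}$ for some $\mu$ strictly larger than $\ka^2-1$ in the case $\ka>1$, and $\|w(t)\|_{H^{10}}\lesssim t^{-3/2}$ (one full power better than $\alpha$) in the critical case $\ka=1$. With this control, $N(t)$ is negligible compared to the linear term in the $\alpha$-equation.

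Finally I would solve the scalar equation asymptotically. If $\ka>1$, set $\beta(t):=e^{(\ka^2-1)t}\alpha(t)$; then $\dot\beta = e^{(\ka^2-1)t}\big(-\tfrac34\alpha^3-N(t)\big)$, and since $\alpha(t)=O(e^{-(\ka^2-1)t})$ the right-hand side is $O(e^{-2(\ka^2-1)t}+e^{(\ka^2-1)t-\mu t})$, which is integrable on $[1,\infty)$; hence $\beta(t)$ converges to a finite limit $\alpha_*$, and $\alpha(t)-e^{-(\ka^2-1)t}\alpha_*=o(e^{-(\ka^2-1)t})$. Combining with the tail estimate gives \eqref{1.con} with $r(t)=w(t)+(\alpha(t)-e^{-(\ka^2-1)t}\alpha_*)\sin x$, both pieces $o(e^{-(\ka^2-1)t})$ in $H^{10}$. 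If $\ka=1$, the equation is $\dot\alpha=-\tfrac34\alpha^3-N(t)$ with $N$ lower order; the leading balance $\dot\alpha\sim-\tfrac34\alpha^3$ integrates to $\alpha(t)^{-2}\sim\tfrac32 t$, i.e. $\alpha(t)\sim(\tfrac23)^{1/2}t^{-1/2}$; a more careful analysis (e.g. tracking $\alpha(t)^{-2}-\tfrac32 t$ and showing it is $o(t)$, using $\int^\infty |N(s)|\,\alpha(s)^{-3}ds<\infty$ from the improved tail decay) pins down $\beta_*=\sqrt{2/3}$ and the error $r_1(t)=o(t^{-1/2})$ in $H^{10}$. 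The main obstacle I anticipate is the critical case $\ka=1$: there is no spectral gap at the first mode, so one must show that the higher-mode tail really does decay strictly faster than $t^{-1/2}$ (to make $N$ genuinely subleading) and then handle the delicate algebraic-rate ODE asymptotics uniformly — closing the bootstrap here requires carefully matching the a priori assumed rate of $w$ with the rate produced by Duhamel, which is where most of the technical work lies.
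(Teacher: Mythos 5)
Your overall strategy --- decompose $u=\Pi_1 u+\Pi_{\ge 2}u$, show that the tail decays strictly faster than the first mode ($e^{-\eta_1 t}$ with $\eta_1>\kappa^2-1$ for $\kappa>1$, resp.\ $t^{-3/2}$ for $\kappa=1$), and then perform asymptotic integration of the scalar ODE for the $\sin x$ coefficient --- is exactly the route taken in the paper (Theorems \ref{5sdy3} and \ref{5sdy4}), and your treatment of the case $\kappa>1$ is correct.

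There is, however, a genuine error in your handling of the critical case $\kappa=1$: you assert that the balance $\dot\alpha\sim-\tfrac34\alpha^3$ forces $\alpha(t)\sim\sqrt{2/3}\,t^{-1/2}$ and thus ``pins down $\beta_*=\sqrt{2/3}$''. This is false in general. First, $\alpha$ may be negative, so at best $\beta_*=\pm\sqrt{2/3}$. More seriously, $\alpha(t)$ may vanish identically (take $u_0(x)=\sin 2x$: the solution lives entirely on the modes $\sin 2mx$, so $\Pi_1 u\equiv 0$ and $\beta_*=0$) or may decay faster than $t^{-1/2}$, and in that regime dividing by $\alpha$ or ``tracking $\alpha(t)^{-2}-\tfrac32 t$'' is meaningless; the theorem deliberately allows $\beta_*=0$. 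The genuinely delicate point, which your sketch defers to ``a more careful analysis,'' is precisely the dichotomy for the ODE $\theta'=-\tfrac32\theta^2+O(t^{-3})$ with $\theta=\alpha^2\ge 0$: either $\limsup_{t\to\infty} t\theta(t)>0$, in which case a barrier argument (Lemma \ref{leTp5212}) upgrades this to a genuine lower bound and yields $\theta(t)=\theta_*/t+O(t^{-2}\ln t)$ with $\theta_*>0$, hence $\beta_*=\pm\sqrt{\theta_*}$; or $t\theta(t)\to 0$, in which case an iteration/rescaling scheme (Lemma \ref{leTp5214}) shows $\theta(t)=O(t^{-2})$, i.e.\ $\alpha(t)=O(t^{-1})$ and $\beta_*=0$. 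Without this case analysis the argument does not close; the paper devotes Lemmas \ref{leTp5212}--\ref{leTp5214} and Proposition \ref{prTp8321} to exactly this step. Everything else in your proposal (propagation of odd symmetry, parabolic smoothing, the Duhamel bootstrap for the tail, and the convergence of $e^{(\kappa^2-1)t}\alpha(t)$ when $\kappa>1$) matches the paper's argument.
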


When $\ka\in(0,1)$, the corresponding theory of convergence becomes quite involved. Indeed, from Theorem \ref{thm1.1} we see that the number of steady states (up to identification of
symmetry) increases as $O(1/\ka)$ when $\ka$ decays to zero.
At the moment there is no general theory for the precise identification of
the corresponding steady for arbitrary initial data.
However for a class of benign initial data, we have  the following 
precise and definite convergence results.

\begin{thm}
\label{thm1.4}	
Let $0<\ka<1$. Assume the initial data $u_0:\mathbb{R}\to\mathbb{R}$ is $2\pi$-periodic, odd and  non-negative in $[0,\pi],$ then we have $u(x,t)\to U_\ka$ or $0$ as $t\to\infty$. Moreover, if $u_0\neq0$ and
and $E_\ka(u_0)\leq\frac\pi2$, then $u(x,t)\to U_\ka$ as $t\to\infty$ and the rate of convergence is exponential in time.
%
%
\end{thm}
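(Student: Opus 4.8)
The plan is to exploit the order-preserving structure of the Allen--Cahn flow on the half-period $[0,\pi]$ together with the classification of steady states from Theorem \ref{thm1.1}. First I would set up the functional framework: by the assumed parity, $u(\cdot,t)$ is odd for all $t$, hence $u(0,t)=u(\pi,t)=0$, so the dynamics on $[0,\pi]$ is governed by \eqref{1.fac} (with $\gamma=2$) under homogeneous Dirichlet boundary conditions. Odd periodic initial data that are non-negative on $[0,\pi]$ satisfy $0\le u_0(x)\le 1$ on $[0,\pi]$ after invoking Proposition \ref{pra.1} (or rather its maximal principle consequence: the flow keeps $|u|\le 1$). By the comparison principle for the parabolic equation on $[0,\pi]$ with the Dirichlet data, $0\le u(x,t)\le 1$ is preserved, and moreover $u(x,t)\ge 0$ on $[0,\pi]$ for all $t>0$; in fact by the strong maximum principle $u(x,t)>0$ in the open interval unless $u\equiv 0$. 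So the $\omega$-limit set lies among the \emph{nonnegative} steady states on $[0,\pi]$ vanishing at the endpoints. Being a gradient flow for $E_\kappa$, the trajectory is precompact in $H^1$ and the $\omega$-limit set consists of equilibria; the only odd $2\pi$-periodic steady states that are nonnegative on $[0,\pi]$ and vanish at $0,\pi$ are, by Theorem \ref{thm1.1} and the replica description \eqref{tmp_kj01}, exactly $0$ and $U_\kappa$ (the higher replicas $\tilde u_{\kappa,j}$ with $j\ge 2$ change sign on $[0,\pi]$). Since the $\omega$-limit set is connected and both candidates are isolated equilibria, $u(x,t)\to U_\kappa$ or $u(x,t)\to 0$.

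For the second, quantitative part, assume $u_0\not\equiv 0$ and $E_\kappa(u_0)\le \tfrac{\pi}{2}$. The energy $E_\kappa(u(t))$ is nonincreasing, and $E_\kappa(0)=\tfrac{\pi}{2}$ while by Theorem \ref{thm1.2}(b)--(c) we have $E_\kappa(U_\kappa)=E_\kappa<\tfrac{\pi}{2}$ strictly. I claim that the flow cannot converge to $0$. If it did, then for large $t$ the solution would be small in $H^1$, hence in $L^\infty$, and $E_\kappa(u(t))\to E_\kappa(0)=\tfrac{\pi}{2}$ from below; but $0$ is a \emph{linearly unstable} equilibrium for $\kappa<1$ (the linearization $-\kappa^2\partial_{xx}-1$ restricted to the odd sector on $[0,\pi]$ has the negative eigenvalue $\kappa^2-1<0$ associated with $\sin x$), so combined with the fact that $u(\cdot,t)$ is a nonnegative nonzero perturbation, a Lyapunov--Schmidt / unstable-manifold argument shows the energy is pushed strictly \emph{below} $\tfrac{\pi}{2}$ for $t$ large once the trajectory enters any neighborhood of $0$ — more elementarily, one projects onto $\sin x$ and derives $\frac{d}{dt}\langle u,\sin x\rangle \ge (1-\kappa^2)\langle u,\sin x\rangle - C\|u\|_{L^\infty}^2\langle u,\sin x\rangle$, which prevents $\langle u,\sin x\rangle\to 0$. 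Hence the energy strictly decreases past $\tfrac{\pi}{2}$, contradicting convergence to $0$. Therefore $u(x,t)\to U_\kappa$.

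For the exponential rate, I would linearize around $U_\kappa$. The relevant operator is $\mathcal L_\kappa = -\kappa^2\partial_{xx} + (3U_\kappa^2-1)$ on $[0,\pi]$ with Dirichlet conditions (equivalently on the odd sector of $L^2(\mathbb T)$). Since $U_\kappa$ is the ground state, $\mathcal L_\kappa$ is a nonnegative operator; the key point is that on the \emph{odd} subspace its lowest eigenvalue is strictly positive. One sees this because $U_\kappa'$ is an eigenfunction with eigenvalue $0$ but $U_\kappa'$ is \emph{even} (it does not belong to the odd sector), and the next eigenfunction, which is odd, carries a strictly positive eigenvalue $\lambda_1(\kappa)>0$ by Sturm--Liouville oscillation theory (nodal counting: $U_\kappa'$ has exactly the nodes forced by periodicity, so the first odd eigenfunction sits above it). Writing $u = U_\kappa + w$ with $w$ odd, $w$ satisfies $\partial_t w = -\mathcal L_\kappa w + N(w)$ with $\|N(w)\|\lesssim \|w\|_{H^1}^2$ for $w$ small in $H^1$; since $w(t)\to 0$ in $H^1$ by the first part, standard energy estimates in $H^{s}$ using the spectral gap $\lambda_1(\kappa)$ give $\|w(t)\|_{H^{s}}\le C e^{-\lambda_1(\kappa) t/2}$ for any $s\ge 0$, in particular exponential convergence to $U_\kappa$.

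The main obstacle is the convergence \emph{dichotomy} and, within it, ruling out $u(x,t)\to 0$ under the energy hypothesis: the soft gradient-flow argument only gives convergence to \emph{some} equilibrium, and without the energy barrier $E_\kappa<\tfrac{\pi}{2}$ one cannot distinguish the two basins. Making the instability-of-$0$ mechanism rigorous — showing that a nonnegative nonzero trajectory whose energy starts at or below $\tfrac{\pi}{2}$ must have energy that strictly dips below $\tfrac{\pi}{2}$ and hence cannot limit to $0$ — is where the real work lies; the $\sin x$-projection differential inequality above is the cleanest route, but it requires care to absorb the cubic term uniformly and to handle the interplay between the $L^\infty$ bound, the parity, and the sign condition. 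The remaining steps (precompactness of the orbit, Sturm--Liouville nodal analysis giving the odd-sector spectral gap, and the linear-stability energy estimate) are essentially standard once the dichotomy is resolved.
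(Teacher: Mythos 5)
Your overall architecture matches the paper's: parity forces $u(0,t)=u(\pi,t)=0$, the maximum principle preserves the sign on $[0,\pi]$, energy dissipation plus the classification of Theorem \ref{thm1.1} pins the subsequential limit down to $\{0,U_\kappa\}$, and the odd-sector spectral gap (the paper's Lemma \ref{le4.spectrum}, proved there by a compactness argument combined with the Sturm comparison theorem rather than your nodal-counting sketch) yields the exponential rate. The genuine divergence is in how you rule out $u(t)\to 0$ under the hypothesis $E_\kappa(u_0)\le \pi/2$: you flag this as the hard step and propose a linear-instability mechanism via the differential inequality for $\langle u,\sin x\rangle$. That route does work --- since $u>0$ on $(0,\pi)$ by the strong maximum principle one has $\langle u,\sin x\rangle>0$, and once $\|u\|_\infty^2<1-\kappa^2$ this quantity grows, contradicting decay --- and in fact it needs only $u_0\not\equiv 0$ and the sign condition, not the energy bound at all, so it proves something slightly stronger. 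But the paper's argument is far shorter: $E_\kappa$ is nonincreasing and $E_\kappa(0)=\pi/2$, so $u(t_n)\to 0$ would force $E_\kappa(u_\infty)=E_\kappa(u_0)=\pi/2$, hence zero dissipation for all time, hence $u_0$ is itself an odd steady state, nonnegative on $[0,\pi]$, of energy $\pi/2$, i.e.\ $u_0\equiv 0$, a contradiction. You actually state the key fact ($E_\kappa(u(t))\to \pi/2$ from below along a nonincreasing energy bounded above by $\pi/2$) and then do not draw the immediate conclusion that the energy must be constant. One small caution for the last step: $\mathcal L_\kappa=-\kappa^2\partial_{xx}+3U_\kappa^2-1$ is \emph{not} nonnegative on all of $H^1(\mathbb T)$ (the even sector carries a negative direction, the translation mode $U_\kappa'$ being an even kernel element with interior zeros); the coercivity you need, and the one Lemma \ref{le4.spectrum} supplies, holds only on the odd subspace, which is where the dynamics lives.
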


To our best knowledge Theorem \ref{thm1.4} along with earlier results are the first sharp
quantitative convergence results on the Allen-Cahn equation. We plan to develop this
program on much more general phase field models in forthcoming works.

The rest of this paper is organized as follows. In Section 2 we introduce preliminary analysis of the steady states and examine in detail the profiles of the ground states. In Section 3
we prove Theorems \ref{thm1.1} and \ref{thm1.2}, give full classification of the steady states and analyze their profiles and energy monotonicity.
In Section 4 we study the convergence of the general parabolic Allen-Cahn equation \eqref{1.fac}, and prove Theorems \ref{thm1.3} and \ref{thm1.4}. In Section 5 we give concluding remarks.
The proof of Proposition 2.2 is given in the appendix.

\section{Classification of steady states}\label{sect2}
The solutions to $\kappa^2 u^{\prime\prime} +u-u^3=0$ are remarkably rigid, as documented
by the following ``patching" of nonlinear solutions.

\begin{prop}[Patching of nonlinear solutions via reflection]
\label{pr-sect2}
The following hold.
\begin{itemize}
\item \underline{Even reflection}. Suppose $\kappa>0$, and for some $\epsilon_0>0$ we have
\begin{align}
\kappa^2 u^{\prime\prime} +u -u^3 =0, \qquad \forall\, -\epsilon_0<x<0,
\end{align}
where $u \in C^2( (-\epsilon_0, 0) )$ and we assume $\lim\limits_{x\to 0-} u^{\prime}(x)=0$.
Define $u(x) =u(-x)$ for $0<x<\epsilon_0$. Then it holds that $u \in C^{\infty}((-\epsilon_0,\epsilon_0) )$ with $u^{\prime}(0)=0$ and solving the same equation on the
whole interval.

\item \underline{Odd reflection}. Suppose $\kappa>0$, and for some $\epsilon_0>0$ we have
\begin{align}
\kappa^2 u^{\prime\prime} +u -u^3 =0, \qquad \forall\,  0<x<\epsilon_0.
\end{align}
where $u \in C^2( (0, \epsilon_0) )$ and we assume $\lim\limits_{x\to 0+} u(x)=0$.
Define $u(x) =-u(-x)$ for $-\epsilon_0<x<0$. Then it holds that $u \in C^{\infty}((-\epsilon_0,\epsilon_0) )$ with $u(0)=0$ and solving the same equation on the
whole interval.
\end{itemize}
\end{prop}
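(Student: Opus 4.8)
The plan is to treat the two reflections in parallel: away from the origin the statement is just interior ODE regularity, and at the origin it is a gluing lemma. First note that on any open interval where $u\in C^2$ solves $\kappa^2u''+u-u^3=0$, the identity $u''=-\kappa^{-2}(u-u^3)$ shows the right-hand side is as regular as $u$, so a routine bootstrap gives $u\in C^\infty$ there; in particular the given solution is smooth on $(-\epsilon_0,0)$ (resp.\ on $(0,\epsilon_0)$), and once we know the reflected function is $C^2$ across $0$ and solves the equation, a second bootstrap upgrades it to $C^\infty$ on all of $(-\epsilon_0,\epsilon_0)$. The tool for the gluing is the elementary corollary of the mean value theorem: if $g$ is continuous at a point $p$, differentiable on a one-sided punctured neighbourhood of $p$, and $g'$ has a one-sided limit $L$ there, then $g$ has that one-sided derivative at $p$ equal to $L$, and $g'$ is one-sidedly continuous at $p$.

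\textbf{Even reflection.} First I would show that $a:=\lim_{x\to0-}u(x)$ exists: since $u'(x)\to0$, $u'$ is bounded on some $(-\delta,0)$, so $u$ is Lipschitz there and extends continuously to $0$. Then the equation forces $u''(x)\to-\kappa^{-2}(a-a^3)$ as $x\to0-$. Set $u(0)=a$ and $u(x)=u(-x)$ for $0<x<\epsilon_0$; then $u$ is continuous on $(-\epsilon_0,\epsilon_0)$, while for $x>0$ we have $u'(x)=-u'(-x)\to0$ and $u''(x)=u''(-x)\to-\kappa^{-2}(a-a^3)$ as $x\to0+$. Applying the MVT corollary first to $u$ and then to $u'$ yields $u\in C^2$ near $0$ with $u'(0)=0$ and $u''(0)=-\kappa^{-2}(a-a^3)$. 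The equation holds on $(-\epsilon_0,0)$ by hypothesis, on $(0,\epsilon_0)$ because it is invariant under $x\mapsto-x$ for an even $u$ (both $u$ and $u''$ are even), and at $x=0$ by the computed value of $u''(0)$. A final bootstrap gives $u\in C^\infty$.

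\textbf{Odd reflection.} Here the extra hypothesis is $u(x)\to0$ as $x\to0+$, which via the equation immediately gives $u''(x)\to0$; hence $u''$ is bounded near $0$, $u'$ is Lipschitz there, and $b:=\lim_{x\to0+}u'(x)$ exists. Set $u(0)=0$ and $u(x)=-u(-x)$ for $-\epsilon_0<x<0$; then $u$ is continuous, and for $x<0$ we have $u'(x)=u'(-x)\to b$ and $u''(x)=-u''(-x)\to0$ as $x\to0-$. The same two applications of the MVT corollary give $u\in C^2$ near $0$ with $u'(0)=b$ and $u''(0)=0$. The crucial point is that the equation is preserved by odd reflection precisely because $f(u)=u^3-u$ is odd: for $x<0$, $\kappa^2u''(x)+u(x)-u(x)^3=-\bigl(\kappa^2u''(-x)+u(-x)-u(-x)^3\bigr)=0$, and at $x=0$ one has $\kappa^2\cdot0+0-0=0$. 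Bootstrapping again yields $u\in C^\infty$.

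The only step requiring any care --- and the one I would regard as the main (if mild) obstacle --- is establishing that the relevant one-sided limit exists ($u$ in the even case, $u'$ in the odd case) so that the reflection is genuinely $C^2$ across $0$ and not merely $C^2$ on each half; this follows from the boundedness arguments above, or alternatively from the first integral $\tfrac{\kappa^2}{2}(u')^2+\tfrac12u^2-\tfrac14u^4\equiv\text{const}$ (obtained by multiplying the equation by $u'$), which pins down the behaviour of $u$ and $u'$ near the endpoint. Everything else is bookkeeping.
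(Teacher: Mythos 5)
Your proposal is correct and follows essentially the same route as the paper's proof: establish that the one-sided limits of $u$ and its derivatives exist at $0$ (via boundedness coming from the hypothesis and the equation), check that the reflected function is $C^2$ across $0$ and satisfies the equation there, and then bootstrap to $C^\infty$ using $u''=-\kappa^{-2}(u-u^3)$. Your write-up simply supplies more detail (the MVT gluing lemma and the explicit verification that even/odd reflection preserves the equation because $f$ is odd) than the paper's brief sketch.
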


\begin{figure}[!h]
\centering
\includegraphics[trim={1in 0.5in 0.6in 0.3in},clip,width=.9\textwidth]{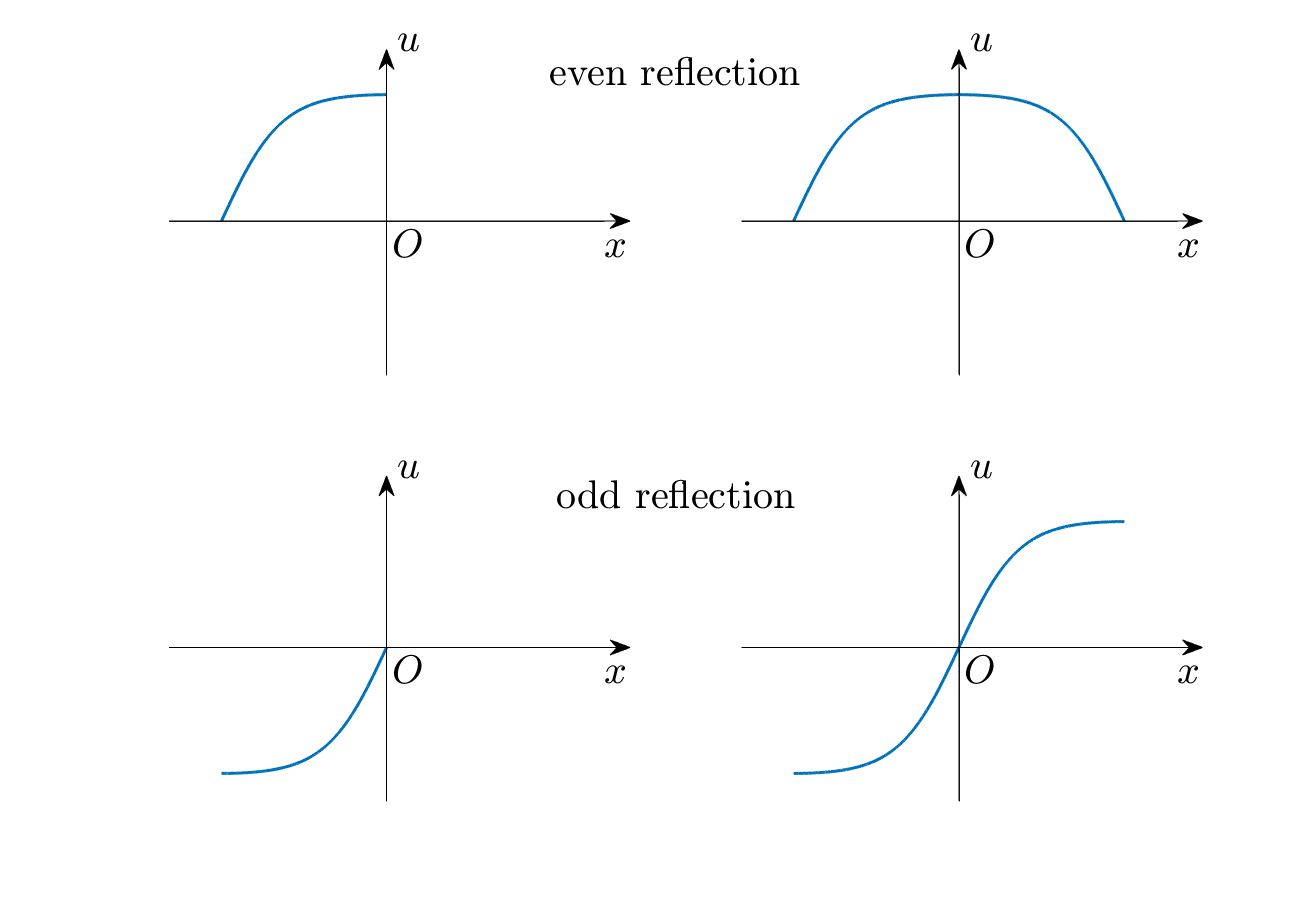}
\caption{Even reflection and odd reflection.}\label{fig:oddeven}
\end{figure}

\begin{rem}
\label{re-2.1}
{ Proposition \ref{pr-sect2} shows that the solution is remarkably rigid. If we know the profile of $u$ on some interval $(a,b)$ with $u(a)=0,~u'(b)=0$. Then the  solution can be uniquely determined on
a larger interval.}
\end{rem}

\begin{proof}
We shall only prove the first case as the second case is similar. First it is not difficult to that $u$ has
bounded derivatives in $[-\epsilon_0/2, 0)$ which can be extended to $0$ from the left. The
extended $u$ satisfies the equation on $(-\epsilon_0, 0) \cap (0,\epsilon_0)$. Furthermore the
equation also holds at $x=0$ up to third order derivatives. Then we can bootstrap the regularity
of $u$ by using the equation and conclude that $u \in C^{\infty}$.
\end{proof}

Observe that for $u_0(x) =\sin x$, since $f(u)=u^3- u$,  we have
\begin{align}
u(x,t) =\sum_{\text{$m\ge 1$: {$m$ is odd}}} c_m(t) \sin m x.
\end{align}
In particular it follows that the corresponding steady state $u_{\infty}$ is odd.
If $2\pi$ is the minimal period (such solution is actually the odd zero-up ground state up to a reflection if necessary, see Definition \ref{def3.2}),
then $u_{\infty}(0) = u_{\infty}^{\prime}(\frac {\pi} 2)=0.$
In addition, $u_\infty$ satisfies the steady state equation
\begin{align} \label{2.steady}
\ka^2 u^{\prime\prime} - f (u) =0\quad\mbox{on}\quad  \mathbb T.
\end{align}
We may look for the steady state such that it is monotonically increasing
on $[0, \frac {\pi }2]$ with $u_{\infty}(0) = u_{\infty}^{\prime}(\frac {\pi} 2)=0$.  Effectively
by using reflection symmetry, the whole graph of $u_{\infty}$ will be determined by its
graph on the interval $[0,\frac {\pi}2]$.

To simplify the notation we now write $u= u_{\infty}$ as the desired steady state.
We consider the regime $0<\ka <1$ (for simplicity we
suppress the notational dependence of $u$ on $\ka$). Denote $u(\frac {\pi}2) = N<1$ and observe that we should have $N\to 1$ as $\ka \to 0$.
Multiplying $\eqref{2.steady}$ by $u'$ and using $u^{\prime}(\frac {\pi} 2) =0$, we have
\begin{align}
(u^{\prime})^2 = \frac 1 { 2 {\ka^2} }
( (u^2-1)^2 - (N^2-1)^2).
\end{align}
If $u$ is monotonically increasing, it satisfies
\begin{align}
u^{\prime}(x) = \frac 1 {\sqrt 2 {\ka} }
\sqrt{ (u^2-1)^2 - (N^2-1)^2},
\end{align}
with $u(0)=0$, $u(\frac {\pi} 2) =N$.
We then obtain
\begin{align}
\int_0^N \frac 1 { \sqrt{ (u^2-1)^2 - (1- N^2)^2} } du = \frac {\pi} {2\sqrt 2 \ka}.
\end{align}
For each fixed $0<\ka < 1$, there exists a unique $0<N=N(\ka) <1$ such that the above
identity holds. Furthermore one can determine the dependence of $N$ on $\ka$. Indeed
by a change of variable $u \to N \sin \theta $, the left-hand side of the above equation is denoted by
\begin{equation}\label{def:gN}
\begin{aligned}
g(N) &:= \int_0^N \frac 1 { \sqrt{ (u^2-1)^2 - (1- N^2)^2} } du  = \int_0^{\frac{\pi}2} \frac 1 {\sqrt{ 2 -N^2(1+\sin^2 \theta) } } d\theta.
\end{aligned}
\end{equation}
Here we note that $g$ is monotonically increasing on $[0,1)$, $g(0) = \frac {\pi} {2\sqrt 2}$ and $g(1) =\infty$.
In particular we see the necessity of $\ka <1$!
Otherwise if $\ka\geq 1$, the equation \eqref{2.steady} admits the trivial solution $u\equiv0$.
For $0<\ka \ll 1$, it is not difficult to check that
\begin{align}
1-N(\ka)=  O(e^{-\frac c  {\ka} } ).
\end{align}
Sharper asymptotics can certainly be derived.

We summarize the above discussion as the following proposition.
\begin{prop}[Characterization of a special steady state for $0<\ka<1$] \label{sdy1}
The following hold:

\begin{enumerate}
\item
The function $g$ defined in \eqref{def:gN} is monotonically increasing on $[0,1)$, $g (0) = \frac {\pi} {2\sqrt 2}$ and $g(N) \to\infty$ as $N\to 1$.

\item For any $0<\ka <1$, there exists a unique $0<N_{\ka} <1$ such that
\begin{align}\label{eq:gN_ka}
g(N_\ka) = \frac {\pi} {2\sqrt 2 \ka}.
\end{align}
Furthermore we have
\begin{align} \label{sdy1.200a}
 c_1 e^{-\frac {c_2} {\ka}} <1- N_{\ka} < c_3 e^{-\frac {c_4} {\ka} },
 \end{align}
 where $c_i>0$, $i=1,\cdots,4$ are absolute constants.

\item For any $0<\ka<1$, there exists a $2\pi$-periodic $C^{\infty}$ odd function $u_{\ka}$ such that
\begin{itemize}
\item $u_{\ka}$ is a steady state, i.e. $\ka^2 u_{\ka}^{\prime\prime}
-f(u_{\ka}) =0$.
\item $u_{\ka}(0)=u_{\ka}^{\prime}(\frac {\pi}2)=0$, $u_{\ka}(\frac {\pi}2)=N_{\ka}$, and $u_{\ka}$ is monotonically increasing on $[0,\frac {\pi}2]$.
\item $u_{\ka}(\pi-x) =u_{\ka}(x)$ for $\frac {\pi}2 \le x \le \pi$.
\end{itemize}
Moreover for $0<\ka\ll 1$, we have
\begin{align} \label{sdy1.2}
0\le  \tanh\left( \frac x {\sqrt 2 \ka } \right)- u_{\ka}(x)\le  \exp(-\frac {c_5} {\ka} ),
\qquad\forall\, 0\le x\le \frac {\pi}2,
\end{align}
where $c_5>0$ is an absolute constant.
\end{enumerate}
\end{prop}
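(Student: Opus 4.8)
The proposition splits into three tasks: the elementary analysis of $g$ in part (1), the quantitative behaviour of $N_\ka$ in part (2), and a reflection-and-comparison construction of $u_\ka$ in part (3). For part (1) I would work with the trigonometric form $g(N)=\int_0^{\pi/2}\big(2-N^2(1+\sin^2\theta)\big)^{-1/2}\,d\theta$ already obtained in \eqref{def:gN}. Since $1+\sin^2\theta\ge1$, for each fixed $\theta$ the integrand is smooth and strictly increasing in $N\in[0,1)$, and on any $[0,1-\eta]$ the denominator is bounded below by $\sqrt{2\eta-\eta^2}>0$; differentiation under the integral sign then gives $g\in C^1([0,1))$ strictly increasing. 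Setting $N=0$ yields $g(0)=\tfrac12\pi/\sqrt2$, and the substitution $\delta=1-N^2$, which turns the denominator into $\cos^2\theta+\delta(1+\sin^2\theta)$, exhibits the logarithmic singularity at $\theta=\pi/2$ and hence $g(N)\to\infty$ as $N\to1$.

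For part (2), existence and uniqueness of $N_\ka$ are immediate: by part (1), $g$ maps $[0,1)$ bijectively and increasingly onto $[\tfrac12\pi/\sqrt2,\infty)$, and $\tfrac12\pi/(\sqrt2\ka)>g(0)$ precisely when $\ka<1$. For the two-sided bound \eqref{sdy1.200a} I would again set $\delta=1-N_\ka^2$ and sandwich $g(N_\ka)$ using the elementary inequalities $\tfrac2\pi(\tfrac\pi2-\theta)\le\cos\theta\le\tfrac\pi2-\theta$ on $[0,\pi/2]$: from $\cos^2\theta+\delta(1+\sin^2\theta)\le(\tfrac\pi2-\theta)^2+2\delta$ one gets $g(N_\ka)\ge\int_0^{\pi/2}(\phi^2+2\delta)^{-1/2}d\phi\ge\tfrac12\log(1/\delta)$, while from $\cos^2\theta+\delta(1+\sin^2\theta)\ge\tfrac4{\pi^2}(\tfrac\pi2-\theta)^2+\delta$ one gets $g(N_\ka)\le\tfrac\pi2\int_0^{\pi/2}(\phi^2+\tfrac{\pi^2}4\delta)^{-1/2}d\phi\le\tfrac\pi4\log(1/\delta)+C$. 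Plugging in $g(N_\ka)=\tfrac12\pi/(\sqrt2\ka)$ and solving for $\delta$ gives $e^{-\pi/(\sqrt2\ka)}\le\delta\le c\,e^{-\sqrt2/\ka}$, and \eqref{sdy1.200a} follows from $\tfrac12\delta\le1-N_\ka\le\delta$.

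For part (3) I would first build $u_\ka$ on $[0,\pi/2]$ as the solution of the first-order problem $u'=\tfrac1{\sqrt2\ka}\sqrt{(1-u^2)^2-(1-N_\ka^2)^2}$, $u(0)=0$: the right-hand side is smooth and strictly positive for $u\in[0,N_\ka)$, so the solution exists, is unique and increasing, and by separation of variables together with \eqref{eq:gN_ka} it reaches $u=N_\ka$ exactly at $x=\sqrt2\ka\,g(N_\ka)=\pi/2$, where $u_\ka'$ vanishes. Differentiating $(u_\ka')^2=\tfrac1{2\ka^2}\big((1-u_\ka^2)^2-(1-N_\ka^2)^2\big)$ and dividing by $u_\ka'>0$ shows $\ka^2u_\ka''-f(u_\ka)=0$ on $(0,\pi/2)$. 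I then extend $u_\ka$ by even reflection about $x=\pi/2$ and by odd reflection about $x=0$; since $u_\ka'(\pi/2)=0$ and $u_\ka(0)=0$, Proposition \ref{pr-sect2} upgrades the extension to a $C^\infty$ solution on $(-\pi,\pi)$, and its $2\pi$-periodic continuation, being odd and vanishing at $x=\pi$, is $C^\infty$ across $x=\pm\pi$ by the odd-reflection part of Proposition \ref{pr-sect2}; all the listed properties of $u_\ka$ then hold. For the $\tanh$ estimate \eqref{sdy1.2}, put $v(x)=\tanh(x/(\sqrt2\ka))$, which solves the same ODE with $(v')^2=\tfrac1{2\ka^2}(1-v^2)^2$, and note that $w:=v-u_\ka$ satisfies the linear identity $w'=-\tfrac{u_\ka+v}{\sqrt2\ka}w+\rho$ on $[0,\pi/2]$ with $w(0)=0$, where $\rho:=\tfrac1{\sqrt2\ka}\big((1-u_\ka^2)-\sqrt{(1-u_\ka^2)^2-(1-N_\ka^2)^2}\big)\ge0$; since $u_\ka+v\ge0$, variation of constants gives $0\le w(x)\le\int_0^{\pi/2}\rho$. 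Finally, with $a=1-u_\ka^2\ge1-N_\ka^2=:b$ one has $a-\sqrt{a^2-b^2}=b^2/(a+\sqrt{a^2-b^2})\le b$, so $\rho\le(1-N_\ka^2)/(\sqrt2\ka)$ and hence $w(x)\le\tfrac{\pi(1-N_\ka^2)}{2\sqrt2\ka}\lesssim\ka^{-1}e^{-\sqrt2/\ka}\le e^{-c_5/\ka}$ for $0<\ka\ll1$, which is \eqref{sdy1.2}, while $w\ge0$ gives its left-hand inequality.

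I expect the routine parts to be the three reflections (given Proposition \ref{pr-sect2}) and the existence/uniqueness of $N_\ka$. The genuinely delicate points are the two-sided exponential control of $1-N_\ka$, which hinges on capturing the logarithmic divergence of $g$ from both sides via the right elementary bounds on $\cos\theta$, and, above all, upgrading the endpoint-type smallness to the uniform-in-$x$ estimate \eqref{sdy1.2}: the linear identity for $w=v-u_\ka$ together with its bounded integrating factor is exactly what makes the smallness of the forcing $\rho$ propagate across the whole interval $[0,\pi/2]$.
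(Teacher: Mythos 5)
Your proposal is correct, and parts (1)--(2) follow essentially the paper's own route: differentiation under the integral sign for monotonicity, and a two-sided capture of the logarithmic divergence of $g$ near $N=1$ (the paper uses $\frac{2}{\pi}\theta<\sin\theta<\theta$ on the variable $\theta$ after writing the denominator as $2-2N^2+N^2\sin^2\theta$, while you use the equivalent bounds on $\cos\theta$ after setting $\delta=1-N^2$; both give $1-N_\ka$ pinched between two exponentials). The only small omission there is the non-asymptotic regime: to get \emph{absolute} constants valid for all $0<\ka<1$ one must dispose of the case where $N_\ka$ is bounded away from $1$ (the paper splits at $N_\ka=\sqrt{2/3}$ and notes the bound holds trivially there after adjusting constants); this is cosmetic. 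Part (3)'s construction by separation of variables plus even/odd reflection is also the paper's argument. Where you genuinely diverge is the proof of \eqref{sdy1.2}: the paper proves the two inequalities separately --- the sign $u_\ka\le\tanh(x/(\sqrt2\ka))$ via the strict differential inequality $\eta'<-\frac{1}{\sqrt2\ka}(u_\ka+\theta)\eta$, and then the quantitative lower bound via a two-region argument (a small interval $[0,\delta_0]$ on which $1-N_\ka^2\ll 1-\theta(x)^2$, followed by a monotonicity comparison on $[\delta_0,\pi/2]$). You instead write the \emph{exact} identity $w'=-\frac{u_\ka+\theta}{\sqrt2\ka}\,w+\rho$ for $w=\theta-u_\ka$, with the forcing $\rho=\frac{1}{\sqrt2\ka}\bigl((1-u_\ka^2)-\sqrt{(1-u_\ka^2)^2-(1-N_\ka^2)^2}\bigr)\ge 0$ controlled pointwise by $(1-N_\ka^2)/(\sqrt2\ka)$ via $a-\sqrt{a^2-b^2}\le b$. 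Variation of constants with the bounded, sign-definite integrating factor then delivers $0\le w\le\frac{\pi}{2}\sup\rho$ in one stroke, giving both sides of \eqref{sdy1.2} simultaneously. This is cleaner and arguably sharper in structure than the paper's argument, at the cost of no new generality; the paper's version has the minor advantage of not needing the exact algebraic identity for $\rho$.
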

\begin{proof}
1. For fixed $N\in[0,1)$, taking the derivative of $g(N)$ with respect to $N$, we have
\begin{equation}
g'(N)=\int_0^{\frac\pi2}\frac{N(1+\sin^2\theta)}{(2-N^2(1+\sin^2\theta))^{\frac32}}d\theta.
\end{equation}	
It is not difficult to verify that both the numerator and denomenator are positive in $(0,\frac\pi2)$, therefore, we have shown that $g'(N)>0$ and it proves that $g$ is monotonically increasing for $N\in[0,1)$. When $N=0$, we have $g(0)=\int_0^{\frac\pi2}\frac{1}{\sqrt{2}}d\theta=\frac{\pi}{2\sqrt{2}}$. While as $N$ is close to $1$, we have
\begin{equation}
\label{ineq:gN}
\begin{aligned}
g(N) & = \int_0^{\frac \pi 2} \frac{1}{\sqrt{2-2N^2+N^2\cos^2\theta}} d\theta= \int_0^{\frac \pi 2} \frac{1}{\sqrt{2-2N^2+N^2\sin^2\theta}} d\theta \\
& > \int_0^1 \frac{1}{\sqrt{2-2N^2+N^2\theta^2}} d\theta
> \frac{1}{N}\int_\delta^1 \frac{1}{\sqrt{\delta^2 + \theta^2}} d\theta \\
&> \frac{1}{N}\int_\delta^1 \frac{1}{\sqrt{2}\theta} d\theta = -\frac{1}{\sqrt{2}N}\log \delta,
\end{aligned}	
\end{equation}
where
\begin{equation}
\delta = \sqrt{\frac{2-2N^2}{N^2}}\to 0,\quad \mbox{as } N\to 1.
\end{equation}
Here, it is assumed implicitly that $N> \sqrt{\frac 2 3}$ so that $\delta <1$.
Then, it is easy to see that $g(N)\to \infty$ as $N\to 1$.
\medskip

2.  The existence and uniqueness of $N_{\ka}$ follows easily from the behavior of the function
$g(\cdot)$.
Now we shall show the upper and lower bounds on $N_{\ka}$. It is clear from Step 1 that $N_{\ka}\to 1$ as $\ka\to 0$. If $N_{\ka}\le \sqrt{\frac 23}$, then $\ka$ is
bounded away from zero by an absolute constant and the desired estimate
\eqref{sdy1.200a} clearly holds in this case. Thus we only need to consider the
situation $N_{\ka}>\sqrt{\frac 23}$. To ease the notation
we denote $N=N_{\ka}$ with $N>\sqrt{\frac 2 3}$.
From \eqref{ineq:gN}, we have
\begin{equation}
\frac{\pi}{2 \sqrt 2 \ka} = g(N) > -\frac{1}{\sqrt{2}N}\log \delta,
\end{equation}
which yields directly
\begin{equation}
1-N> \frac{N^2}{2+2N} e^{-\frac{N\pi}{\ka}}.
\end{equation}
On the other hand, using $\sin\theta>\frac 2 \pi \theta$, we have
\begin{equation}
\begin{aligned}
\frac{\pi}{2 \sqrt 2 \ka} & = g(N) = \int_0^{\frac \pi 2} \frac{1}{\sqrt{2-2N^2+N^2\sin^2\theta}} d\theta < \int_0^{\frac \pi 2} \frac{1}{\sqrt{2-2N^2+N^2\left(\frac 2 \pi\theta\right)^2}} d\theta \\
& = \int_0^{1} \frac{\pi}{2N\sqrt{\delta^2+\tilde\theta^2}} d\tilde\theta = \int_0^{\delta} \frac{\pi}{2N\sqrt{\delta^2+\tilde\theta^2}} d\tilde\theta+  \int_\delta^{1} \frac{\pi}{2N\sqrt{\delta^2+\tilde\theta^2}} d\tilde\theta\\
&< \frac{\pi}{2N} - \frac{\pi}{2\sqrt{2}N} \log\delta,
\end{aligned}	
\end{equation}
which yields
\begin{equation}
1-N < \frac{N^2}{2+2N} e^{2\sqrt 2-\frac{2N}{\ka}}.
\end{equation}
Therefore, when $N> \sqrt{\frac 2 3}$, we have
\begin{equation}\label{ineq:N}
\frac 1 {3 + \sqrt 6} e^{-\frac \pi \ka} <\frac{N^2}{2+2N} e^{-\frac{N\pi}{\ka}}< 1-N < \frac{N^2}{2+2N} e^{2\sqrt 2-\frac{2N}{\ka}} < \frac{e^{2\sqrt 2}}{4} e^{-\frac{2\sqrt 2}{\sqrt 3\ka}}.
\end{equation}

3. Fix $0<\ka <1$ and  consider
the function
\begin{align}
h(u)= \int_0^u  \frac {\sqrt 2 {\ka} } {\sqrt{ (y^2-1)^2- (1-N^2 )^2}}dy,
\quad 0\le u\le N.
\end{align}
Clearly $h:\, [0,N ]\to [0,\frac {\pi}2]$ is strictly monotonically increasing and bijective.
The inverse map of $h(u)$ then defines the desired function $u_{\ka}$ on the interval $[0,\frac {\pi}2]$. {It is known that
$u(\frac {\pi}2) =N $  and $u^{\prime}(\frac {\pi} 2) =0$, by Proposition \ref{pr-sect2} we derive that $u$ is even respect to $x=\frac\pi2,$ i.e., $u(x)=u(\pi-x)$ for $x\in\left(0,\frac\pi2\right).$}

Finally to show \eqref{sdy1.2}, we denote $\theta(x) = \tanh(\frac x {\sqrt 2 \ka} )$. Clearly
\begin{equation}
\left\{
\begin{aligned}
&\frac {du_{\ka} } {dx} = \frac 1 {\sqrt 2 \ka}
\sqrt{ (u^2_{\ka}-1)^2 - (N^2-1)^2},   \\
&\frac {d\theta} {dx} =  \frac 1 {\sqrt 2 \ka} (1-\theta^2),  \\
&u_{\ka}(0) = \theta(0)=0,
\end{aligned}
\right.
\end{equation}
Observe that
\begin{align}
\frac {du_{\ka} } {dx} < \frac 1 {\sqrt 2 \ka} (1-u_{\ka}^2).
\end{align}
Denote $\eta(x) = u_{\ka}(x) - \theta(x)$. Clearly $\eta(0)=0$ and
\begin{align}
\eta^{\prime} < -\frac 1 {\sqrt 2 \ka} (u_{\ka}+\theta) \eta.
\end{align}
{This implies that $\eta(x) \le 0$ for all $0\le x \le \frac {\pi}2$.}
Thus
\begin{align}
u_{\ka} (x) -\theta(x) \le 0, \qquad \forall\, 0\le x \le \frac {\pi}2.
\end{align}
We now show the lower bound. By \eqref{sdy1.200a} we can choose
an absolute constant $\delta_0>0$ sufficiently small such that
\begin{align}
1-N^2 <c_3 e^{-\frac {c_4} {\ka}}< \frac 1 {100} (1-\theta(x)^2),
\qquad\forall\, 0\le x \le \delta_0.
\end{align}
This implies
\begin{align}
\sqrt{ (1-u_\ka(x)^2)^2 - (1-N^2)^2 }  +1-\theta(x)^2 \ge
C ( 1-u_\ka(x)^2 + 1- \theta(x)^2),
\end{align}
for $x\in[0,\delta_0]$ and some generic constant $C>0$. Now observe that
\begin{align}
\frac d {dx} \eta = \frac 1 {\sqrt 2 \ka}
 \frac {(1-u_\ka^2)^2 - (1-\theta^2)^2 - ( 1-N^2)^2}
{ \sqrt{(1-u_\ka^2)^2-(1-N^2)^2} + 1-\theta^2}.
\end{align}

Thus for $0<x\le \delta_0$, we have
\begin{align}
\frac {d\eta} {dx} =O(\ka^{-1}) \cdot \eta + O(e^{-\frac {b_0} {\ka} }),
\end{align}
where $b_0>0$ is an absolute constant. Then for $0<x \le b_1$ ($b_1>0$ is
a sufficiently small absolute constant), we have
\begin{align}
\sup_{0\le x \le b_1}| \eta(x)| \le e^{-\frac {b_2} {\ka} },
\end{align}
where $b_2>0$ is an absolute constant.  Thus for $0\le x \le b_1$, we have
\begin{align}
u_{\ka}(x) - \theta(x) \ge -e^{-\frac {b_2} {\ka} }.
\end{align}
For $b_1 \le x \le \frac {\pi}2$, by using monotonicity, we have
\begin{align}
u_{\ka}(x) - \theta(x) & \ge u_{\ka}(b_1) - \theta(\frac {\pi}2)
=u_{\ka}(b_1)-\theta(b_1) + \theta(b_1) -\theta(\frac {\pi}2) \notag \\
& \ge  -e^{-\frac {\tilde c_5} {\ka}},
\end{align}
where $\tilde c_5$ is an absolute constant. The desired result then follows easily by
collecting the estimates.
\end{proof}

A periodic steady state on $\mathbb T$ can be extended naturally to the whole space $\mathbb R$.
Therefore, such periodic steady states can be seen as a special solution to the following steady state equation defined in $\mathbb R$
\begin{equation}
\label{a.ac}
\ka^2 u''+u-u^3=0,\quad x\in\mathbb{R}.
\end{equation}
Multiplying $\eqref{a.ac}$ by $u'$, we derive that
\begin{equation}
\label{a.id}
\ka^2 (u')^2+u^2-\frac12u^4~\mbox{is a constant, denoted by}~C,
\end{equation}
which can be rewritten as
\begin{equation}
\label{a.id1}
\ka^2(u')^2=\frac12(u^2-1)^2+C-\frac12.
\end{equation}
Concerning the solution of \eqref{a.ac}, we have the following  result.
\begin{prop}
\label{pra.1}
Let $u$	be a bounded solution to \eqref{a.ac} and $C$ be the constant defined in \eqref{a.id}, then the following hold.
\begin{enumerate}
	\item [(1)] If $C>\frac12$, $u$ can not exist.
	\item [(2)] If $C=\frac12$, then $u= \pm\tanh\frac{x+c}{\sqrt{2}\ka}$ or $u\equiv \pm 1$.
	\item [(3)] If $0<C<\frac12$, then $u$ is a periodic function and $|u|<1$.
	\item [(4)] If $C=0$, then $u\equiv0.$
\end{enumerate}	
\end{prop}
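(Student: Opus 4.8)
The plan is to analyze the first-order ODE identity \eqref{a.id1}, namely $\ka^2 (u')^2 = \tfrac12 (u^2-1)^2 + C - \tfrac12$, treating the right-hand side as a function of $u$ and using boundedness of $u$ to constrain the allowable range of $u$ and the value of $C$. First I would set $P(s) = \tfrac12(s^2-1)^2 + C - \tfrac12$ for $s\in\mathbb R$, so that $\ka^2(u')^2 = P(u)\ge 0$ wherever $u$ is defined. Since $u$ is bounded, its range is contained in some compact interval, and on the closure of that range we must have $P(u)\ge 0$.

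For part (1), if $C>\tfrac12$ then $P(s)\ge C-\tfrac12>0$ for all $s$, so $|u'|\ge c>0$ everywhere; integrating forces $u$ to be unbounded (it is strictly monotone with derivative bounded below in absolute value), a contradiction. For part (4), if $C=0$ then $P(s) = \tfrac12 (s^2-1)^2 - \tfrac12 = \tfrac12 s^2(s^2-2)$, which is negative for $0<|s|<\sqrt 2$ and vanishes at $s=0$ and $s=\pm\sqrt 2$; since $P(u)\ge 0$ and $P<0$ on a punctured neighborhood of $0$, the range of $u$ cannot meet $(0,\sqrt 2)\cup(-\sqrt 2,0)$, so $u$ is confined to $\{0\}\cup\{|s|\ge\sqrt2\}$, and by continuity $u\equiv 0$ or $u$ is constant equal to $\pm\sqrt2$; but plugging a nonzero constant into \eqref{a.ac} gives $u-u^3=0$, i.e. $u\in\{0,\pm1\}$, ruling out $\pm\sqrt2$, so $u\equiv 0$. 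For part (2), if $C=\tfrac12$ then $\ka^2(u')^2 = \tfrac12(u^2-1)^2$, so $u'=\pm\tfrac1{\sqrt2\ka}(1-u^2)$ on any interval where $|u|<1$ (the sign being locally constant); this is the standard separable ODE whose solutions are $u = \pm\tanh\frac{x+c}{\sqrt2\ka}$, together with the equilibria $u\equiv\pm1$. One must check that solutions cannot cross $|u|=1$: if $u(x_0)=1$ with $u$ nonconstant, then near $x_0$, $(u')^2 = \tfrac1{2\ka^2}(u^2-1)^2$ forces $u$ to approach $1$ monotonically only as $x\to\pm\infty$, so no finite-time crossing occurs — this is where a short uniqueness/qualitative argument is needed. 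For part (3), if $0<C<\tfrac12$ then $P(\pm1) = C-\tfrac12<0$, so the range of $u$ avoids a neighborhood of $\pm1$; meanwhile $P(0) = C>0$. Writing $a\in(0,1)$ for the smallest positive root of $P$, one sees $P>0$ on $(-a,a)$ and $P<0$ just outside, so $u$ oscillates within $[-a,a]$, turning around exactly at $u=\pm a$ where $u'=0$; standard phase-plane analysis of the conservative system (the level set $\{\ka^2 v^2 + u^2 - \tfrac12 u^4 = C\}$ in the $(u,u')$ plane is a closed curve encircling the origin) shows the orbit is periodic, and $|u|\le a<1$ gives $|u|<1$.

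The main obstacle is the careful treatment of the boundary cases where $P(u)$ vanishes — ensuring that nonconstant solutions do not reach the zeros of $P$ in finite $x$ (parts (2) and the turning points in (3)) and correctly matching the sign of $u'$ across turning points. This requires invoking local uniqueness for the ODE \eqref{a.ac} (it is smooth, so standard) and a sign-continuation argument: on the (open, dense) set where $P(u)\ne 0$ one has $u' = \pm\sqrt{P(u)}/\ka$ with locally constant sign, and at an isolated zero $s_0$ of $P$ with $P'(s_0)\ne 0$ the solution either sits at the equilibrium (if $f(s_0)=0$) or passes through with $u''\ne 0$, reversing the sign of $u'$; since $P'(\pm1)\ne 0$ and $\pm1$ are the only equilibria with $f=0$ besides $0$, the analysis closes. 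I would also invoke \eqref{a.id} directly to exclude spurious constants (a constant $u\equiv c$ solving \eqref{a.ac} forces $c\in\{0,\pm1\}$ and then $C = c^2-\tfrac12c^4\in\{0,\tfrac12\}$), which pins down the constant solutions in each case.
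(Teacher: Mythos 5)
Your proposal is correct and follows essentially the same route as the paper's Appendix A: analyze the first integral $\ka^2(u')^2=\tfrac12(u^2-1)^2+C-\tfrac12$, constrain the admissible range of $u$ by the sign of the right-hand side, rule out the outer branches via the monotonicity/unboundedness argument, integrate explicitly when $C=\tfrac12$, and conclude periodicity for $0<C<\tfrac12$ (the paper does this last step by locating turning points and reflecting, whereas you invoke the closed level curve in the phase plane — an equivalent standard argument). The only point to tighten is case (4): confinement of the range to $\{0\}\cup\{|s|\ge\sqrt2\}$ plus continuity does not immediately force $u$ to be constant on the outer branch; you still need the same unboundedness argument as in cases (1) and (3) to exclude a non-constant solution with $|u|\ge\sqrt2$, which is exactly what the paper does.
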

\begin{proof}
Although the above conclusion is a folklore, we provide the proof for the sake of
completeness in Appendix \ref{append1} and a graphical illustration in Figure \ref{fig:class}.
In the case of $0<C<\frac 1 2$, an odd periodic steady state which has its period precisely given
by $2\pi$ is characterized  by Proposition \ref{sdy1}.
\end{proof}

\begin{figure}[!h]
\centering
\includegraphics[trim={1in 0.5in 0.6in 0.5in},clip,width=1\textwidth]{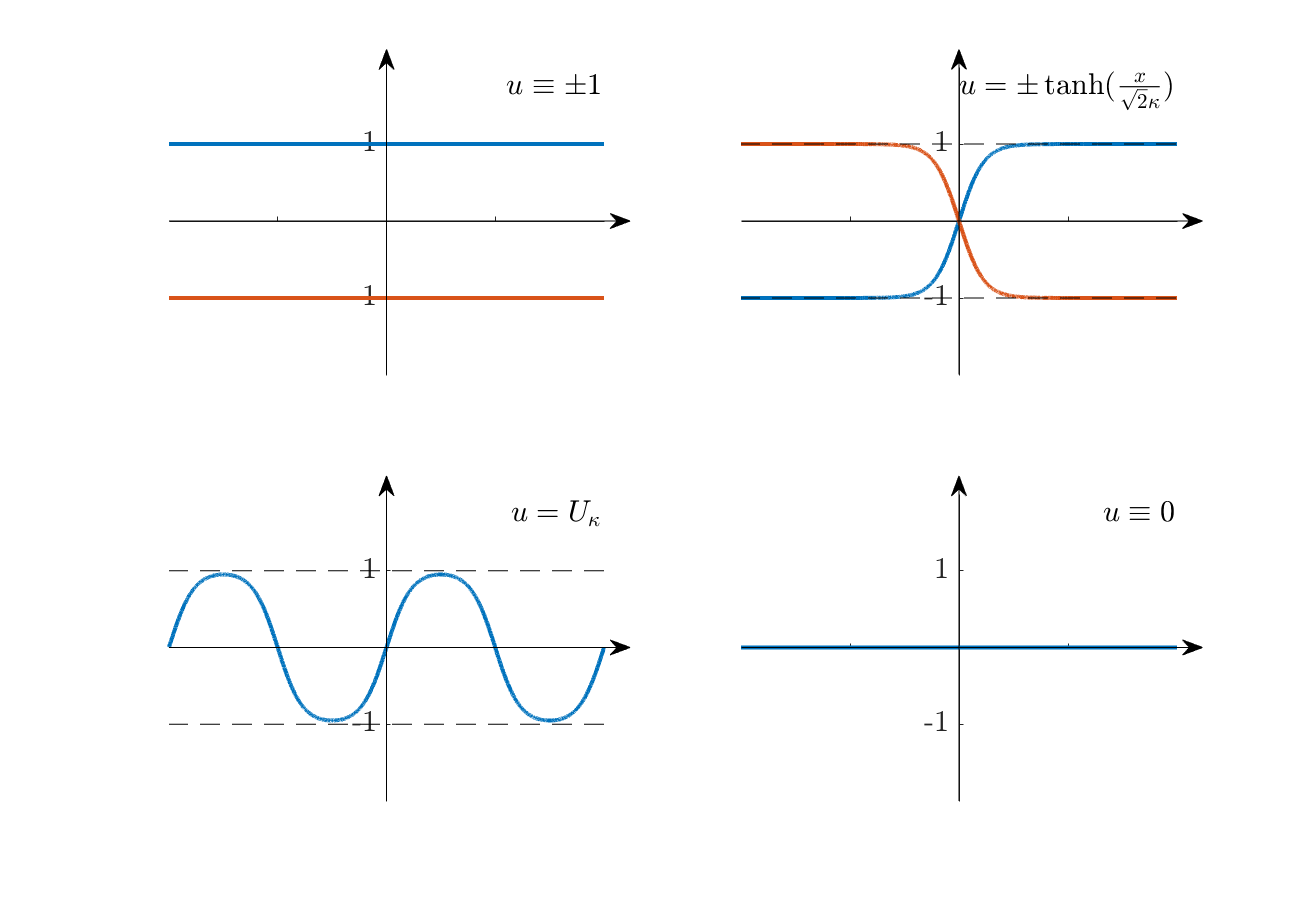}
\caption{Classification of bounded steady states of Allen-Cahn ($\ka = \frac12$).}\label{fig:class}
\end{figure}

\section{Classification of steady state energy}
In this section, we consider the energy $E_\ka(u)$ (see \eqref{1.energy-2}) of solutions to \eqref{1.eac}. From the discussion of Section 2, we see that a nontrivial bounded steady state is a $2\pi$-periodic function. Therefore, we focus on the following problem
\begin{equation}
\label{3.ac}	
\kappa^2u''+u-u^3=0,\quad u(x)=u(x+2\pi)~\mbox{for}~x\in\mathbb{R}.
\end{equation}
For simplicity of presentation, we introduce the following definition.
\begin{defn}[Odd zero-up solution]
\label{def3.1}
We shall say that $u$ is an odd zero-up solution to \eqref{3.ac} if the solution $u$ is odd  and $u^{\prime}(0)>0$.
\end{defn}

\begin{defn}[Odd zero-up ground states]
\label{def3.2}
For each $0<\ka <1$, we define $U_{\ka}= u_{\ka}$, where $u_{\ka}$
is obtained in Proposition \ref{sdy1} as the odd zero-up ground state solution
to \eqref{3.ac}. We also define the odd zero-up ground state energies $E_{\ka}^{(0)}$ as
\begin{equation}
\label{E_grd001}
\begin{aligned}
E_{\ka}^{(0)} & = \int_{\mathbb T}
\Bigl( \frac 12 \ka^2 ( U_{\ka}^{\prime}(x) )^2 + \frac 14 ( U_{\ka}(x)^2-1)^2  \Bigr)
dx\\&
= \int_{\mathbb T} \Bigl( \frac 12 (U_{\ka}(x)^2-1)^2 -\frac 14
(N_{\ka}^2-1)^2 \Bigr) dx,
\end{aligned}
\end{equation}
where we recall $0<N_{\ka}<1$ is the unique number satisfying
\begin{equation}
\label{3.def-N}
\int_0^{\frac\pi2}\frac{1}{\sqrt{2-N_{\ka}^2(1+\sin^2\theta)}}d\theta=\frac{\pi}{2\sqrt{2}\ka}.
\end{equation}
\end{defn}

For any solution of \eqref{3.ac}, we assume that its minimal period is $2\pi/m$ for some suitable positive integer $m$. From the proof of Proposition \ref{pra.1}, it is not difficult to see that $u(x)$ has $2m$ zero points in $[x,x+2\pi)$ for any $x\in\mathbb{R}$ and $u(x)$ has odd symmetry with respect to any zero point. In addition, we can easily prove that the distance between any two consecutive zero points is same and equals to $2\pi/m$. After a suitable shift of the solution, we may assume $u(0)=0$ and $u$ is odd. On the other hand, if $u(x)$ is a solution to \eqref{3.ac}, obviously $u(-x)$ is also a solution. Hence, we may assume that $u'(0)>0$ after reflection if necessary. Therefore, in this section we shall restrict our discussion on the odd zero-up solutions of equation \eqref{3.ac}. Concerning all the odd zero-up solutions to \eqref{3.ac}, we have the following classification result

\begin{thm}[Classification of odd zero-up solutions to \eqref{3.ac}]
\label{tha.1}
	For any $0<\ka<1$, define $m_{\ka}\ge 1$ as the unique integer such that
	\begin{align}
		\frac 1 {m_{\ka}+1} \le \ka  <\frac 1 {m_{\ka}}.
	\end{align}
	Then there are only $m_{\ka}$ odd zero-up solutions to \eqref{3.ac}. More precisely
	the following hold:
	
	For each $j=1,\cdots,m_{\ka}$, define (note below that $j\ka<1$)
	\begin{align}
		\tilde u_{\ka, j} (x) = U_{j\ka} ( jx).
	\end{align}
	Then $\{ \tilde u_{\ka, j} \}_{j=1}^{m_{\ka}}$ are all the possible odd
	zero-up solutions to \eqref{3.ac}. Furthermore the energies of $\tilde u_{\ka, j}$ are
	given by
	\begin{align}
		E_{\ka, j} & = \int_{\mathbb T}
		\Bigl( \frac 12 (\ka \partial_x {\tilde u_{\ka,j} } )^2
		+ \frac 14 ( \tilde u_{\ka,j}^2-1)^2 \Bigr) dx
		= E_{j\ka}^{(0)}, \label{E_jeps002}
	\end{align}
	where $E_{j\ka}^{(0)}$ was defined in \eqref{E_grd001}.
\end{thm}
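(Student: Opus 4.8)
The plan is to set up a bijection between odd zero-up solutions of \eqref{3.ac} and the integers $m$ with $m\kappa<1$, by rescaling each such solution to the minimal-period-$2\pi$ case already settled in Proposition \ref{sdy1}, and then compute the energies by a change of variables. \textbf{Step 1 (structure of an odd zero-up solution).} Let $u$ be an odd zero-up solution of \eqref{3.ac}. Since $u'(0)>0$, $u$ is non-constant, hence, being $2\pi$-periodic, it has minimal period $2\pi/m$ for a unique integer $m\ge1$. Rewriting the first integral \eqref{a.id1} as $\kappa^2(u')^2=\frac12(u^2-1)^2+C-\frac12$ and invoking Proposition \ref{pra.1}, the constant must satisfy $0<C<\frac12$ (the remaining cases force $u\equiv0$, $u\equiv\pm1$ or $u=\pm\tanh(\cdot)$, none of which is a non-constant periodic odd zero-up solution); consequently $|u|<1$ and $u$ oscillates between $\pm N$, where $N:=\max u\in(0,1)$ is the smaller positive root of $(u^2-1)^2=1-2C$. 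Starting from $u(0)=0$, $u'(0)>0$, the solution rises monotonically to $u(x_1)=N$, $u'(x_1)=0$ at a finite point $x_1>0$; by uniqueness for the ODE together with the reflection principle of Proposition \ref{pr-sect2}, the graph of $u$ is an even reflection about $x_1$, then an odd reflection about $2x_1$, and so on, so that the minimal period is exactly $4x_1$ and hence $x_1=\pi/(2m)$. Separating variables on $[0,x_1]$ gives $\pi/(2m)=\sqrt2\,\kappa\, g(N)$ with $g$ as in \eqref{def:gN}, i.e. $g(N)=\pi/(2\sqrt2\,m\kappa)$. By Proposition \ref{sdy1}(1)--(2) such an $N\in(0,1)$ exists if and only if $\pi/(2\sqrt2\,m\kappa)>g(0)=\pi/(2\sqrt2)$, that is $m\kappa<1$, equivalently $1\le m\le m_\kappa$; in that case $N$ is the unique root $N_{m\kappa}$. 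Since $u|_{[0,x_1]}$ is recovered from $N$ by inverting the separated ODE, and $u$ on all of $\mathbb R$ is then determined by reflection and periodicity, $u$ is uniquely determined by $m$.

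\textbf{Step 2 (identification and counting).} For $u$ as above, put $v(x)=u(x/m)$. Then $v$ is odd with $v'(0)>0$, has minimal period $2\pi$, and a one-line computation shows $v$ solves $(m\kappa)^2v''+v-v^3=0$; by Step 1 with $m=1$ (equivalently by Proposition \ref{sdy1}(3) and Definition \ref{def3.2}), $v=U_{m\kappa}$, so $u(x)=U_{m\kappa}(mx)=\tilde u_{\kappa,m}(x)$. Conversely, for each $j\in\{1,\dots,m_\kappa\}$ we have $j\kappa<1$, so $U_{j\kappa}$ is defined, and reversing the rescaling shows $\tilde u_{\kappa,j}(x)=U_{j\kappa}(jx)$ is an odd zero-up solution of \eqref{3.ac} with minimal period $2\pi/j$. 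These $m_\kappa$ solutions have pairwise distinct minimal periods, hence are distinct, and by the first part they exhaust all odd zero-up solutions; thus \eqref{3.ac} has exactly $m_\kappa$ of them.

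\textbf{Step 3 (energies).} Substituting $\tilde u_{\kappa,j}(x)=U_{j\kappa}(jx)$ into \eqref{E_jeps002} gives $\partial_x\tilde u_{\kappa,j}(x)=jU_{j\kappa}'(jx)$, so the integrand becomes $\frac12(j\kappa)^2\bigl(U_{j\kappa}'(jx)\bigr)^2+\frac14\bigl(U_{j\kappa}(jx)^2-1\bigr)^2$. The change of variables $y=jx$ produces a Jacobian $1/j$, while $\tilde u_{\kappa,j}$ runs through exactly $j$ periods as $x$ ranges over $\mathbb T$ (since $U_{j\kappa}$ is $2\pi$-periodic); the two factors cancel and
\begin{equation*}
E_{\kappa,j}=\int_{\mathbb T}\Bigl(\frac12(j\kappa)^2\bigl(U_{j\kappa}'(y)\bigr)^2+\frac14\bigl(U_{j\kappa}(y)^2-1\bigr)^2\Bigr)dy=E_{j\kappa}^{(0)}
\end{equation*}
by the definition \eqref{E_grd001} of $E_{j\kappa}^{(0)}$.

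I expect the main obstacle to be Step 1: justifying that the minimal period is a divisor $2\pi/m$ of $2\pi$, that the solution is of oscillatory type with a single interior maximum per half-period so that the patching of Proposition \ref{pr-sect2} applies cleanly, and that it is completely determined by the single number $N=u(\pi/(2m))$ — in short, correctly marshalling Propositions \ref{pr-sect2}, \ref{pra.1} and \ref{sdy1} into the bijection between odd zero-up solutions and integers $m$ with $m\kappa<1$. The rescaling in Step 2 and the computation in Step 3 are then routine.
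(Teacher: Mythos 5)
Your proposal is correct and follows essentially the same route as the paper: locate the first peak of $u$ at $x=\pi/(2m)$, rescale by $y=mx$ to reduce to the minimal-period-$2\pi$ case settled in Proposition \ref{sdy1}, identify the rescaled solution with $U_{m\kappa}$ (forcing $m\kappa<1$), and compute the energies by the change of variables together with the $2\pi$-periodicity of $U_{j\kappa}$. Your Step 1 merely supplies more detail (via Propositions \ref{pra.1} and \ref{pr-sect2}) for what the paper states as "the crucial observation" about the location of the first peak.
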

\begin{proof}
	Suppose $u$ is a possible odd zero-up solution to \eqref{3.ac}.
	The crucial observation is that we must have $u$ achieves its first peak
	at $x=\frac {\pi}{2j}$ for some integer $j\ge 1$.  Now make a change of variable
	$y= j x$, and $\tilde u(y) = u(x)$. Then clearly
	\begin{align}
		j^2 \ka^2 \frac {d^2}{dy^2} \tilde u -\tilde u + \tilde u^3=0,
	\end{align}
	$\tilde u(0)=0$, $\tilde u^{\prime}(0)>0$, and $\tilde u^{\prime}(\frac {\pi}2)=0$. From the proof in Step 3 of Proposition \ref{sdy1}, there exists a unique solution $u$ with $|u|<1$ solving the equation
	\begin{align}
		u^{\prime} = \frac 1 {\sqrt 2 \ka} \sqrt{ (1-u^2)^2- (1-N_{\ka}^2)^2},
	\end{align}
	with $u(0)=0$, $u^{\prime}(\frac {\pi}2)=0$.  As a consequence, we obtain that
	$\tilde u = U_{j\ka}$.  Now note that $j\ka<1$ and this gives the constraint
	$j\le m_{\ka}$.   The characterization \eqref{E_jeps002} follows from
	the fact that
	\begin{align}
		E_{\ka,j} &= \int_{\mathbb T} \Bigl( \frac 12 (\tilde u_{\ka,j}(x)^2-1)^2 -\frac 14
		(N_{j\ka}^2-1)^2 \Bigr) dx \notag \\
		&= \int_{\mathbb T} \Bigl( \frac 12 (U_{j\ka}(jx)^2-1)^2 -\frac 14
		(N_{j\ka}^2-1)^2 \Bigr) dx,
	\end{align}
	and the fact that $U_{j\ka}$ is $2\pi$-periodic.
\end{proof}

By Theorem \ref{tha.1} one can easily get Theorem \ref{thm1.1}. We notice that the $C^0$ estimate in the point (d) of Theorem \ref{thm1.2} follows easily by \eqref{sdy1.2}. While for the point (e), one can easily prove it by some direct computations. For the left conclusions in Theorem \ref{thm1.2}, we rephrase it as the following result for the odd zero-up solutions

\begin{thm}
[Monotonicity and asymptotics of odd zero-up ground state energies]
\label{grd_mono001}
For any $\ka>0$, define
\begin{align}
\tilde E_{\ka} = \inf_{u \in \mathcal S_O}
\int_{\mathbb T} \Bigl( \frac 12 (\ka \partial_x u)^2 + \frac 14 (u^2-1)^2 \Bigr)dx,
\end{align}
where
\begin{align}
\mathcal S_O=\{  \phi\mid \text{$\phi:\; \mathbb T \to \mathbb R$ is odd and $C^1$},~\phi'(0)>0 \}.
\end{align}
Then we have
\begin{itemize}
\item [(a).] $\tilde E_{\ka} =\frac \pi2$ for $\ka\ge 1$. Furthermore
\begin{align}
\int_{\mathbb T} \Bigl( \frac 12 (\ka \partial_x u)^2 + \frac 14 (u^2-1)^2 \Bigr)dx
> \frac {\pi} 2
\end{align}
for any $u$ not identically zero.

\item [(b).] $\tilde E_{\ka} = E_{\ka}^{(0)}$ for $0<\ka<1$. Moreover the infimum is only achieved by $U_{\ka}$.
\item [(c).] If $0<\ka_1<\ka_2\le 1$, then $\tilde E_{\ka_1} <\tilde E_{\ka_2}$.
\end{itemize}
Furthermore
\begin{align} \label{E_eplimit00}
\lim_{\ka \to 0} \frac {E_{\ka}^{(0)} } { {\ka} } = \gamma_* = \frac 4 3 \sqrt 2 >0.
\end{align}
\end{thm}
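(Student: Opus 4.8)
The plan is to build everything around the scalar function $\mathcal E(\ka):=E_\ka^{(0)}=\int_{\mathbb T}\big(\tfrac12\ka^2(U_\ka')^2+\tfrac14(U_\ka^2-1)^2\big)\,dx$, $0<\ka<1$, together with the fact that $\mathcal E$ extends continuously to $\ka=1$ with value $\tfrac\pi2$: by Proposition~\ref{sdy1}(1)--(2) one has $N_\ka\to0$ as $\ka\to1^-$, hence $\|U_\ka\|_{L^\infty}\le N_\ka\to0$, and the change of variables of the final step makes the Dirichlet part of $\mathcal E(\ka)$ vanish as well. Part (a) I would treat separately and directly: for odd $u\in C^1(\mathbb T)$ expand $u=\sum_{n\ge1}b_n\sin nx$ and use $\tfrac14(u^2-1)^2=\tfrac14-\tfrac12u^2+\tfrac14u^4$ to get $\int_{\mathbb T}\big(\tfrac12\ka^2(u')^2+\tfrac14(u^2-1)^2\big)\,dx=\tfrac\pi2+\tfrac\pi2\sum_{n\ge1}(\ka^2n^2-1)b_n^2+\tfrac14\int_{\mathbb T}u^4\,dx$; for $\ka\ge1$ this is $\ge\tfrac\pi2$ with equality only when $u\equiv0$, while $\eps\sin x\in\mathcal S_O$ has energy tending to $\tfrac\pi2$ as $\eps\to0$, so $\tilde E_\ka=\tfrac\pi2$ and is not attained on $\mathcal S_O$.

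The engine for (b) and (c) is the strict monotonicity of $\mathcal E$ on $(0,1)$, which I would prove by an envelope (Hellmann--Feynman) argument. Since $g\in C^\infty$ with $g'>0$ on $[0,1)$, the relation $g(N_\ka)=\tfrac\pi{2\sqrt2\ka}$ and the implicit function theorem make $\ka\mapsto N_\ka$, hence $\ka\mapsto U_\ka\in H^1(\mathbb T)$, continuously differentiable. Because $U_\ka$ is an unconstrained critical point of $u\mapsto\int_{\mathbb T}\big(\tfrac12\ka^2(u')^2+\tfrac14(u^2-1)^2\big)\,dx$ — its Euler--Lagrange equation being exactly \eqref{3.ac} — the variational derivative at $U_\ka$ annihilates $\partial_\ka U_\ka$, so differentiating in $\ka$ leaves only the explicit term: $\mathcal E'(\ka)=\int_{\mathbb T}\ka(U_\ka')^2\,dx=\tfrac{4}{\sqrt2}\int_0^{N_\ka}\sqrt{(1-u^2)^2-(1-N_\ka^2)^2}\,du>0$, the second equality by the substitution $u=U_\ka(x)$. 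Hence $\mathcal E$ is strictly increasing on $(0,1)$, and by continuity at $1$, $\mathcal E(\ka)<\tfrac\pi2$ there.

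For (b) I would run the direct method. First $\tilde E_\ka=\inf\{E_\ka(u):u\in H^1(\mathbb T)\text{ odd},\ u\not\equiv0\}$, the nontrivial inequality coming by mollifying an odd competitor, flipping it to $-u$ if its derivative at $0$ is negative, and adding a vanishing multiple of $\sin x$ if that derivative is zero. The infimum over all odd $H^1$ functions (including $0$) is attained, since the functional is coercive on $H^1(\mathbb T)$ and weakly lower semicontinuous (gradient term by convexity, potential term by $H^1(\mathbb T)\hookrightarrow\hookrightarrow L^4(\mathbb T)$) and oddness is weakly closed; call a minimizer $u_*$. As $E_\ka(U_\ka)=\mathcal E(\ka)<\tfrac\pi2=E_\ka(0)$ by the step above, $u_*\not\equiv0$; testing against odd variations and using that $-\ka^2u_*''+u_*^3-u_*$ is odd, hence orthogonal to all odd functions, hence identically $0$ (after the standard elliptic bootstrap making $u_*$ smooth), shows $u_*$ solves \eqref{3.ac}. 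Thus $u_*$ is a nonconstant odd steady state with $u_*'(0)\ne0$ (otherwise the ODE forces $u_*\equiv0$), so up to the reflection $u_*\mapsto-u_*$ we may assume $u_*\in\mathcal S_O$, and Theorem~\ref{tha.1} identifies $u_*=\tilde u_{\ka,j}$ with $E_\ka(u_*)=E_{j\ka}^{(0)}=\mathcal E(j\ka)$ for some $1\le j\le m_\ka$. Strict monotonicity of $\mathcal E$ pins the minimum to $j=1$, so $\tilde E_\ka=E_\ka^{(0)}$, attained uniquely by $\tilde u_{\ka,1}=U_\ka$; this is (b), and (c) follows at once: $\tilde E_{\ka_1}=\mathcal E(\ka_1)<\mathcal E(\ka_2)=\tilde E_{\ka_2}$ when $0<\ka_1<\ka_2<1$, and $\tilde E_{\ka_1}=\mathcal E(\ka_1)<\tfrac\pi2=\tilde E_1$ when $\ka_2=1$.

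Finally, for the asymptotics \eqref{E_eplimit00} I would rewrite \eqref{E_grd001}, using $\ka^2(U_\ka')^2=\tfrac12\big((1-U_\ka^2)^2-(1-N_\ka^2)^2\big)$ and the fourfold symmetry of $U_\ka$ about $0$ and $\tfrac\pi2$, as $E_\ka^{(0)}=\tfrac{4\ka}{\sqrt2}\int_0^{N_\ka}\sqrt{(1-u^2)^2-(1-N_\ka^2)^2}\,du+\tfrac\pi2(1-N_\ka^2)^2$, the main term again by the substitution $u=U_\ka(x)$ on $[0,\tfrac\pi2]$. By \eqref{sdy1.200a} the last term is $O(e^{-c/\ka})=o(\ka)$, and since $0\le\sqrt{(1-u^2)^2-(1-N_\ka^2)^2}\le 1-u^2$ on $[0,N_\ka]$ with pointwise limit $1-u^2$ as $N_\ka\to1$, dominated convergence yields $\int_0^{N_\ka}\sqrt{(1-u^2)^2-(1-N_\ka^2)^2}\,du\to\int_0^1(1-u^2)\,du=\tfrac23$, whence $E_\ka^{(0)}/\ka\to\tfrac8{3\sqrt2}=\tfrac43\sqrt2$. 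The main obstacle is the bookkeeping in the previous paragraph: showing the infimum is genuinely attained, that the minimizer is nonzero precisely for $\ka<1$ (which is where the monotonicity of $\mathcal E$ is indispensable and must be proved independently of (b) to avoid circularity), and that the minimizer is a bona fide steady state so that the classification in Theorem~\ref{tha.1} applies; parts (a) and the limit are then short computations given the change of variables and the exponential smallness of $1-N_\ka$.
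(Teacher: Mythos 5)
Your proposal is correct, but it reaches the conclusion by a genuinely different route in each of the three nontrivial steps, and in two of them it is more complete than the argument in the paper. For the monotonicity that drives (b) and (c), the paper's engine is Lemma \ref{lea.1} (a pointwise ODE comparison showing $U_{\ka_1}>U_{\ka_2}$ on $(0,\tfrac\pi2]$ for $\ka_1<\ka_2$, obtained from the implicit relation \eqref{a.acahn-2}) combined with the integration-by-parts identity $E^{(0)}_{\ka}=\int_0^{\pi/2}(1-U_\ka^4)\,dx$ of \eqref{a.g.com-1}; you instead differentiate the energy in $\ka$ and use criticality of $U_\ka$ to kill the $\partial_\ka U_\ka$ term, getting $\mathcal E'(\ka)=\ka\int_{\mathbb T}(U_\ka')^2\,dx>0$. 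Both are valid; the envelope argument is slicker but requires the $C^1$ dependence $\ka\mapsto U_\ka$ (which you correctly reduce to the implicit function theorem for $g(N_\ka)=\tfrac{\pi}{2\sqrt2\ka}$), whereas the paper's comparison is more elementary and also yields the pointwise ordering of profiles, which is reused elsewhere. For (b), the paper essentially only compares energies among the already-classified solutions $\tilde u_{\ka,j}$ and asserts $\tilde E_\ka=E_\ka^{(0)}$; your direct-method argument (coercivity, weak lower semicontinuity on the closed subspace of odd functions, Euler--Lagrange on that subspace using that the residual is itself odd, exclusion of $0$ via $E_\ka(U_\ka)<\tfrac\pi2$, then Theorem \ref{tha.1} plus monotonicity to pin $j=1$) supplies the attainment step that the paper leaves implicit, and your Fourier-series computation for (a) likewise makes precise what the paper dismisses as easy. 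Finally, for \eqref{E_eplimit00} the paper passes through the $\tanh$ approximation \eqref{sdy1.2}, while you compute exactly via the substitution $u=U_\ka(x)$ and dominated convergence as $N_\ka\to1$, which avoids having to control the error of the profile approximation in the Dirichlet term; the only additional ingredient you need, $\tfrac\pi2(1-N_\ka^2)^2=o(\ka)$, follows from \eqref{sdy1.200a} exactly as you say.
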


Before proving Theorem \ref{grd_mono001}, we establish the following important lemma.

\begin{lem}
\label{lea.1}
Let $U_\kappa$ be the odd zero-up ground state to \eqref{3.ac}. Suppose that $0<\kappa_1<\kappa_2<1$, then we have
\begin{equation}
U_{\kappa_1}(x)>U_{\kappa_2}(x),\quad x\in\left(0,\frac\pi2\right].
\end{equation}	
\end{lem}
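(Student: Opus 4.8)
The plan is to compare the two ground states through their common description via the first-order ODE and the monotone quantity $N_\kappa$. Recall from Proposition \ref{sdy1} that for $0<\kappa<1$ the odd zero-up ground state $U_\kappa$ is the unique solution on $[0,\frac\pi2]$ of
\begin{equation}
\label{lea1.firstorder}
U_\kappa^{\prime}(x) = \frac{1}{\sqrt2\,\kappa}\sqrt{(1-U_\kappa^2)^2-(1-N_\kappa^2)^2},\qquad U_\kappa(0)=0,\quad U_\kappa\Bigl(\frac\pi2\Bigr)=N_\kappa,
\end{equation}
where $N_\kappa\in(0,1)$ is determined by $g(N_\kappa)=\frac{\pi}{2\sqrt2\,\kappa}$. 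Since $g$ is strictly increasing on $[0,1)$ (Proposition \ref{sdy1}(1)) and $\frac{\pi}{2\sqrt2\,\kappa}$ is strictly decreasing in $\kappa$, we get the key monotonicity $N_{\kappa_1}>N_{\kappa_2}$ whenever $0<\kappa_1<\kappa_2<1$. This is the engine of the whole comparison: a larger endpoint value $N$ should force a pointwise larger profile.

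Next I would make this precise by the change of variables that inverts \eqref{lea1.firstorder}. Writing $x=h_\kappa(u)$ where
\begin{equation}
\label{lea1.inverse}
h_\kappa(u)=\int_0^u \frac{\sqrt2\,\kappa}{\sqrt{(1-y^2)^2-(1-N_\kappa^2)^2}}\,dy,\qquad 0\le u\le N_\kappa,
\end{equation}
the profile $U_\kappa$ on $[0,\frac\pi2]$ is exactly the inverse function of $h_\kappa$. So it suffices to show that $h_{\kappa_1}(u)<h_{\kappa_2}(u)$ for every $u\in(0,N_{\kappa_2}]$; indeed, fixing a value $u=U_{\kappa_2}(x)$ for some $x\in(0,\frac\pi2]$, this inequality says $h_{\kappa_1}(u)<x$, which by strict monotonicity of $h_{\kappa_1}$ (equivalently of $U_{\kappa_1}$) gives $U_{\kappa_1}(x)>u=U_{\kappa_2}(x)$. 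To compare the integrands in \eqref{lea1.inverse} I would use the substitution $y=N_\kappa\sin\theta$ as in \eqref{def:gN}, turning $h_\kappa(u)$ into $\kappa\int_0^{\arcsin(u/N_\kappa)}\bigl(2-N_\kappa^2(1+\sin^2\theta)\bigr)^{-1/2}\,d\theta$; here one must be slightly careful that the upper limit $\arcsin(u/N_\kappa)$ also depends on $\kappa$, but since $N_{\kappa_1}>N_{\kappa_2}$ the limit for $\kappa_1$ is no larger and the integrand comparison still has to be organized correctly. A cleaner route is to differentiate: set $\Phi(u,\kappa)=h_\kappa(u)$ and show $\partial_\kappa \Phi < 0$ along the curve $\kappa\mapsto(u,\kappa)$ taking into account that we want a \emph{strict} inequality; or, most robustly, argue directly on the ODE via a comparison/Gronwall argument on $\eta=U_{\kappa_1}-U_{\kappa_2}$ near $x=0$, using that both start at $0$ but the right-hand side of \eqref{lea1.firstorder} for $\kappa_1$ dominates that for $\kappa_2$ because $\frac{1}{\kappa_1}>\frac{1}{\kappa_2}$ and $(1-N_{\kappa_1}^2)^2<(1-N_{\kappa_2}^2)^2$. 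Since at $x=0$ both solutions vanish and $U_{\kappa_1}^{\prime}(0)>U_{\kappa_2}^{\prime}(0)$ (from \eqref{lea1.firstorder} with $u=0$), we get $\eta>0$ for small $x>0$; then a standard first-crossing argument shows $\eta$ cannot return to $0$ on $(0,\frac\pi2]$, because at a hypothetical first zero $x_*$ one would have $U_{\kappa_1}(x_*)=U_{\kappa_2}(x_*)$ yet $U_{\kappa_1}^{\prime}(x_*)>U_{\kappa_2}^{\prime}(x_*)$, contradicting $\eta(x_*)=0$ being approached from above.

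The main obstacle is the degeneracy of the square root at the endpoints: the integrand in \eqref{lea1.inverse} blows up as $u\to N_\kappa$ (and $U_\kappa^{\prime}$ vanishes there), so the naive first-crossing argument has to be handled with care at $x=\frac\pi2$, and the substitution $y=N_\kappa\sin\theta$ is precisely what desingularizes it. I expect the cleanest write-up to combine both ideas: use the trigonometric substitution to get a clean closed form for $h_\kappa(u)$ in which the $\kappa$-dependence is transparent (an overall factor $\kappa$ times an integral whose integrand $\bigl(2-N_\kappa^2(1+\sin^2\theta)\bigr)^{-1/2}$ is increasing in $N_\kappa$ hence decreasing in $\kappa$), then deduce $h_{\kappa_1}(u)<h_{\kappa_2}(u)$ on the common domain $(0,N_{\kappa_2}]$ by a direct term-by-term comparison, being careful to also account for the shrinking of the $\theta$-interval. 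Inverting this inequality yields $U_{\kappa_1}>U_{\kappa_2}$ on $(0,\frac\pi2]$, with equality excluded at $x=\frac\pi2$ because $N_{\kappa_1}>N_{\kappa_2}$ strictly.
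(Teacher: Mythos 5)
Your proposal is correct and follows essentially the same route as the paper: derive $N_{\kappa_1}>N_{\kappa_2}$ from the strict monotonicity of $g$, represent $U_\kappa$ through the inverse integral $x=h_\kappa(u)=\int_0^{U_\kappa(x)}\frac{\sqrt2\,\kappa}{\sqrt{(1-s^2)^2-(1-N_\kappa^2)^2}}\,ds$, compare the integrands term by term (smaller $\kappa$ in the numerator, larger $N_\kappa$ hence larger denominator), and invert. The paper carries out exactly this direct comparison without needing the trigonometric substitution or the first-crossing/Gronwall alternatives you hedge with, since the improper integrals converge and the pointwise integrand inequality on $(0,N_{\kappa_2})$ already suffices.
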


\begin{proof}
At first, we notice that $U_\kappa(0)=0$, $U'_\kappa$ is monotone increasing for $x\in\left(0,\frac\pi2\right).$  By equation \eqref{a.id1} we have
\begin{equation}
\label{a.acahn-1}
U'_\kappa(x)=\frac{\sqrt{(1-U^2_\kappa(x))^2-(1-N^2_{\kappa})^2}}{\sqrt{2}\kappa},\quad \mbox{for}~x\in\left(0,\frac\pi2\right),
\end{equation}
where $N_\kappa$ is the maximal value of $u_\kappa$ in $[0,\frac\pi2]$, i.e., $N_\kappa=U_\kappa(\frac\pi2)$. By \eqref{a.acahn-1} we have
\begin{equation}
\int_0^x\frac{U'_\kappa(x)}{\sqrt{U_\kappa^4(x)-2U_\kappa^2(x)+2N_\kappa^2-N_\kappa^4}}dx=\frac{x}{\sqrt{2}\kappa},\quad x\in\left(0,\frac\pi2\right),
\end{equation}
which is equivalent to
\begin{equation}
\label{a.acahn-2}
\int_0^{U_\kappa(x)}\frac{\kappa}{\sqrt{s^4-2s^2+2N_\kappa^2-N_\kappa^4}}ds=\frac{x}{\sqrt{2}}, \quad x\in\left(0,\frac\pi2\right).
\end{equation}
If $0<\kappa_1<\kappa_2<1$, we have $1>N_{\kappa_1}>N_{\kappa_2}>0$ by equation \eqref{3.def-N}.
This implies that
\begin{align}1>2N_{\kappa_1}^2-N_{\kappa_1}^4>2N_{\kappa_2}^2-N_{\kappa_2}^4>0.\end{align}
Therefore for any $s\in(0,\min\{N_{\kappa_1},N_{\kappa_2}\})$ we have
\begin{equation}
\label{a.acahn-3}
\frac{\kappa_1}{\sqrt{s^4-2s^2+2N_{\kappa_1}^2-N_{\kappa_1}^4}}<\frac{\kappa_2}{\sqrt{s^4-2s^2+2N_{\kappa_2}^2-N_{\kappa_2}^4}}.
\end{equation}
Together with \eqref{a.acahn-2} we derive that $U_{\kappa_1}(x)>U_{\kappa_2}(x)$ for $x\in\left(0,\frac\pi2\right]$. This proves the lemma.
\end{proof}

\begin{proof}[Proof of Theorem \ref{grd_mono001}.]
We shall prove Theorem \ref{grd_mono001} point by point. For point (a), we notice that $u\equiv 0$ is the only odd zero-up solution to \eqref{3.ac} whenever $\kappa\ge1$. Then it is easy to verify that $\tilde E_\kappa=\frac\pi2$ for $\kappa\ge1$.

Next, we consider the point (b). For any $2\pi$-periodic odd zero-up solution of \eqref{3.ac} which is different by $U_\kappa$, we denote its minimal period by $2\pi/m$ and the solution by $u_m,~m\geq2$.
Consider the function
\begin{align}v(y)=u_m(x),\quad y=mx.\end{align}
Then it is not difficult to verify that
\begin{align}v(x)=U_{m\kappa}(x)\quad\mathrm{and}\quad x\in\left(0,\frac\pi2\right).\end{align}
By Lemma \ref{lea.1}, for $m\geq2 $ we have
\begin{equation}
\label{a.g.com}
U_\kappa(x)< U_{m\kappa}(x) \quad \mbox{for}\quad x\in \left(0,\frac\pi2\right).	
\end{equation}
On the other hand, we notice that
\begin{equation}
\label{a.g.com-1}
\begin{aligned}
E^{(0)}_{\kappa}=~&\int_0^{\frac{\pi}{2}}\left(2\kappa^2 U_\kappa'^2+(1-U_\kappa^2)^2\right)dx\\
=~&2\kappa^2 U_\kappa U_\kappa'\mid_{x=0}^{x=\frac\pi2}+\int_0^{\frac{\pi}{2}}\left(-2\kappa^2 U_\kappa U''_\kappa+(1-U_\kappa^2)^2\right)dx\\
=~&\int_0^{\frac{\pi}{2}}(1-U_\kappa^4)dx.
\end{aligned}	
\end{equation}
Using \eqref{a.g.com} we have
\begin{equation}
\label{a.g.com-2}
E^{(0)}_{\kappa}>E^{(0)}_{m\kappa}.
\end{equation}
By equation \eqref{E_jeps002} we get
\begin{align}E^{(0)}_{m\kappa}=E(u_m)=\int_{\mathbb{T}}\left(\frac12(\kappa u_m)^2+\frac14(u_m^2-1)^2\right)dx.\end{align}
Together with \eqref{a.g.com-2} we obtain that
\begin{align}E^{(0)}_{\kappa}=\tilde E_\kappa,\end{align}
and it proves the point (b).

The point (c) follows easily by point (b), Lemma \ref{lea.1} and equation \eqref{a.g.com-1}.

In the end,  we shall show the asymptotics as $\ka \to 0$.
By Proposition \ref{sdy1}, the main part of $U_{\ka}$ on $[-\frac \pi 2, \frac \pi 2]$ is given by
$\tanh(\frac x {\sqrt 2 \ka} )$. The result \eqref{E_eplimit00} then follows from
a simple computation
\begin{align}
\gamma_*={\sqrt 2}  \int_{\mathbb R} (\tanh^2 y -1)^2 dy
 = {\sqrt 2} \int_{\mathbb R} (1-\tanh^2 y) \, d \tanh y
=\frac 43 {\sqrt 2},
\end{align}
where $y = \frac x {\sqrt 2 \ka}$.
\end{proof}


\begin{cor}\label{cor}
For any $0<\ka<1$, if $u_0$ is odd, $2\pi$-periodic (monotonicity of $u_0$ is not required), and $E(u_0)< E_{2\ka}^{(0)}$, then the steady state of \eqref{1.ac} is $\pm U_\ka$, where $U_\ka$ be the odd zero-up ground state to \eqref{3.ac}.
\end{cor}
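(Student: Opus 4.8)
The plan is to combine the energy dissipation of the Allen-Cahn flow with the classification of steady states from Theorem \ref{tha.1} and the strict energy monotonicity from Theorem \ref{grd_mono001}(c). First I would observe that since $u_0$ is odd and $2\pi$-periodic, the symmetry is preserved by the flow \eqref{1.ac}: the solution $u(\cdot,t)$ remains odd and $2\pi$-periodic for all $t\ge 0$, because oddness and periodicity are invariant under $\partial_t u = \ka^2\partial_{xx} u - f(u)$ with the odd nonlinearity $f(u)=u^3-u$. Consequently every $\omega$-limit point is an odd, $2\pi$-periodic steady state of \eqref{3.ac}. By the standard Lyapunov structure of the Allen-Cahn equation, $t\mapsto E_\ka(u(\cdot,t))$ is nonincreasing, and (using compactness of the orbit in $H^1(\mathbb T)$, which follows from smoothing and the a priori $L^\infty$ bound) the $\omega$-limit set is nonempty and consists of steady states $u_\infty$ with $E_\ka(u_\infty)\le E_\ka(u_0)<E_{2\ka}^{(0)}$.

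Next I would invoke the classification: any odd $2\pi$-periodic steady state is (up to odd reflection, which does not change the energy) one of $0$, $\pm 1$, or $\tilde u_{\ka,j}(x)=U_{j\ka}(jx)$ for some $1\le j\le m_\ka$, with energy $E_\ka(\tilde u_{\ka,j})=E_{j\ka}^{(0)}$ by \eqref{E_jeps002}. Here $\pm 1$ is not odd, so it is excluded. The strict monotonicity $\tilde E_{\ka_1}<\tilde E_{\ka_2}$ for $0<\ka_1<\ka_2\le 1$ in Theorem \ref{grd_mono001}(c) gives $E_{j\ka}^{(0)}\ge E_{2\ka}^{(0)}$ for every $j\ge 2$, and $E_\ka^{(0)}=\tilde E_\ka<\frac\pi2$ while the zero function has energy $E_\ka(0)=\frac\pi2$. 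Therefore any steady state other than $\pm U_\ka$ (i.e.\ $j\ge 2$) has energy at least $E_{2\ka}^{(0)}$, and the zero function also has energy $\frac\pi2>E_{2\ka}^{(0)}$ is false in general—so I instead argue directly: the hypothesis $E_\ka(u_0)<E_{2\ka}^{(0)}\le E_{j\ka}^{(0)}$ for all $j\ge 2$ rules out $\omega$-limit points of the form $\tilde u_{\ka,j}$ with $j\ge 2$, and also rules out $u_\infty\equiv 0$ once we note $E_{2\ka}^{(0)}<E_\ka^{(0)}<\frac\pi2 = E_\ka(0)$, which contradicts $E_\ka(u_\infty)\le E_\ka(u_0)<E_{2\ka}^{(0)}$. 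Hence the only possible $\omega$-limit point is $\pm U_\ka$.

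Finally, to upgrade "every $\omega$-limit point is $\pm U_\ka$" to genuine convergence, I would use connectedness of the $\omega$-limit set together with the fact that $U_\ka$ and $-U_\ka$ are isolated in the set of admissible steady states (they are separated in $H^1$), so the orbit cannot oscillate between them; alternatively one cites the \L ojasiewicz--Simon inequality for the Allen-Cahn energy to obtain convergence to a single steady state. The main obstacle I anticipate is the bookkeeping in the last step: ensuring the flow really does converge (not merely accumulates) to one of $\pm U_\ka$, and carefully checking that $0$ is excluded—this requires the strict inequality $E_{2\ka}^{(0)}<E_\ka^{(0)}$ from Theorem \ref{grd_mono001}(c) applied with $\ka_1=\ka$, $\ka_2=2\ka$ when $2\ka<1$ (and a separate trivial argument when $2\ka\ge 1$, where no $j\ge 2$ steady state exists and one only needs $E_\ka(u_0)<E_\ka(0)=\frac\pi2$, which already forces $u_\infty\ne 0$ hence $u_\infty=\pm U_\ka$).
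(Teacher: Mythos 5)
Your overall strategy is the same as the paper's: the paper's (two-line) proof is exactly the observation that the energy dissipates along the flow while every odd steady state other than $\pm U_\ka$ has energy at least $E_{2\ka}^{(0)}$, so the limit must have minimal period $2\pi$; your write-up just makes explicit the symmetry preservation, the $\omega$-limit compactness, and the convergence upgrade (the last of which the paper silently assumes, so your extra care there is welcome rather than redundant).

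There is, however, one concrete error you should fix: you twice assert $E_{2\ka}^{(0)}<E_\ka^{(0)}$ and attribute it to Theorem \ref{grd_mono001}(c) with $\ka_1=\ka$, $\ka_2=2\ka$. That theorem gives the \emph{opposite} direction: $\tilde E_{\ka_1}<\tilde E_{\ka_2}$ for $\ka_1<\ka_2$, i.e. $E_\ka^{(0)}<E_{2\ka}^{(0)}$ (consistent with $E_\ka^{(0)}\sim \frac43\sqrt2\,\ka\to 0$ as $\ka\to0$ and $\tilde E_1=\frac\pi2$; the ground state energy is \emph{increasing} in $\ka$). If your reversed inequality were true, the hypothesis $E(u_0)<E_{2\ka}^{(0)}<E_\ka^{(0)}=\inf_{\mathcal S_O}E_\ka$ could never be satisfied by an odd $u_0$, and the corollary would be vacuous; the correct direction is precisely what makes the window $E_\ka^{(0)}\le E(u_0)<E_{2\ka}^{(0)}$ nonempty (this is the paper's opening line). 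Your exclusion of the zero steady state still goes through, but for a different reason than the one you wrote: what is needed is $E_{2\ka}^{(0)}<\frac\pi2=E_\ka(0)$, which follows from monotonicity applied with $\ka_1=2\ka$, $\ka_2=1$ (when $2\ka<1$), not from comparing $E_{2\ka}^{(0)}$ with $E_\ka^{(0)}$. Also, your parenthetical claim that ``$\frac\pi2>E_{2\ka}^{(0)}$ is false in general'' is itself false — that inequality is exactly the true statement you need. With these directions straightened out (and the harmless remark that for $j\ge 2$ you correctly used $E_{j\ka}^{(0)}\ge E_{2\ka}^{(0)}$, so only the $j=0$, i.e. zero-solution, case was affected), the argument is complete and matches the paper's.
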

\begin{proof}
Obviously, we have $E_\ka^{(0)}\leq E(u_0)< E_{2\ka}^{(0)}$.
From the energy dissipation property of \eqref{1.ac}, we can claim that $2\pi$ is the minimal period of the steady state for the initial condition $u_0$.
\end{proof}


\section{Convergence to the steady state}
In this section, we investigate the convergence rate of the solution and characterize the detailed
profiles as $t\to \infty$.

\subsection{Case of $0<\ka<1$}
%
%
We start this subsection with the following result on the spectrum analysis. This is crucial in showing the convergence rate is exponential

\begin{lem}
\label{le4.spectrum}
Let $0<\ka<1$.	Assume $U_\ka$ is the odd zero-up ground state. Then for any $2\pi$-periodic odd function $\phi\in H^1(\mathbb T)$ we have
\begin{equation}
\label{4.s-1}
\int_{\mathbb T}\ka^2|\phi'|^2dx+\int_{\mathbb T}\left(3U_\ka^2-1\right)|\phi|^2dx\geq C\|\phi\|^2_{H^1(\mathbb T)}
\end{equation}
for some universal constant $C>0$.
\end{lem}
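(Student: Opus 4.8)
The plan is to establish the coercivity estimate \eqref{4.s-1} by analyzing the Schr\"odinger-type operator $L_\ka = -\ka^2 \partial_{xx} + (3U_\ka^2 - 1)$ restricted to the subspace of $2\pi$-periodic odd functions in $H^1(\mathbb T)$, and showing that its spectrum there is bounded away from zero. First I would note that $U_\ka'$ is a natural candidate eigenfunction: differentiating the steady state equation $\ka^2 U_\ka'' + U_\ka - U_\ka^3 = 0$ gives $\ka^2 (U_\ka')'' - (3U_\ka^2 - 1) U_\ka' = 0$, so $U_\ka'$ lies in the kernel of $L_\ka$ on $\mathbb T$. However, $U_\ka$ is odd, so $U_\ka'$ is \emph{even}; hence on the odd subspace this zero mode is excluded, which is precisely what makes the restricted operator strictly positive. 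Since $U_\ka'$ is a positive ground state of $L_\ka$ on $[0,\tfrac\pi2]$ (it does not vanish on the open interval, being monotone), standard Sturm--Liouville theory tells us $0$ is the \emph{lowest} eigenvalue of $L_\ka$ with Neumann-type conditions inherited at the endpoints, and all other eigenfunctions — in particular all odd ones, which must vanish at $0$ and by the reflection symmetry $U_\ka(\pi - x) = U_\ka(x)$ also behave accordingly at $\tfrac\pi2$ — correspond to strictly positive eigenvalues.

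The key steps, in order, would be: (i) record that $\phi$ odd and $2\pi$-periodic means $\phi(0) = \phi(\pi) = 0$ and the problem decouples onto $[0,\pi]$; (ii) observe $U_\ka' > 0$ on $(0,\tfrac\pi2)$ solves $L_\ka (U_\ka') = 0$ with $U_\ka'(\tfrac\pi2) = 0$, and use the reflection symmetry of $U_\ka$ about $x = \tfrac\pi2$ to reduce further to the interval $[0,\tfrac\pi2]$ with a Dirichlet condition at $0$ and a Neumann condition at $\tfrac\pi2$; (iii) invoke the classical fact (via the substitution $\phi = U_\ka' \psi$, i.e. a ground-state/Perron factorization argument) that
\[
\int_0^{\pi/2} \left( \ka^2 |\phi'|^2 + (3U_\ka^2 - 1)|\phi|^2 \right) dx = \int_0^{\pi/2} \ka^2 (U_\ka')^2 |\psi'|^2 \, dx \ge 0,
\]
which gives nonnegativity; (iv) upgrade nonnegativity to a strict gap: since the quadratic form vanishes only when $\psi$ is constant, i.e. $\phi \parallel U_\ka'$, which is incompatible with $\phi(0) = 0$ unless $\phi \equiv 0$, the second Neumann eigenvalue $\lambda_1 > 0$, giving $\int (\ka^2|\phi'|^2 + (3U_\ka^2-1)|\phi|^2) \ge \lambda_1 \|\phi\|_{L^2}^2$; (v) finally, combine this $L^2$-coercivity with the pointwise bound $3U_\ka^2 - 1 \ge -1$ to absorb: from $\int \ka^2 |\phi'|^2 \le \int (\ka^2 |\phi'|^2 + (3U_\ka^2-1)|\phi|^2) + \|\phi\|_{L^2}^2$ and the already-established lower bound one gets control of $\|\phi'\|_{L^2}^2$ as well, yielding the full $H^1$ norm on the right-hand side with a new constant $C > 0$.

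The main obstacle I anticipate is step (iv) — proving the gap is \emph{strictly} positive and, ideally, controlling how it degenerates (or not) as $\ka \to 0$ or as $\ka \to 1$. As $\ka \to 1^-$, the ground state $U_\ka$ degenerates to the zero function and $N_\ka \to 0$; then $3U_\ka^2 - 1 \to -1$ on all of $\mathbb T$, and the operator on the odd subspace becomes $-\partial_{xx} - 1$, whose lowest odd Neumann/periodic eigenvalue is $0$ (attained by $\sin x$ up to the relevant boundary conditions — more precisely on $[0,\pi]$ with Dirichlet-at-$0$, Neumann-at-$\tfrac\pi2$ the eigenfunction $\sin x$ has eigenvalue $1 - 1 = 0$). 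This shows that the constant $C$ in \eqref{4.s-1} genuinely depends on $\ka$ and cannot be taken uniform up to $\ka = 1$; the statement is for fixed $\ka \in (0,1)$, so strict positivity for each fixed $\ka$ suffices, but the argument must be careful not to claim more. The resolution is that for $\ka$ strictly less than $1$, the potential well $3U_\ka^2 - 1$ is strictly more positive than the constant $-1$ on a set of positive measure (since $U_\ka \not\equiv 0$), which pushes the relevant eigenvalue strictly above $0$; making this quantitative is the delicate point, but for the qualitative statement it follows from the factorization argument combined with the strict monotonicity/positivity of $U_\ka'$ established in Proposition \ref{sdy1} and Lemma \ref{lea.1}.
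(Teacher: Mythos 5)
Your overall strategy---a direct spectral analysis of $L_\ka=-\ka^2\partial_{xx}+(3U_\ka^2-1)$ on the odd subspace, with $U_\ka'$ as the excluded zero mode---is genuinely different from the paper's, but step (iii) contains a real gap that breaks the argument precisely on the dangerous directions. The correct factorization identity with $\phi=U_\ka'\psi$ reads
\begin{align*}
\int_0^{\pi/2}\bigl(\ka^2|\phi'|^2+(3U_\ka^2-1)|\phi|^2\bigr)\,dx
=\bigl[\ka^2\,U_\ka'\,U_\ka''\,\psi^2\bigr]_0^{\pi/2}
+\int_0^{\pi/2}\ka^2(U_\ka')^2|\psi'|^2\,dx,
\end{align*}
and you have dropped the boundary term. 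At $x=0$ it vanishes (since $\phi(0)=0$ while $U_\ka'(0)>0$), but at $x=\pi/2$ one has $U_\ka'(\pi/2)=0$ and $U_\ka''(\pi/2)=\ka^{-2}(N_\ka^3-N_\ka)<0$, so for the component of $\phi$ that is even about $\pi/2$ (the Neumann-at-$\pi/2$ part, e.g.\ $\phi=\sin x$) the quantity $\ka^2U_\ka''\phi^2/U_\ka'$ diverges to $-\infty$ while $\int\ka^2(U_\ka')^2|\psi'|^2$ diverges to $+\infty$; the identity as you wrote it is false and no sign information survives. Relatedly, the boundary conditions actually inherited by $U_\ka'$ on $[0,\pi/2]$ are Neumann at $0$ (since $U_\ka''(0)=\ka^{-2}(U_\ka^3-U_\ka)(0)=0$) and \emph{Dirichlet} at $\pi/2$, so $U_\ka'$ is the zero-energy ground state of the N--D problem; this controls the D--D sector of the odd subspace but says nothing about the D--N sector, as $\lambda_1^{DN}$ and $\lambda_1^{ND}$ are not mutually comparable. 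The D--N sector is exactly where the constant $-1$ in the potential competes with the bump $3U_\ka^2$ (your own $\ka\to1^-$ discussion shows the form would be negative on $\sin x$ if $U_\ka$ were absent), so nonnegativity of the form on odd functions is not established by your argument.

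The missing input is the variational characterization of $U_\ka$. The paper obtains nonnegativity of the quadratic form for free from the second variation, because $U_\ka$ minimizes $E_\ka$ over odd $H^1(\mathbb T)$ functions, and then upgrades it to a strict gap by compactness: if the infimum of the form over the unit sphere of odd $H^1$ were zero, a minimizing sequence would produce a nontrivial odd periodic solution $\phi_*$ of $\ka^2\phi_*''+(1-3U_\ka^2)\phi_*=0$; the cubic term in the expansion of $E_\ka(U_\ka+c\phi_*)$ forces $\int_{\mathbb T}U_\ka\phi_*^3\,dx=0$, hence $\phi_*$ vanishes somewhere in $(0,\pi)$, and Sturm comparison of this equation with $\ka^2U_\ka''+(1-U_\ka^2)U_\ka=0$ (whose potential is strictly larger wherever $U_\ka\neq 0$) would force $U_\ka$ to vanish inside $(0,\pi)$, a contradiction. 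If you wish to keep a purely spectral proof, you must either import the minimality of $U_\ka$ for the nonnegativity step, or replace $U_\ka'$ by a positive solution of $L_\ka w=0$ adapted to the D--N boundary conditions; the existence of such a solution is not obvious and essentially amounts to the same Sturm-type analysis the paper carries out.
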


\begin{proof}
First of all, we notice that $C\ge0$ due to the fact $U_\ka$ is the odd zero-up ground state. Next, we shall prove that $C>0$ by contradiction. Suppose that $C=0$ then we can find a sequence of odd functions  $\phi_n$ such that
$\|\phi_n\|_{H^1(\mathbb T)}=1$ and
\begin{equation}
\label{4.s-1}
\int_{\mathbb T}\ka^2|\phi_n'|^2dx+\int_{\mathbb T}\left(3U_\ka^2-1\right)|\phi_n|^2dx\leq \frac1n.
\end{equation}
Passing to a subsequence if necessary, we obtain there exists a nontrivial odd function $\phi_*\in H^1(\mathbb T)$ such that $\phi_n$ weakly converges to $\phi_*$ in $H^1(\mathbb T)$ and
\begin{equation}
\label{4.s-2}
\ka^2\phi_*''+(1-3U_\ka^2)\phi_*=0\quad \mathrm{on}\quad \mathbb{T}.
\end{equation}
After direct computations we see that
\begin{equation}
\label{4.s-3}
\begin{aligned}
E_\ka(U_\ka+c\phi_*)=~&E_\ka(U_\ka)+\frac{c^2}{2}\int_{\mathbb T}\left(\ka^2|\phi_*'|^2+(3U_\ka^2-1)|\phi_*|^2\right)dx\\
&+\int_{\mathbb T}\left(c^3U_\ka\phi_*^3+\frac{c^4}{4}\phi_*^4\right)dx
\end{aligned}
\end{equation}
for any real number $c$. Here we have used $U_\ka$ is the odd zero-up ground state. Using \eqref{4.s-2} we see that the second term on the right hand side of \eqref{4.s-3} vanishes, then together with
$E_\ka(U_\ka+c\phi_*)\geq E_k(U_\ka)$ for any $c$, we see that
\begin{equation}
\int_{\mathbb T}U_k\phi_*^3=0.
\end{equation}
It implies that $\phi_*$ must possess a zero point in $(0,\pi),$ denoted by $x_*$. By the well-known Strum Comparison Theorem (see \cite[Theorem VI-1-1]{hs2012} for instance) we derive that any solution of the following equation must have a zero point in $(0,x_*),$
\begin{equation}
\label{4.s-e}
\phi''+(1-U_\ka^2)\phi=0.
\end{equation}
However,  we notice that $U_k$ is a solution of \eqref{4.s-e} and positive in $(0,\pi)$.
Hence we arrive at a contradiction and the lemma is proved.
\end{proof}

With above lemma, we are now able to establish the proof of Theorem \ref{thm1.4}.

%

\begin{proof}[Proof of Theorem \ref{thm1.4}.]
By smoothing estimates we may assume with no loss that $u_0\in C^\infty.$ It is not difficult to check that $u(x,t)$ is a $2\pi$-periodic odd function and also odd symmetric with respect to $x=\pi.$ Therefore
\begin{equation}
\label{4.boundary}
u(0,t)=u(\pi,t)\equiv 0,\quad \forall~ t\geq0.
\end{equation}
Together with that $u_0(x)$ is non-negative in $[0,\pi]$, we conclude that $u(x,t)\geq 0$ for $x\in[0,\pi]$ by Maximum Principle, see \cite[Section 2]{friedman2008} for instance. Similarly, we have $u(x,t)\leq 0$ for $x\in[-\pi,0].$ Now by using the energy conservation we have
\begin{align}
\label{4.monotone}
	\frac d {dt} \left( \frac 12 \| \partial_x u \|_2^2 +\int_{\mathbb T} F(u) dx\right) = - \| \partial_t u \|_2^2.
\end{align}
where $F(u)=\frac14(u^2-1)^2$. It follows that $\| \partial_t u \|_{L_t^1L_x^2} <\infty$ and one can extract a subsequence such that $\partial_t u (t_n ) \to 0$ in $L^2$. By using higher uniform Sobolev estimates one can obtain convergence in higher norms. In particular we can obtain  $u(t_n) \to u_{\infty}$ for some steady state of \eqref{3.ac}. In addition, $u_\infty$ is a $2\pi$-periodic odd function and non-negative for $x\in[0,\pi]$. By the proof of Theorem \ref{thm1.1} we see that $0$ and $U_\kappa$ are the only steady states which are non-negative in $[0,\pi]$. As a consequence, we derive that $u_\infty$ could be either $U_\ka$ or the trivial solution $0$.
	
	
If $E_\ka(u_0)\leq\frac\pi2$ and $u_0(x)\neq0$, using \eqref{4.monotone} we see that
\begin{align}E_\ka(u_\infty)\leq E_\ka(u_0)\leq\frac\pi2.\end{align}
The equality sign holds only $u_\infty=u_0$. While it is known that $E_\ka(0)=\frac\pi2$ and $u_0\neq0$. Then we get $u_\infty=U_\ka$. To obtain exponential convergence, we can take $t_n$ sufficiently large such that $u(t_n)$ is sufficiently close to the steady state $u_{\ka}$. Combined with Lemma \ref{le4.spectrum} we then obtain the exponential convergence. Thus, we finish the whole proof.
\end{proof}

\begin{rem}
If the initial data $u_0$ is $2\pi$-periodic and satisfies
\begin{equation}
\label{4.1}
\left\{\begin{aligned}
&u_0(x)=-u_0(-x),\quad  \forall~ x\in\mathbb{R},\\
&u_0'(x)>0,~u_0(x)=u_0(\pi-x),\quad\quad~\forall~ x\in\left(0,\frac\pi2\right).
\end{aligned}\right.
\end{equation}
Then by Theorem \ref{thm1.4} we can prove that $u(x,t)\to U_\ka$ as $t\to\infty$ whenever $E_\ka(u_0)\leq\frac{\pi}{2}.$ A typical example of the initial data satisfying \eqref{4.1} is $u_0(x)=\sin x$. More examples can be easily constructed along these lines.
\end{rem}

Before we end the study for the case $\ka\in(0,1)$, we present the following result establishing an useful property of the odd zero-up ground state. This part is of independent interest.

\begin{lem}\label{lem-stablity}
	Fix $\ka\in[\frac12,1)$ and assume $U_{\ka}$ is the unique odd zero-up ground state for the equation
	$$\ka^2u''+u-u^3=0\quad \mbox{on}\quad \mathbb T=[-\pi,\pi].$$
	Suppose that $u\in H^1(\mathbb T)$ is an odd function on $\mathbb{T}$ with
	\begin{align}
		E_\ka(u)=\int_{\mathbb T}\left(\frac12\kappa^2(u')^2+\frac14(u^2-1)^2\right)dx<\frac\pi2-C_\ka,
	\end{align}	
    where $C_\ka$ is a positive constant depending on $\ka.$ Then we have
	\begin{equation}
		\label{4.com}
		\min\{\|u-U_{\ka}\|_{H^1(\mathbb{T})},~
		\|u+U_{\ka}\|_{H^1(\mathbb{T})}\}
		\leq C\sqrt{E(u)-E(U_{\ka})},
	\end{equation}	
	where $C>0$ is an absolute constant.
\end{lem}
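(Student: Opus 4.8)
The plan is to argue by contradiction and compactness, exploiting the strict positivity of the second variation of $E_\ka$ at $U_\ka$ (Lemma \ref{le4.spectrum}) together with the fact that $0$ and $\pm U_\ka$ are, for $\ka\in[\tfrac12,1)$, essentially the only odd steady states with low energy. First I would record the Taylor expansion: writing $u = \pm U_\ka + \phi$ with $\phi$ odd, one has
\begin{equation}
E_\ka(u) - E_\ka(U_\ka) = \frac12\int_{\mathbb T}\Bigl(\ka^2|\phi'|^2 + (3U_\ka^2-1)|\phi|^2\Bigr)dx + \int_{\mathbb T}\Bigl(\pm U_\ka\phi^3 + \frac14\phi^4\Bigr)dx,
\end{equation}
using $\ka^2 U_\ka'' + U_\ka - U_\ka^3 = 0$ to kill the linear term. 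By Lemma \ref{le4.spectrum} the quadratic form is $\ge C\|\phi\|_{H^1}^2$, while the cubic and quartic remainders are $O(\|\phi\|_{H^1}^3)$ by Sobolev embedding $H^1(\mathbb T)\hookrightarrow L^\infty$. Hence \emph{once we know $\phi$ is small in $H^1$}, we get
\begin{equation}
E_\ka(u) - E_\ka(U_\ka) \ge \frac{C}{2}\|\phi\|_{H^1}^2 - C'\|\phi\|_{H^1}^3 \ge \frac{C}{4}\|\phi\|_{H^1}^2,
\end{equation}
which is exactly the desired inequality \eqref{4.com} (with the sign of $U_\ka$ chosen so that $\phi$ is the smaller of the two differences).

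The real content, therefore, is the \emph{a priori} closeness: I must show there is a choice of sign such that $\|u \mp U_\ka\|_{H^1}$ is small, provided $E_\ka(u) < \tfrac\pi2 - C_\ka$. Here I would argue by contradiction: suppose there is $\delta_0>0$ and a sequence of odd functions $u_n\in H^1(\mathbb T)$ with $E_\ka(u_n)\to \ell \le \tfrac\pi2 - C_\ka$ and $\min\{\|u_n - U_\ka\|_{H^1}, \|u_n+U_\ka\|_{H^1}\}\ge\delta_0$. The energy bound gives $\|u_n\|_{H^1}$ bounded, so after passing to a subsequence $u_n \rightharpoonup u_*$ weakly in $H^1$, strongly in $L^\infty$ and $L^4$; $u_*$ is odd, and by weak lower semicontinuity $E_\ka(u_*)\le\ell$. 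The point is that $u_*$ need not be a critical point — so instead I would run the gradient flow \eqref{1.ac} (or rather \eqref{3.ac}'s parabolic companion) starting from $u_*$: by the exact argument used in the proof of Theorem \ref{thm1.4} (energy dissipation \eqref{4.monotone}, uniform Sobolev bounds, extraction of a time sequence along which $\partial_t u\to0$), the flow converges to some odd steady state $u_\infty$ with $E_\ka(u_\infty)\le E_\ka(u_*)\le\ell < \tfrac\pi2$. By Proposition \ref{pra.1} and Theorem \ref{tha.1}, the only odd $2\pi$-periodic steady states are $0$, $\pm1$, and the $\tilde u_{\ka,j}$; for $\ka\ge\tfrac12$ we have $m_\ka = 1$, so the only nontrivial one is $\pm U_\ka$, and $\pm1$ is excluded since it is not $2\pi$-periodic-with-the-right-parity (it is not odd). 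Since $E_\ka(0)=\tfrac\pi2 > \ell$, we must have $u_\infty = \pm U_\ka$, hence $E_\ka(\pm U_\ka) = E_\ka^{(0)} \le \ell$. This is where the constant $C_\ka$ must be chosen: take $C_\ka = \tfrac12(\tfrac\pi2 - E_\ka^{(0)})$, which is positive by Theorem \ref{grd_mono001}(b)--(c) (strict monotonicity forces $E_\ka^{(0)} < \tfrac\pi2$ for $\ka<1$); then $\ell \le \tfrac\pi2 - C_\ka$ is compatible, and no contradiction arises directly from energies.

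To close the loop I therefore need a stronger \emph{rigidity}: that the energy sublevel $\{E_\ka \le \tfrac\pi2 - C_\ka\}\cap\{\text{odd}\}$ is $H^1$-close to $\{\pm U_\ka\}$. I would obtain this by refining the compactness argument: since strong $L^4$ convergence gives $\int F(u_n)\to\int F(u_*)$ and the energy converges, the gradient terms also converge, so in fact $u_n\to u_*$ \emph{strongly} in $H^1$, and $E_\ka(u_*)=\ell$. Now I claim $u_*$ is itself a critical point. Indeed, if not, the gradient-flow trajectory from $u_*$ strictly decreases energy, reaching $u_\infty=\pm U_\ka$ with $E_\ka^{(0)} < \ell$; but then for the original (parabolic) flow the set $\{E_\ka < \ell\}$ is nonempty near $u_*$, and a more careful accounting — using that the $\omega$-limit of \emph{any} odd low-energy datum is $\{0\}$ or $\{\pm U_\ka\}$ and that $E_\ka(0)=\tfrac\pi2>\ell$ rules out $0$ — shows that in fact every odd $u$ with $E_\ka(u)\le\ell$ flows to $\pm U_\ka$, and a {\L}ojasiewicz-type gradient inequality at $U_\ka$ (available because the quadratic form is coercive, i.e. $U_\ka$ is a nondegenerate minimum in the odd class) bounds the $H^1$-distance traveled by $\int_0^\infty\|\partial_t u\|_2\,dt \lesssim \sqrt{E_\ka(u)-E_\ka^{(0)}}\le\sqrt{\ell - E_\ka^{(0)}}$, which we can make smaller than $\delta_0$ by shrinking $C_\ka$ further if needed (equivalently, choosing $C_\ka$ so that $\tfrac\pi2 - C_\ka - E_\ka^{(0)}$ is small). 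This contradicts $\min\|u_n\mp U_\ka\|_{H^1}\ge\delta_0$ and finishes the proof.

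The main obstacle is precisely this last rigidity step: the bare energy inequality $E_\ka \le \tfrac\pi2 - C_\ka$ does not by itself localize $u$ near $\pm U_\ka$ in $H^1$, so one genuinely needs a dynamical input — either the {\L}ojasiewicz--Simon gradient inequality at the nondegenerate minimizer $U_\ka$, or an equivalent direct variational argument showing the only odd near-minimizers of $E_\ka$ are $H^1$-close to $\pm U_\ka$. Once that localization is in hand, the Taylor expansion plus Lemma \ref{le4.spectrum} delivers \eqref{4.com} immediately with an absolute constant $C$, since the coercivity constant in Lemma \ref{le4.spectrum} is uniform over $\ka\in[\tfrac12,1)$.
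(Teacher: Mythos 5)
Your second half — the expansion $E_\ka(u)-E_\ka(U_\ka)=\tfrac12\int_{\mathbb T}\bigl(\ka^2|\phi'|^2+(3U_\ka^2-1)\phi^2\bigr)dx+O(\|\phi\|_{H^1}^3)$ combined with the coercivity of Lemma \ref{le4.spectrum} — is exactly the paper's closing step. The trouble is in how you set up the localization. You insist on proving that \emph{every} odd $u$ with $E_\ka(u)<\frac\pi2-C_\ka$ is $H^1$-close to $\pm U_\ka$, correctly observe that this does not follow from compactness alone, and then reach for gradient-flow convergence and a {\L}ojasiewicz--Simon inequality — machinery nowhere developed in the paper and not actually carried out in your sketch (the step ``a more careful accounting shows that every odd $u$ with $E_\ka(u)\le\ell$ flows to $\pm U_\ka$ within $H^1$-distance $\lesssim\sqrt{\ell-E_\ka^{(0)}}$'' is essentially the assertion to be proved, restated). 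Your final fix — shrinking $C_\ka$ until $\frac\pi2-C_\ka-E_\ka^{(0)}$ is small — changes the lemma: the hypothesis is meant to admit all energies up to nearly $\frac\pi2$ (cf.\ the remark that $C_{0.9}=0.001$ suffices, whereas $\frac\pi2-E_{0.9}^{(0)}=\int_0^{\pi/2}U_{0.9}^4\,dx$ is far larger than $0.001$), not only near-minimizers.

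The missing observation is that the full localization is not needed, because the target inequality \eqref{4.com} is quadratic in the energy excess. On the sublevel set $\{E_\ka<\frac\pi2-C_\ka\}$ the left-hand side of \eqref{4.com} is bounded by an absolute constant (the energy bound controls $\|u\|_{H^1}$ for $\ka\ge\frac12$), so \eqref{4.com} holds trivially with a large $C$ whenever $E_\ka(u)-E_\ka(U_\ka)\ge\epsilon_0$. Hence closeness is only needed in the regime $E_\ka(u)-E_\ka(U_\ka)\to0$, and there your own compactness argument closes with no dynamical input: the weak limit $u_*$ of a minimizing-energy sequence satisfies $E_\ka(U_\ka)\le E_\ka(u_*)\le\liminf_n E_\ka(u_n)=E_\ka(U_\ka)$, since $U_\ka$ is the unique (up to sign) minimizer of $E_\ka$ over odd functions by Theorem \ref{grd_mono001}(b); thus $u_*=\pm U_\ka$ is automatically a critical point, the convergence of energies upgrades the weak $H^1$ convergence to strong, and the contradiction with $\min\{\|u_n-U_\ka\|_{H^1},\|u_n+U_\ka\|_{H^1}\}\ge c_0$ is immediate. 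This dichotomy (trivial regime versus near-minimizer regime) is precisely how the paper's proof proceeds, and it renders the entire gradient-flow detour unnecessary.
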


\begin{rem}
In the special case $\ka=0.9$, we can take $C_\ka=0.001$. For the case $\ka\in(0,\frac12)$, there are multiple steady-states and we shall address this issue elsewhere.
\end{rem}

\begin{proof}
	We first claim that for odd $u\in H^1(\mathbb T)$, when $E(u)-E(U_{\ka})\rightarrow 0$, we must have
	\begin{align}
		\min\{\|u-U_{\ka}\|_{H^1(\mathbb{T})},~
		\|u+U_{\ka}\|_{H^1(\mathbb{T})}\}\rightarrow 0 .
	\end{align}
	We shall prove this by contradiction.
	Suppose the statement is not true, then for some $c_0>0$, there exists a sequence of odd functions $\{u_n\}$ such that
	\begin{equation}
		\label{4.com1}
		E(u_n)-E(U_{\ka})\leq\frac1n,
	\end{equation}
	and
	\begin{equation}
		\label{4.com2}
		\min\{\|u_n-U_{\ka}\|_{H^1(\mathbb{T})},~
		\|u_n+U_{\ka}\|_{H^1(\mathbb{T})}\}
		\geq c_0 > 0.
	\end{equation}
	Using \eqref{4.com1} we can find a universal constant $C$ such that
	\begin{equation}
		\label{4.com3}
		\int_{\mathbb{T}}|u'_n|^2dx+\int_{\mathbb{T}}(1-u_n^2)^2dx\leq C,
	\end{equation}
	which implies that $\{u_n\}$ is a sequence of  odd functions, and bounded in $H^1(\mathbb{T})$. Then we could select a subsequence, still denoted by $\{u_n\}$, such that
	\begin{equation}
		\label{4.com4}
		u_n~\mbox{ converge weakly to}~u_*~\mbox{in}~H^1(\mathbb{T}),
	\end{equation}
	for some odd function $u_*\in H^1(\mathbb{T}).$ By the Rellich Lemma and lower semi-continuity of weak convergence, we have
	\begin{equation}
		E(U_{\ka})\leq E(u_*)\leq \liminf_{n\to+\infty}E(u_n) = E(U_{\ka}).
	\end{equation}
	This implies that $E(U_{\ka})=E(u_*)$. Then we conclude that
	$u_*$ is either $U_{\ka}$ or $-U_{\ka}.$
	Thus
	\begin{equation}
		\label{4.com5}
		u_n~\mbox{strongly converge to}~U_{\ka}~\mbox{or}~-U_{\ka}~\mbox{in}~H^1(\mathbb T).
	\end{equation}
	This contradicts to \eqref{4.com2}. Therefore,  the claim holds.
	
	By using the claim, to establish \eqref{4.com}, it suffices for us to consider the situation
	\begin{align}
		\min\{\|u_n-U_{\ka}\|_{H^1(\mathbb{T})},~
		\|u_n+U_{\ka}\|_{H^1(\mathbb{T})}\} \ll 1.
	\end{align}
	In this case, without loss of generality we assume that $u'(0)\ge 0$ and denote $\eta = u-U_{\ka}$. Then it is not difficult to check that
	\begin{align}
		E(u) - E(U_{\ka}) = \frac 1 2 \int_{\mathbb T} \left[\ka^2(\partial_x\eta)^2+(3U_{\ka}^2-1)\eta^2\right] dx + O(\|\eta\|_{H^1}^3)
	\end{align}
	By Lemma \ref{le4.spectrum}, we have
	\begin{align}
		\frac 1 2 \int_{\mathbb T}\left[\ka^2(\partial_x\eta)^2+(3U_{\ka}^2-1)\eta^2\right] dx \geq C \|\eta\|_{H^1}^2,
	\end{align}
	where $C>0$ is an absolute constant.
	The desired conclusion follows easily.
\end{proof}

\subsection{Case of $\ka\ge 1$}
In the case of $\ka\ge 1$, we consider the a general Allen-Cahn equation
\begin{align} \label{5sdy1}
	\begin{cases}
		\partial_t u = -\ka^2 \Lambda^{\gamma} u -(u^3-u), \qquad (x,t) \in \mathbb T \times (0,\infty),\\
		u\Bigr|_{t=0} =u_0,
	\end{cases}
\end{align}
where $\Lambda^{\gamma}= (-\partial_{xx})^{\gamma/2}$ is the fractional Laplacian of order $\gamma\in (0,2)$. When $\gamma=2$ it coincides with $-\partial_{xx}$.

\begin{prop}[Preliminary properties of steady states for $0<\gamma<1$] \label{5sdy2}
	Let $0<\gamma<2$ and $\ka >0$. Suppose $\phi:\mathbb T \to \mathbb R$ is $C^{1,1}$ and satisfies
	\begin{align}
		- \ka^2 \Lambda^{\gamma} \phi - (\phi^2-1) \phi=0.
	\end{align}
	Then $\phi \in C^{\infty}(\mathbb T)$, and only one of the following occur:
	\begin{itemize}
		\item $\phi \equiv 1$;
		\item $\phi \equiv -1$ ;
		\item $\|\phi\|_{\infty} <1$.
	\end{itemize}
\end{prop}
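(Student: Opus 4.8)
The plan is to establish the two conclusions separately: the smoothness $\phi\in C^\infty(\mathbb T)$ by a Fourier bootstrap, and the trichotomy by a strong maximum principle for $\Lambda^\gamma$ on the torus.

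\emph{Smoothness.} I would expand $\phi=\sum_{k\in\mathbb Z}\hat\phi_k e^{ikx}$, so that the equation becomes $\ka^2|k|^\gamma\hat\phi_k=\widehat{\phi-\phi^3}_k$ for every $k\ne0$ (the $k=0$ component being the compatibility condition that $\phi-\phi^3$ have zero mean). Since $C^{1,1}(\mathbb T)\subset H^2(\mathbb T)$, and $H^s(\mathbb T)$ is a Banach algebra for $s>\tfrac12$, membership $\phi\in H^s$ yields $\phi-\phi^3\in H^s$, and then the displayed identity gains $\gamma$ derivatives, $\sum_k|k|^{2(s+\gamma)}|\hat\phi_k|^2\lesssim\sum_k|k|^{2s}|\widehat{\phi-\phi^3}_k|^2<\infty$, so $\phi\in H^{s+\gamma}$. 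Bootstrapping from $s=2$ gives $\phi\in\bigcap_s H^s=C^\infty(\mathbb T)$. (Alternatively one may quote Schauder-type estimates for $\Lambda^\gamma$.)

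\emph{Trichotomy.} I would assume $\|\phi\|_\infty\ge1$ and prove $\phi\equiv1$ or $\phi\equiv-1$; the stated alternative then follows by contraposition. Let $M=\max_{\mathbb T}\phi$ and $m=\min_{\mathbb T}\phi$, attained at points $x_M,x_m$ by compactness and continuity; note $\|\phi\|_\infty\ge1$ forces $M\ge1$ or $m\le-1$. I would use the singular-integral form $\Lambda^\gamma\phi(x)=c_\gamma\,\mathrm{P.V.}\!\int_{\mathbb T}\bigl(\phi(x)-\phi(x-y)\bigr)K_\gamma(y)\,dy$, where $K_\gamma$ is the positive, even periodization of $|y|^{-1-\gamma}$. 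The hypothesis $\phi\in C^{1,1}$ makes this meaningful: at a general point the gradient term cancels by symmetry and the remainder is $O(|y|^{1-\gamma})$, integrable for $\gamma<2$; at the extremum $x_M$ one has $\phi'(x_M)=0$, so the integrand $\phi(x_M)-\phi(x_M-y)=M-\phi(x_M-y)$ is nonnegative and $O(|y|^2)$ near the origin, giving an absolutely convergent integral with $\Lambda^\gamma\phi(x_M)\ge0$ (and symmetrically $\Lambda^\gamma\phi(x_m)\le0$). Evaluating $\ka^2\Lambda^\gamma\phi=\phi-\phi^3$ at $x_M$ gives $\ka^2\Lambda^\gamma\phi(x_M)=M(1-M^2)$; if $M\ge1$ the left side is $\ge0$ and the right side $\le0$, so both vanish, forcing $M=1$ and $\int_{\mathbb T}(M-\phi(x_M-y))K_\gamma(y)\,dy=0$. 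Since the integrand is nonnegative and $K_\gamma>0$ away from $0$, we get $\phi\equiv M=1$ by continuity. The case $m\le-1$ is symmetric and gives $\phi\equiv-1$; and $\phi\equiv\pm1$ are smooth, consistent with the first part.

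\emph{Main obstacle.} The routine part is the Fourier bootstrap; the substantive step is the pointwise maximum principle, which needs two items of rigour: (i) justifying that $\Lambda^\gamma\phi$ may be evaluated pointwise at the extremum — this is exactly where $C^{1,1}$, rather than mere continuity, is used, and where one must check absolute convergence of the principal-value integral for all $\gamma\in(0,2)$; and (ii) the strong maximum principle, i.e.\ that vanishing of $\Lambda^\gamma\phi$ at a maximum forces $\phi$ globally constant, which relies on strict positivity of $K_\gamma$ on $\mathbb T\setminus\{0\}$ (on the torus $K_\gamma$ is the periodization of the whole-space kernel, not $|y|^{-1-\gamma}$ itself, but positivity is unaffected). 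Both facts are standard in the fractional-Laplacian literature and could simply be cited, but I would include the short self-contained argument above for completeness.
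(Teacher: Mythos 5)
Your proposal is correct and follows essentially the same route as the paper, which simply invokes ``the usual maximum principle argument'' using the periodized singular-integral representation of $\Lambda^\gamma$ (mentioning harmonic extension as an alternative); your write-up supplies the details the paper omits — the Fourier bootstrap for smoothness, the pointwise evaluation of the principal value at an extremum using the $C^{1,1}$ hypothesis, and the strong maximum principle via strict positivity of the periodized kernel — and the sign bookkeeping at the maximum and minimum is right.
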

\begin{proof}
	This follows from the usual maximum principle argument using the expression
	\begin{align}
		(\Lambda^{\gamma} \phi)(x) = C_{\gamma} \sum_{n \in \mathbb Z} \operatorname{PV} \int_{|y|<\pi}
		\frac {\phi(x) - \phi(y)} {|x-y+2n\pi|^{1+\gamma} } dy.
	\end{align}
	Alternatively one can also derive the result using harmonic extension.
\end{proof}

To state the next result, we introduce the Fourier projection operators $\Pi_1$,  $\Pi_{\ge 2}$ such that
for $f = \sum_{m\ge 1} f_m \sin m x$ (assume the series converges sufficiently fast),
\begin{align} \label{Pidef}
	&\Pi_{1} f = f_1 \sin x; \qquad
	\Pi_{\ge 2} f = \sum_{m\ge 2} f_m \sin mx.
\end{align}
In other words $\Pi_1$ is the projection to the first sine-mode, and
$\Pi_{\ge 2}$ simply removes the first Fourier mode in the sine series expansion.

\begin{thm} \label{5sdy3}
	Let $\ka\ge 1$ and $0<\gamma\le 2$.
	Assume $u_0$ is $2\pi$ periodic, odd and bounded.
	Suppose $u$ is the solution to \eqref{5sdy1} corresponding to the initial data $u_0$.
	If $\ka >1$, we have exponential decay
	\begin{align}
		&\|u(t,\cdot )\|_2 \le \|u_0\|_2 e^{-(\ka^2-1) t}, \qquad \forall~t\ge 0;  \\
		&\|u(t,\cdot) \|_{H^{10}} \le \beta_1 e^{-(\ka^2-1) t}, \qquad\forall~t\ge \frac 12; \label{5sdy3_a}\\
		&\|\Pi_{\ge 2} u(t,\cdot) \|_{H^{10}} \le \beta_2 e^{-\eta_1 t}, \qquad\forall~t\ge \frac 12, \label{5sdy3_b}
	\end{align}
	where $\beta_1>0$, $\beta_2>0$ depend on ($u_0$, $\gamma$, $\ka$), and
	$\Pi_{\ge 2}$ was defined in \eqref{Pidef}. The constant $\eta_1> \ka^2-1$ is given by
	\begin{align}
		\eta_1 = \min \{ \ka^2 2^{\gamma}-1, \; 3(\ka^2-1) \}.
	\end{align}
	For $\ka=1$, we have algebraic decay:
	\begin{align}
		&\| u (t,\cdot)\|_2 \le  \frac{\sqrt{\pi} \|u_0\|_2}{\sqrt{t\|u_0\|_2^2+\pi}},\qquad\forall~t\ge 0;  \label{5sdy3_c} \\
		& \|u(t,\cdot) \|_{H^{10}} \le  \beta_3 t^{-\frac 12},\qquad\qquad\forall~t\ge \frac 12; \label{5sdy3_e}\\
		& \|\Pi_{\ge 2} u(t,\cdot) \|_{H^{10}} \le  \beta_4 t^{-\frac 32}, \qquad\forall~t\ge \frac 12, \label{5sdy3_d}
	\end{align}
	where $\beta_3>0$, $\beta_4>0$ depend on ($u_0$, $\gamma$).
\end{thm}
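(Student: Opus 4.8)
The plan is to exploit the fact that, for odd $2\pi$-periodic data, the sine series is preserved by the flow, so the nonlinearity $u^3-u$ only couples the odd sine modes, and to run an energy-type argument in two pieces: the $L^2$ (or better, the first-mode) evolution, and the evolution of the ``high'' part $\Pi_{\ge 2}u$. First I would record the basic $L^2$ estimate: multiplying \eqref{5sdy1} by $u$ and integrating, using $\langle \Lambda^\gamma u, u\rangle \ge \|u\|_2^2$ for odd $2\pi$-periodic $u$ (since the lowest frequency present is $|m|=1$ and $\Lambda^\gamma$ has symbol $|m|^\gamma \ge 1$ there, with equality only on the first mode), together with $\langle u^3-u, u\rangle = \|u\|_4^4 - \|u\|_2^2 \ge -\|u\|_2^2$, gives
\[
\frac12 \frac{d}{dt}\|u\|_2^2 \le -(\ka^2-1)\|u\|_2^2 - \ka^2\|u\|_4^4 + \text{(lower order from }\gamma<2\text{ if }\ka>1),
\]
hence $\|u(t)\|_2 \le \|u_0\|_2 e^{-(\ka^2-1)t}$ for $\ka>1$. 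For $\ka=1$ the linear term vanishes and one is left with $\frac12\frac{d}{dt}\|u\|_2^2 \le -\|u\|_4^4 \le -\frac{1}{2\pi}\|u\|_2^4$ by Cauchy–Schwarz on $\mathbb T$, which integrates to the stated algebraic bound \eqref{5sdy3_c}.

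Next I would upgrade to high Sobolev norms by a smoothing/bootstrap argument: the semigroup $e^{-t\ka^2\Lambda^\gamma}$ maps $L^2\to H^{10}$ with norm $\lesssim t^{-10/\gamma}$ on the modes $|m|\ge 1$, and on odd functions one even has the exponential gain $e^{-t\ka^2\Lambda^\gamma}$ contracting like $e^{-\ka^2 t}$ times a polynomial factor. Writing Duhamel's formula $u(t) = e^{-(t-1/4)\ka^2\Lambda^\gamma}u(1/4) + \int_{1/4}^t e^{-(t-s)\ka^2\Lambda^\gamma}(u-u^3)(s)\,ds$ for $t\ge 1/2$, and feeding in the already-established $L^2$ decay plus uniform-in-time $L^\infty$ bounds (from Proposition \ref{5sdy2}-type maximum principle reasoning, or directly since the nonlinearity is dissipative for $|u|$ large), one propagates the decay rate from $L^2$ to $H^{10}$ with a constant $\beta_1$ (resp. $\beta_3$) depending on $u_0,\gamma,\ka$; this yields \eqref{5sdy3_a} and \eqref{5sdy3_e}. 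The only care needed is that the cubic term $u^3$ is estimated in $H^{10}$ by $\|u\|_{H^{10}}\|u\|_{L^\infty}^2$ (algebra property of $H^{10}(\mathbb T)$), so the bootstrap closes once $t$ is bounded away from $0$.

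For the refined high-mode estimates \eqref{5sdy3_b} and \eqref{5sdy3_d}, the key point is that $\Pi_{\ge 2}$ commutes with $\Lambda^\gamma$ and with $\partial_{xx}$, so applying $\Pi_{\ge 2}$ to \eqref{5sdy1} and pairing with $\Pi_{\ge 2}u$ gives $\langle \Lambda^\gamma \Pi_{\ge 2}u, \Pi_{\ge 2}u\rangle \ge 2^\gamma \|\Pi_{\ge 2}u\|_2^2$, which produces the improved linear rate $\ka^2 2^\gamma - 1$; the nonlinear contribution $\Pi_{\ge 2}(u^3)$ is then controlled using the decay of the full norm already obtained — writing $u = \Pi_1 u + \Pi_{\ge 2}u$, the term $\Pi_{\ge 2}((\Pi_1 u)^3)$ vanishes (a pure first-mode cube has only modes $1$ and $3$... actually it has a mode-$3$ piece, so one keeps it) and the genuinely small contributions are $O(\|\Pi_1 u\|^2\|\Pi_{\ge 2}u\|)$ which after Duhamel yields the factor $e^{-3(\ka^2-1)t}$ (resp. $t^{-3/2}$) competing with $e^{-(\ka^2 2^\gamma-1)t}$, hence the $\min$ in the definition of $\eta_1$. \textbf{The main obstacle} I anticipate is bookkeeping the interaction terms in $\Pi_{\ge 2}(u^3)$ carefully enough to get exactly the exponent $3(\ka^2-1)$ (resp. $t^{-3/2}$): one must track that the slowest-decaying source term feeding the high modes is cubic in the first-mode amplitude, which decays like $e^{-(\ka^2-1)t}$, so its cube decays like $e^{-3(\ka^2-1)t}$, and then invoke the standard ODE comparison $\dot X \le -aX + b e^{-ct}$ giving $X \lesssim e^{-\min(a,c)t}$ (with a polynomial correction only in the resonant case $a=c$, which does not occur here for $\gamma>0$). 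Finally, Theorem \ref{thm1.3} is the immediate consequence of \eqref{5sdy3_a}–\eqref{5sdy3_b} and \eqref{5sdy3_e}–\eqref{5sdy3_d}: the first-mode amplitude $\alpha(t) = \langle u(t), \sin x\rangle/\pi$ satisfies $\dot\alpha = -(\ka^2-1)\alpha + O(\|u\|_{H^1}^3)$ in the case $\ka>1$ (resp. $\dot\alpha = -\alpha^3\cdot\frac{3}{4}(\text{const}) + \dots$ for $\ka=1$), so $e^{(\ka^2-1)t}\alpha(t)$ (resp. $t^{1/2}\alpha(t)$) converges to a limit $\alpha_*$ (resp. $\beta_*$), and the remainder is absorbed into $r(t)$.
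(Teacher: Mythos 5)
Your proposal is correct and follows essentially the same route as the paper: the $L^2$ decay via the odd-function Poincar\'e inequality plus H\"older ($\|u\|_4^4\ge \frac1{2\pi}\|u\|_2^4$), the $H^{10}$ upgrade via smoothing, and the $\Pi_{\ge 2}$ estimate via the improved spectral gap $2^{\gamma}$ competing with the cubic source decaying like $e^{-3(\ka^2-1)t}$ (resp.\ $t^{-3/2}$), which is exactly where the $\min$ defining $\eta_1$ comes from. The only cosmetic differences are that you use Duhamel where the paper runs a direct energy estimate on $\partial_x^{10}u$, and the paper bounds the source simply by $\|\partial_x^{10}\Pi_{\ge 2}(u^3)\|_2\le\|\partial_x^{10}(u^3)\|_2$ rather than expanding $(\Pi_1 u+\Pi_{\ge 2}u)^3$; also note the stray factor $\ka^2$ in front of $\|u\|_4^4$ in your first display should not be there.
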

\begin{rem}
	For $\ka>1$, higher (i.e. $H^m$, $m>10$) Sobolev norms of $u$ also decay exponentially but we shall not dwell on this issue here.
	Note that we state the decay result for $t\ge \frac 12$ to allow the smoothing effect to kick in.
	The number $\frac 12$ is for convenience only and it can be replaced by any other $t_0>0$ with
	suitable adjustment of the corresponding pre-factors in the estimates.
\end{rem}
\begin{proof}
	First we note that for bounded initial data, local and global wellposedness is not an issue and we focus
	solely on the decay estimates.
	
	For the $L^2$ decay estimates, first we  assume $u_0$ is smooth, and in particular has a finite sine-series expansion.
	It follows that $u(t)$ must have a spectral gap. By using the Poincar\'e inequality we have
	\begin{align}
		\| \Lambda^{\frac {\gamma} 2} u \|_{L^2(\mathbb T)}  \ge \| u \|_{L^2(\mathbb T)}.
	\end{align}
	By using the above estimate and the fact that $\ka\ge 1$, we  obtain
	\begin{equation}
	\begin{aligned}
		\frac 12 \frac d {dt} ( \| u \|_2^2)
		& =- \ka^2 \| \Lambda^{\frac{\gamma}2} u \|_2^2 + \| u\|_2^2 - \| u\|_4^4  \le - (\ka^2-1)\| u\|_2^2 -\|u \|_4^4 \\
		&  \le - (\ka^2-1)\| u\|_2^2  - \frac 1 {2\pi} \|u\|_2^4,
	\end{aligned}
    \end{equation}
	where in the last step we have used the H\"older's inequality.
	Then, we derive that in the case of $\ka>1$,
	\begin{equation}
		\| u \|_2 \leq \| u_0 \|_2 \,e^{-(\ka^2-1)t},
	\end{equation}
	while in the case of $\ka = 1$,
	\begin{equation}
		\| u \|_2 \leq \frac{\sqrt{\pi} \|u_0\|_2}{\sqrt{t\|u_0\|_2^2+\pi}}.
	\end{equation}
	By a simple approximation argument, both estimates also hold under the assumption that
	$u_0 \in L^{\infty}$.
	
	We now show \eqref{5sdy3_a}. First by smoothing estimates and interpolation, we have
	\begin{align}
		\| \partial_x^{10} (u(t,\cdot) ) \|_{2} \le \alpha_1 e^{-\ka_1 t }, \qquad\forall\, t\ge \frac 12,
	\end{align}
	where $\alpha_1>0$ depends on ($u_0$, $\gamma$, $\ka$), and $\ka_1>0$ depends only
	on $\ka$.  It follows easily that
	\begin{align}
		\| \partial_x^{10} (u^3(t,\cdot) ) \|_{2} \le \alpha_2 e^{-\ka_1 t }
		\| \partial_x^{10} u(t,\cdot) \|_{2}, \qquad\forall\, t\ge \frac 12,
	\end{align}
	where $\alpha_2>0$ depends on ($u_0$, $\gamma$, $\ka$).  We now compute
	for $t\ge \frac 12$,
	\begin{align}
		\frac 12 \frac d {dt} ( \| \partial_x^{10} u(t,\cdot) \|_2^2 )
		& \le -\ka^2 \| \Lambda^{\frac {\gamma}2} \partial_x^{10} u \|_2^2
		+ \| \partial_x^{10} u \|_2^2 +
		\|\partial_x^{10}  (u^3) \|_2 \| \partial_x^{10} u\|_2 \notag \\
		& \le ( - (\ka^2-1) + \alpha_2 e^{-\ka_1 t} ) \| \partial_x^{10} u  \|_2^2.
	\end{align}
	Integrating in time then yields \eqref{5sdy3_a}.
	
	The proof of \eqref{5sdy3_b} is similar.  Note that for all $t\ge \frac 12$,
	\begin{align}
		\|\partial_x^{10} \Pi_{\ge 2} (u^3 (t,\cdot ) ) \|_2
		\le
		\|
		\partial_x^{10} (u^3 (t,\cdot ) ) \|_2
		\le \alpha_3 e^{-3(\ka^2-1) t},
	\end{align}
	where $\alpha_3>0$ depends on  ($u_0$, $\gamma$, $\ka$).
	With this we compute:
	\begin{equation}
	\begin{aligned}
		&\frac 12 \frac d {dt} ( \| \partial_x^{10} \Pi_{\ge 2} u(t,\cdot) \|_2^2 )\\
		& \le -\ka^2 \| \Lambda^{\frac {\gamma}2} \partial_x^{10} \Pi_{\ge 2} u \|_2^2
		+ \| \partial_x^{10} \Pi_{\ge 2} u \|_2^2 +
		\|\partial_x^{10}  (u^3) \|_2 \| \partial_x^{10} \Pi_{\ge 2}  u\|_2 \\
		& \le ( - (\ka^22^{\gamma}-1)  ) \| \partial_x^{10} \Pi_{\ge 2} u  \|_2^2
		+ \alpha_3 e^{-3(\ka^2-1) t} \| \partial_x^{10} \Pi_{\ge 2}  u\|_2.
	\end{aligned}
    \end{equation}
	Thus \eqref{5sdy3_b} follows from a simple ODE argument.
	
	Finally \eqref{5sdy3_d} follows from working with the system
	\begin{align}
		\partial_t \Pi_{\ge 2} u = -\ka^2 \Lambda^\gamma \Pi_{\ge 2} u +
		\Pi_{\ge 2} u -\Pi_{\ge 2} (u^3),
	\end{align}
	and bootstrapping estimates using \eqref{5sdy3_c}.  The estimate \eqref{5sdy3_e} is
	obvious. We omit  the details.
\end{proof}

We turn now to some (by now) standard log-convexity results.

\begin{prop} (Log convexity for an almost-linear model) \label{5sdpro1}
	Suppose $\mathbb H$ is a real Hilbert space with inner product $\langle\cdot, \cdot\rangle$ and norm
	$\|\cdot \|$. Let $A$ be a symmetric operator on $\mathbb H$ with domain $\mathcal D(A)$.
	Let $T>0$ and $u \in C_t^1([0, T], \, \mathbb H)$ satisfy $u(t) \in \mathcal D(A)$
	for each $0<t<T$, and
	\begin{align}
		\| \partial_t u + A u \| \le \alpha(t) \| u (t) \|, \qquad\forall\, 0<t<T,
	\end{align}
	where $\alpha(t) \ge 0$ satisfies
	\begin{align}
		\int_0^T \alpha(t)^2 dt <\infty.
	\end{align}
	Denote $m(t) = \| u(t) \|^2$.
	Then $m$ is log-convex:
	\begin{align}
		m(t) \le  e^{\int_0^T (4\alpha(s)+s\alpha(s)^2) ds} m(0)^{1-\frac t T} m(T)^{\frac t T},
		\qquad\forall\, 0\le t \le T.
	\end{align}
	It follows that either $m (t)\equiv 0$ on $[0, T]$ or  $m(t) >0$ for all $t\in [0,T]$.
\end{prop}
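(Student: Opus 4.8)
The plan is to derive a differential inequality of the form $(\log m)'' \ge -(\text{integrable error})$ and then integrate twice. First I would compute the first derivative
\[
m'(t) = 2\langle \partial_t u, u\rangle = 2\langle \partial_t u + Au, u\rangle - 2\langle Au, u\rangle,
\]
using symmetry of $A$. Next I would differentiate the quantity $\langle Au, u\rangle$, or rather work with $m'(t)$ directly and compute $m''(t)$, being careful that the only genuinely available regularity is $u\in C^1_t$; the standard device here is to write everything in terms of $\partial_t u$ and the ``defect'' $g := \partial_t u + Au$, which satisfies $\|g(t)\|\le \alpha(t)\|u(t)\|$. Setting $v = -Au = g - \partial_t u$, one has $m' = 2\langle v, u\rangle + 2\langle g,u\rangle$. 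The key algebraic identity is the Cauchy--Schwarz-type lower bound for $m m'' - (m')^2$: after expanding, the ``clean'' part (with $g\equiv 0$, i.e. the exact equation $\partial_t u = v = -Au$) gives
\[
m m'' - (m')^2 = 4\left(\|u\|^2\|\partial_t u\|^2 - \langle u,\partial_t u\rangle^2\right) \ge 0
\]
by Cauchy--Schwarz, which is exactly log-convexity in the error-free case. The role of $g$ is to contribute lower-order terms controlled by $\alpha(t)\|u\|^2$ and $\alpha(t)^2\|u\|^2$; tracking these carefully yields
\[
\frac{d^2}{dt^2}\log m(t) \ge -\,C_0\bigl(\alpha(t) + t\,\alpha(t)^2\bigr)\cdot\frac{1}{t}\quad\text{(schematically)},
\]
or more precisely a bound whose double time-integral is $\int_0^T(4\alpha(s) + s\alpha(s)^2)\,ds$, matching the stated prefactor.

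The cleanest route, which I would actually follow to avoid differentiating twice, is the classical Agmon--Nirenberg trick: introduce the frequency function $F(t) = \langle -Au(t), u(t)\rangle / \|u(t)\|^2$ on the set where $m(t) > 0$, show $F$ is ``almost monotone'' (its derivative is bounded below by $-(\text{integrable})$ after using $\|g\|\le\alpha\|u\|$), and then note that $(\log m)'(t) = -2F(t) + 2\langle g,u\rangle/\|u\|^2$, so that $(\log m)'$ is almost monotone. A function whose derivative is almost monotone with integrable defect is, up to an additive correction bounded by $\int_0^T(\cdots)$, convex; convexity of $\log m$ then gives the interpolation inequality with the exponential prefactor. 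To handle the possibility $m(t_0) = 0$ at some interior point, I would first prove the estimate on any subinterval $[\epsilon, T-\epsilon]$ where $m > 0$ (ruling out interior zeros by a standard ODE/uniqueness argument: if $m(t_0)=0$ then $u(t_0)=0$, and backward uniqueness for $\partial_t u = -Au + g$ with $\|g\|\le\alpha\|u\|$ forces $u\equiv 0$ on $[0,t_0]$, hence by the same argument forward as well), and then let $\epsilon\to 0$.

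The main obstacle I anticipate is the low regularity: $A$ is merely symmetric (not self-adjoint or sectorial), and we are only told $u\in C^1_t$ with $u(t)\in\mathcal D(A)$ for $0<t<T$, so $Au(t)$ need not be differentiable in $t$ and $m(t)$ is a priori only $C^1$. This means I cannot naively write $m''$. The remedy is to never differentiate $Au$: using symmetry, $\frac{d}{dt}\langle Au, u\rangle = 2\langle Au, \partial_t u\rangle$ is \emph{not} directly justified either, so instead I would work with the difference quotients of $m'$ and the identity $m'(t) = 2\langle \partial_t u + Au, u\rangle - 2\langle u, Au\rangle$ combined with $\langle u, Au\rangle = \langle u, g - \partial_t u\rangle$, reducing everything to pairings that only involve $u$, $\partial_t u$, and $g$ — all of which are at least continuous in $t$. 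A mollification in time (convolving $u$ with a smooth kernel, deriving the inequality for the mollified quantity, then passing to the limit) is the standard way to make this rigorous, and I expect that to be where the real work lies; the algebra of the Cauchy--Schwarz estimate itself is routine.
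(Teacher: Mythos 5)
Your proposal is correct and follows essentially the same route as the paper: an Agmon--Nirenberg log-convexity argument in which the defect $g=\partial_t u+Au$ is absorbed into a correction term (the paper uses $b_1=\tfrac2m\langle u,f\rangle$ and the orthogonal decomposition $Au=c_1u+cu^{\perp}$) whose double time-integral produces exactly the prefactor $\int_0^T(4\alpha+s\alpha^2)\,ds$, followed by ruling out interior zeros of $m$ by applying the proved inequality on subintervals where $m>0$ --- which is the same mechanism as the backward-uniqueness argument you describe. The one place you go beyond the printed proof is the regularity caveat: the paper differentiates $\langle u,-Au\rangle/m$ in $t$ without justifying that $Au(t)$ is differentiable (or that $\partial_t u\in\mathcal D(A)$), so your proposed difference-quotient/mollification step addresses a point the paper leaves implicit.
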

\begin{proof}
	First we assume that $m(t)>0$ for all $0\le t\le T$.
	Denote $f=\partial_t u+Au$
	so that
	\begin{align}
		\partial_t u = -Au +f.
	\end{align}
	Denote
	\begin{align}
		b_1 = \frac 2m \langle u,f\rangle.
	\end{align}
	Then clearly
	\begin{align}
		&\frac d {dt} \left( \ln m +\int_t^T b_1(s) ds \right) = \frac 2 m \langle u,-Au\rangle;
	\end{align}
	and
	\begin{equation}
	\begin{aligned}
		&\frac 14  \frac {d^2}{dt^2} \left(\ln m +\int_t^T b_1(s) ds\right)\\
		&= \frac {\langle u_t, -Au\rangle}m - \frac {\langle u, -Au\rangle\langle u, -Au+f\rangle } {m^2}\\
		&= \frac {m \|Au\|^2- |\langle u,-Au\rangle|^2} {m^2}
		+ \frac{\langle f,-Au\rangle }m + \frac {\langle u,Au\rangle \langle u,f\rangle} {m^2}.
	\end{aligned}
    \end{equation}
	We decompose $Au=c_1 u +c u^{\perp}$ where $c_1, c \in \mathbb R$ and $u^{\perp}$ is
	a unit vector orthogonal to $u$.  Plugging this into the last expression, we obtain
	\begin{equation}
	\begin{aligned}
		\frac 14  \frac {d^2}{dt^2}\left(\ln m +\int_t^T b_1(s) ds\right )
		&= \frac {c^2} m - \frac {c \langle f, u^{\perp} \rangle} m  \\
		&\ge \frac { (|c|- \frac 12 |\langle f, u^{\perp}\rangle|)^2} {m}
		-\frac 14 \frac{ |\langle f, u^{\perp}\rangle|^2} m   \\
		& \ge -\frac 14 \alpha(t)^2.
	\end{aligned}
    \end{equation}
	It follows that $r(t)$ is convex, where
	\begin{align}
		r(t) = \ln m(t) + \underbrace{\int_t^T b_1(s) ds + \int_t^T \int_{\tau}^T \alpha^2(s) ds d\tau}_{=:b(t)}.
	\end{align}
	From the convexity of $r(t)$ we deduce
	\begin{align}
		r(t) \le (1-\frac t T) r(0) + \frac t T r(T).
	\end{align}
	This implies that
	\begin{align}
		\ln m(t) \le (1-\frac t T) \ln m(0) + \frac t T \ln m(T)
		+(1-\frac t T) b(0) +\frac t T b(T) - b(t).
	\end{align}
	Since $b(T)=0$ and $\int_t^T \int_s^T \alpha(\tau)^2 d\tau ds \ge 0$, we obtain
	\begin{equation}
		\begin{aligned}
		(1-\frac t T) b(0) +\frac t T b(T) - b(t)
		& = (1-\frac t T)b(0) -b(t) \\
		& \le 2 \int_0^T |b_1(s)|ds  + \int_0^T \int_{\tau}^T \alpha^2 (s) ds d\tau \\
		& \le \int_0^T (4 \alpha(s) +s\alpha(s)^2)ds.
	\end{aligned}
    \end{equation}
	Thus the desired inequality holds under the assumption that $m(t)>0$ for all $t \in [0,T]$.
	
	Now we show how to remove this assumption. Assume that $m(t)$ is not identically zero.
	Since $m(t)\ge 0$ is a continuous function of $t$
	and $m(t)$ is not identically zero,
	we may assume that there exists $t_0\in [0, T]$ such that $m(t_0)>0$. By a continuity argument we can assume $t_0 \in (0,T)$. Now denote
	\begin{align}
		&t_+= \sup\{t:\; t>t_0 \text{ such that $m(s)>0$ for all $t_0\le s \le t$} \}; \\
		&t_-= \inf\{t: \; t<t_0 \text{ such that $m(s)>0$ for all $t\le s \le t_0$} \}.
	\end{align}
	If $t_+<T$, then we have $m(t_+)=0$ with $m(t)>0$ for all $t_0\le t <t_+$. By using a version of
	the proved
	inequality on the interval $[t_0, t_+-\eta]$ (note that $m(t)>0$ for all $t_0\le t\le t_+-\eta$ and
	thus we can use the proved inequality with the interval $[0,T]$ now
	replaced by $[t_0, t_+-\eta]$) and sending $\eta \to 0$, we clearly obtain a contradiction.
	If $t_+=T$ and $m(T)=0$, we also obtain a contradiction by a similar argument.  By a similar reasoning we obtain $t_-=0$ and $m(0)>0$. Thus we have proved that $m(t)>0$ for all
	$t\in [0,T]$.
\end{proof}

\begin{lem} \label{5sdy3_le1}
	For any $0<\gamma\le 2$, there exits $\eta_0=\eta_0(\gamma)>0$ such that
	the following hold for any smooth $2\pi$-periodic \emph{odd} function $u$ on $\mathbb T$:
	\begin{align}
		\int_{\mathbb T} u^3 (-\partial_{xx} )^{\frac {\gamma}2} u  dx
		\ge \eta_0 \|u\|_4^4.
	\end{align}
	For $\gamma=2$, we can take $\eta_0=3/4$.
\end{lem}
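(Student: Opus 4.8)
The plan is to deduce the estimate from an elementary pointwise inequality for real numbers, using the singular-integral (bilinear-form) representation of the periodic fractional Laplacian together with the auxiliary odd function $v=u|u|$, which satisfies $\int_{\mathbb T}v^2\,dx=\int_{\mathbb T}u^4\,dx=\|u\|_4^4$.

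\emph{Case $0<\gamma<2$.} I would start from the representation of $\Lambda^\gamma=(-\partial_{xx})^{\gamma/2}$ on $\mathbb T$,
\[
(\Lambda^\gamma \phi)(x)=C_\gamma\,\operatorname{PV}\!\int_{\mathbb T}\bigl(\phi(x)-\phi(y)\bigr)\,G_\gamma(x-y)\,dy,\qquad G_\gamma(z)=\sum_{n\in\mathbb Z}\frac{1}{|z+2n\pi|^{1+\gamma}}>0,
\]
with $C_\gamma>0$; symmetrizing in $x\leftrightarrow y$ gives, for smooth $2\pi$-periodic $f,g$,
\[
\int_{\mathbb T}(\Lambda^\gamma f)\,g\,dx=\frac{C_\gamma}{2}\iint_{\mathbb T\times\mathbb T}\bigl(f(x)-f(y)\bigr)\bigl(g(x)-g(y)\bigr)G_\gamma(x-y)\,dx\,dy .
\]
Taking $f=u$ and $g=u^3$ reduces matters to bounding from below the finite-difference form with integrand $(u(x)-u(y))(u(x)^3-u(y)^3)$. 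The crucial elementary fact is
\[
(a-b)(a^3-b^3)\ \ge\ \tfrac34\,\bigl(a|a|-b|b|\bigr)^2,\qquad\forall\,a,b\in\mathbb R ,
\]
which I would prove, for $a>b$, by writing $a-b=\int_b^a 1\,ds$, $a^3-b^3=\int_b^a 3s^2\,ds$, $a|a|-b|b|=\int_b^a 2|s|\,ds$ and invoking Cauchy--Schwarz, $\bigl(\int_b^a|s|\,ds\bigr)^2\le\bigl(\int_b^a s^2\,ds\bigr)\bigl(\int_b^a 1\,ds\bigr)$ (the case $a<b$ is symmetric, $a=b$ trivial). Since $C_\gamma,G_\gamma>0$, this pointwise bound integrates through:
\[
\int_{\mathbb T}u^3\Lambda^\gamma u\,dx\ \ge\ \frac34\cdot\frac{C_\gamma}{2}\iint_{\mathbb T\times\mathbb T}\bigl(v(x)-v(y)\bigr)^2 G_\gamma(x-y)\,dx\,dy=\frac34\,\langle\Lambda^\gamma v,v\rangle=\frac34\,\|\Lambda^{\gamma/2}v\|_2^2 .
\]
Finally, since $u$ is odd, $v=u|u|$ is odd, so its Fourier expansion involves only frequencies $|m|\ge1$; as $\gamma>0$ this yields, by Parseval, $\|\Lambda^{\gamma/2}v\|_2^2\ge\|v\|_2^2=\int_{\mathbb T}u^4\,dx=\|u\|_4^4$, whence the estimate holds with $\eta_0=3/4$, uniformly in $\gamma\in(0,2)$.

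\emph{Case $\gamma=2$.} Here the kernel representation degenerates, so I would argue directly: $v=u|u|\in C^1$ with $v'=2|u|u'$, so integrating by parts on $\mathbb T$,
\[
\int_{\mathbb T}u^3(-\partial_{xx}u)\,dx=\int_{\mathbb T}3u^2(u')^2\,dx=\frac34\int_{\mathbb T}(v')^2\,dx\ \ge\ \frac34\int_{\mathbb T}v^2\,dx=\frac34\,\|u\|_4^4 ,
\]
the inequality being the Poincar\'e inequality for odd $2\pi$-periodic functions (lowest mode $\sin x$, eigenvalue $1$). This gives $\eta_0=3/4$ again.

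The argument is short and I do not foresee a serious obstacle; the points needing (routine) care are the positivity of the periodized kernel $G_\gamma$ and the symmetrization identity, the convergence of the double integral near the diagonal (which holds because $(u(x)-u(y))(u(x)^3-u(y)^3)=O(|x-y|^2)$ while $G_\gamma(z)=O(|z|^{-1-\gamma})$ with $\gamma<2$), and the regularity of $v=u|u|$ ($C^1$, hence in $H^{\gamma/2}$) so that $\Lambda^{\gamma/2}v$ and the spectral-gap bound are legitimate. The genuine content is the pointwise inequality $(a-b)(a^3-b^3)\ge\tfrac34(a|a|-b|b|)^2$; for the record, the whole estimate is an instance of the Stroock--Varopoulos inequality with exponent $p=4$, so one could alternatively quote that result and then apply the Poincar\'e inequality for odd functions.
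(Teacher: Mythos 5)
Your proof is correct. For $\gamma=2$ it coincides with the paper's argument verbatim: integration by parts, the identity $3u^2(u')^2=\tfrac34(\partial_x(|u|u))^2$, and the Poincar\'e inequality for the odd (hence mean-zero) function $v=|u|u$. For $0<\gamma<2$, however, you take a genuinely different and more self-contained route: the paper simply cites the general result of \cite{Li2013} (a generalized Poincar\'e/Stroock--Varopoulos-type inequality) and gives no argument, whereas you reprove the needed instance from scratch via the symmetrized bilinear form of the periodized kernel and the pointwise inequality $(a-b)(a^3-b^3)\ge\tfrac34\bigl(a|a|-b|b|\bigr)^2$, itself a clean consequence of Cauchy--Schwarz on $\int_b^a|s|\,ds$. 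What this buys you is twofold: the proof is independent of the external reference, and it yields the uniform constant $\eta_0=3/4$ for \emph{all} $\gamma\in(0,2]$, which is slightly stronger than the statement (which only asserts some $\eta_0(\gamma)>0$ in general and $3/4$ at $\gamma=2$). Your care about the diagonal integrability ($O(|z|^{1-\gamma})$ integrand in one dimension for $\gamma<2$), the positivity of the periodized kernel, and the regularity $v\in C^1\subset H^{\gamma/2}$ covers the routine technical points; nothing is missing.
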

\begin{proof}
	This follows from a general result proved in \cite{Li2013}.  For $\gamma=2$ we give
	a direct proof as follows (below we write $\int_{\mathbb T} dx$ as $\int$)
	\begin{align}
		\int u^3 (-\partial_{xx} u)
		= 3 \int (\partial_x u)^2 u^2 =\frac 3 4 \int \Bigl(\partial_x ( |u| u) \Bigr)^2 dx
		\ge \frac 34 \| |u| u \|_2^2 =\frac 34 \|u\|_4^4.
	\end{align}
	Note that in the above we took advantage of the odd symmetry since the function
	$v= |u| u$ is still odd on $[-\pi, \pi]$.  Note that regularity is not an issue here since
	the function $g(z)=|z|z$ is nice.
\end{proof}

\begin{thm}[Log convexity of $L^2$ mass for the nonlinear case] \label{5sdy3a}
	Let $\ka>0$ and $0<\gamma\le 2$.
	Assume $u_0$ is $2\pi$ periodic, odd and bounded. To avoid triviality assume $\|u_0\|_2>0$
	so that $u_0$ is not identically zero.
	Suppose $u$ is the solution to \eqref{5sdy1} corresponding to the initial data $u_0$.
	Denote $m(t) = \|u (t)\|_{L_x^2(\mathbb T)}^2$.
	Then the following hold.
	\begin{itemize}
		\item If $0<\ka <1$, then $m(t)$ is log-convex on any interval $0\le t_1 <t_2$:
		\begin{align}
			m(t) \le e^{c_1 \cdot (t_2-t_1)^2} m(t_1)^{1-\frac{t-t_1}{t_2-t_1}}
			m(t_2)^{\frac {t-t_1}{t_2-t_1} }, \qquad\forall\, t\in (t_1,t_2),
		\end{align}
		where $c_1>0$ is a constant depending only on ($\|u_0\|_{\infty}$, $\gamma$,
		$\ka$).
		\item If $\ka >1$, then $m(t)$ is log-convex on any interval $0\le t_1 <t_2$:
		\begin{align}
			m(t) \le  c_2 m(t_1)^{1-\frac{t-t_1}{t_2-t_1}}
			m(t_2)^{\frac {t-t_1}{t_2-t_1} }, \qquad\forall\, t\in (t_1,t_2),
		\end{align}
		where $c_2>0$ is a constant depending only on ($\|u_0\|_{\infty}$, $\gamma$,
		$\ka$).
		
		\item If $\ka =1$, then $m(t)$ is log-convex on any interval $0\le t_1 <t_2$:
		\begin{align}
			m(t) \le  (1+t_2-t_1)^{c_3}m(t_1)^{1-\frac{t-t_1}{t_2-t_1}}
			m(t_2)^{\frac {t-t_1}{t_2-t_1} }, \qquad\forall\, t\in (t_1,t_2),
		\end{align}
		where $c_3>0$ is a constant depending only on ($\|u_0\|_{\infty}$, $\gamma$,
		$\ka$).

		\item For each $0<\gamma\le 2$, there is $\ka_0=\ka_0(\gamma)>0$ such that
		if $\ka\ge \ka_0$, then we have sharp log-convexity, i.e.:
		on any interval $0\le t_1 <t_2$:
		\begin{align} \label{5sdy3a_e3}
			m(t) \le   m(t_1)^{1-\frac{t-t_1}{t_2-t_1}}
			m(t_2)^{\frac {t-t_1}{t_2-t_1} }, \qquad\forall\, t\in (t_1,t_2),
		\end{align}
		Furthermore for $\gamma=2$, we can choose $\ka_0(2) = 2/\sqrt 3$.
		
	\end{itemize}
\end{thm}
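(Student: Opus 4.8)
The plan is to handle the four regimes through two complementary devices: Proposition~\ref{5sdpro1} for $\ka\ge 1$ (where the decay estimates of Theorem~\ref{5sdy3} make the nonlinear perturbation integrable in time), and a time-dependent Agmon--Nirenberg frequency function for $0<\ka<1$ and for the sharp range $\ka\ge\ka_0(\gamma)$. First I would record the common setup. Since $\pm M$ with $M:=\max(\|u_0\|_\infty,1)$ are a super/sub-solution of the fractional Allen--Cahn equation, the maximum principle gives $\|u(t,\cdot)\|_\infty\le M$ for all $t\ge0$; by density we may also assume $u_0$ is smooth, hence $u$ is smooth. Writing the equation as $\partial_t u+Au=-u^3$ with the (time-independent) symmetric operator $A:=\ka^2\Lambda^\gamma-1$, one has $\|\partial_t u+Au\|_2=\|u^3\|_2\le\|u\|_\infty^2\|u\|_2$, so Proposition~\ref{5sdpro1} applies with $\alpha(t)=\|u(t,\cdot)\|_\infty^2\le M^2$. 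In particular its dichotomy shows that, since $\|u_0\|_2>0$, the function $m(t)=\|u(t,\cdot)\|_2^2$ stays strictly positive for all $t$, which is what lets us differentiate $\ln m$ below.

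For $\ka\ge1$ I would merely quantify the prefactor $e^{\int_{t_1}^{t_2}(4\alpha(\tau)+(\tau-t_1)\alpha(\tau)^2)d\tau}$ obtained by applying Proposition~\ref{5sdpro1} to the shifted solution $s\mapsto u(t_1+s)$ on $[0,t_2-t_1]$. By Theorem~\ref{5sdy3} and Sobolev embedding, for $t\ge\tfrac12$ one has $\|u(t,\cdot)\|_\infty\lesssim e^{-(\ka^2-1)t}$ when $\ka>1$ and $\|u(t,\cdot)\|_\infty\lesssim t^{-1/2}$ when $\ka=1$, while $\|u\|_\infty\le M$ on $[0,\tfrac12]$. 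When $\ka>1$ both $\int_0^\infty\alpha$ and $\int_0^\infty s\,\alpha^2$ converge, and since $\tau-t_1\le\tau$ the exponent is bounded by a constant $c_2$ depending only on $(\|u_0\|_\infty,\gamma,\ka)$. When $\ka=1$, $\alpha(\tau)\lesssim\tau^{-1}$ for $\tau\ge\tfrac12$, so both $\int_{t_1}^{t_2}4\alpha$ and $\int_{t_1}^{t_2}(\tau-t_1)\alpha^2$ are $\lesssim 1+\log(1+t_2-t_1)$ (using $2t_2\le 2(1+t_2-t_1)$ when $t_1\le\tfrac12$ and $t_2/t_1\le 2(1+t_2-t_1)$ when $t_1>\tfrac12$), which yields the polynomial prefactor $(1+t_2-t_1)^{c_3}$.

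For $0<\ka<1$ and for $\ka\ge\ka_0(\gamma)$ I would instead rewrite the equation as $\partial_t u=-\tilde A(t)u$ with the time-dependent symmetric operator $\tilde A(t):=\ka^2\Lambda^\gamma-1+u(t,\cdot)^2$ and study the frequency $\tilde\Lambda(t):=\langle\tilde A(t)u,u\rangle/m(t)$. By a computation analogous to the one in the proof of Proposition~\ref{5sdpro1} one gets $\tfrac12\frac{d}{dt}\ln m(t)=-\tilde\Lambda(t)$ and, using the Cauchy--Schwarz inequality $\|\tilde A(t)u\|_2^2\ge\tilde\Lambda(t)^2m(t)$,
\[
\frac{d}{dt}\tilde\Lambda(t)\ \le\ \frac{\langle\tilde A_t(t)u,u\rangle}{m(t)}\ =\ \frac{2}{m(t)}\langle u^3,\partial_t u\rangle\ =\ \frac{2}{m(t)}\Big(\|u\|_4^4-\ka^2\!\int_{\mathbb T}u^3\Lambda^\gamma u\,dx-\|u\|_6^6\Big).
\]
Now Lemma~\ref{5sdy3_le1} gives $\int_{\mathbb T}u^3\Lambda^\gamma u\,dx\ge\eta_0(\gamma)\|u\|_4^4$, so $\frac{d}{dt}\tilde\Lambda(t)\le\frac{2}{m(t)}\big((1-\ka^2\eta_0(\gamma))\|u\|_4^4-\|u\|_6^6\big)$. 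If $\ka\ge\ka_0(\gamma):=\eta_0(\gamma)^{-1/2}$ (so $\ka^2\eta_0(\gamma)\ge1$) the right-hand side is $\le0$, hence $\tilde\Lambda$ is non-increasing, $\frac{d^2}{dt^2}\ln m\ge0$, and $\ln m$ is convex: this is the sharp log-convexity \eqref{5sdy3a_e3}, and $\ka_0(2)=2/\sqrt3$ since $\eta_0(2)=3/4$. If instead $0<\ka<1$, dropping $-\|u\|_6^6$ and using $\|u\|_4^4\le\|u\|_\infty^2m\le M^2m$ gives $\frac{d}{dt}\tilde\Lambda\le2M^2$, i.e. $\frac{d^2}{dt^2}\ln m\ge-4M^2$; thus $t\mapsto\ln m(t)+2M^2t^2$ is convex, and linear interpolation on $[t_1,t_2]$ produces the prefactor $e^{2M^2\theta(1-\theta)(t_2-t_1)^2}\le e^{(M^2/2)(t_2-t_1)^2}$, i.e. one may take $c_1=M^2/2$.

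The main obstacle is getting the \emph{shape} of the prefactor right in each regime: one must use Proposition~\ref{5sdpro1} for $\ka\ge1$, where the exponential (resp.\ algebraic) decay of $\|u\|_\infty$ makes $\int\alpha$ and $\int\tau\alpha^2$ finite (resp.\ logarithmic), whereas for $\ka<1$ one must switch to the frequency-function argument to avoid a spurious term linear in $t_2-t_1$; and one must recognize that the threshold $\ka_0(\gamma)$ is precisely the value at which the coercivity of Lemma~\ref{5sdy3_le1} compensates the destabilizing $+u$ term, i.e.\ $\ka^2\eta_0(\gamma)\ge1$. A secondary point is justifying $m(t)>0$ and the differentiability of $\tilde\Lambda$, both handled by first invoking the dichotomy of Proposition~\ref{5sdpro1} (with the bounded choice $\alpha=\|u\|_\infty^2$) and working with smooth approximations of $u_0$.
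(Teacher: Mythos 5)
Your proposal is correct, and for three of the four bullets it follows the paper's own route: the $\ka>1$ and $\ka=1$ cases are handled exactly as in the paper, by feeding the decay of $\|u(t)\|_\infty$ from Theorem~\ref{5sdy3} into Proposition~\ref{5sdpro1} applied on the shifted interval (your explicit bookkeeping of $\int\alpha$ and $\int s\,\alpha^2$ fills in details the paper leaves implicit), and your sharp-convexity argument is the paper's ``nonlinear'' computation in disguise: your monotonicity of $\tilde\Lambda=\langle\tilde A u,u\rangle/m$ is equivalent to the paper's direct verification that $m''m-(m')^2\ge 0$, both resting on the same two ingredients, Cauchy--Schwarz and the coercivity $\int u^3\Lambda^\gamma u\ge\eta_0\|u\|_4^4$ of Lemma~\ref{5sdy3_le1}, and both producing the same threshold $\ka_0=\eta_0^{-1/2}$, hence $\ka_0(2)=2/\sqrt3$. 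The one place you genuinely depart from the paper is the case $0<\ka<1$: the paper simply invokes Proposition~\ref{5sdpro1} with the uniform bound $\alpha\le M^2$, whose prefactor $e^{\int_0^T(4\alpha+s\alpha^2)\,ds}$ contains a term linear in $T=t_2-t_1$ and therefore does not literally reduce to the stated form $e^{c_1T^2}$ for small $T$; your frequency-function bound $\frac{d}{dt}\tilde\Lambda\le 2M^2$, i.e.\ $(\ln m)''\ge -4M^2$, yields the interpolation prefactor $e^{2M^2\theta(1-\theta)T^2}\le e^{(M^2/2)T^2}$ directly and so actually matches the theorem's statement more faithfully. The price is that you must separately justify $m(t)>0$ and the differentiability of $\tilde\Lambda$ before running the frequency argument, which you correctly do by first invoking the dichotomy of Proposition~\ref{5sdpro1} with the crude bound $\alpha\le M^2$.
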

\begin{proof}
	First we consider the case $0<\ka <1$. Observe that
	\begin{align}
		\| \partial_t u + \ka^2 \Lambda^{\gamma} u-u
		\|_2 \le \|u (t) \|_{L_x^{\infty} }^2 \|u(t) \|_2.
	\end{align}
	It is not difficult to check that
	\begin{align}
		\sup_{0\le t <\infty} \|u(t) \|_{L_x^{\infty}} \le \tilde c_1,
	\end{align}
	where $\tilde c_1>0$ depends only on ($\|u_0\|_{\infty}$, $\gamma$, $\ka$).
	Thus the result follows from Proposition \ref{5sdpro1}.
	
	Now for $\ka> 1$, we observe that by using Theorem \ref{5sdy3} (note that for $0<s\le \frac 12$ we have uniform control of $L^{\infty}$-norm), it holds that
	\begin{align}
		\|u(s) \|_{L_x^{\infty} } \le \tilde c_2 e^{-\theta_1 s}, \qquad \forall\, s\ge 0,
	\end{align}
	where $\tilde c_2>0$ depends only on ($\|u_0\|_{\infty}$, $\gamma$, $\ka$) and
	$\theta_1$ depends only on ($\ka$, $\gamma$). Thus
	\begin{align}
		\tilde \alpha(s) : = \|u(s) \|_{L_x^{\infty}}^2 \le \tilde c_2^2 e^{-2\theta_1 s}, \qquad\forall\,
		s\ge 0.
	\end{align}
	On any time interval $[t_1, t_2]$ with $0\le t_1 <t_2$, in order to apply Proposition
	\ref{5sdpro1},  we note for $t\ge 0$,
	\begin{align}
		\alpha(t)=\tilde \alpha(t_1+t) \le \tilde c_2^2 e^{-2\theta_1 t}.
	\end{align}
	Thus the desired result follows for $\ka >1$.
	
	The case for $\ka=1$ follows similarly from Theorem \ref{5sdy3} and Proposition
	\ref{5sdpro1}. The main observation is that $\|u(s) \|_{L_x^{\infty}} =O( (1+s)^{-\frac 12})$
	for $s\ge 0$.
	
	Finally we turn to the proof of \eqref{5sdy3a_e3}. We shall appeal to a more ``nonlinear" proof
	as follows. Denote  $m(t) = \| u(t) \|_{L_x^2}^2$ and $A= \ka^2 (-\partial_{xx})^{\frac{\gamma}2} - 1$.  Thus we have
	\begin{align}
		\partial_t u = -A u -u^3.
	\end{align}
	Denote $m^{\prime}=\frac d {dt} m$ and $m^{\prime\prime} = \frac {d^2}{dt^2} m$.
	It is not difficult to check that (below $\langle\cdot, \cdot\rangle$ denotes the usual $L^2$ inner
	product, and $u_t=\partial_t u$)
	\begin{align}
		& m^{\prime}= 2\langle u, u_t\rangle= -2 \langle u, Au\rangle  -2\|u\|_4^4;\\
		&m^{\prime\prime} =-4\langle u_t, Au\rangle-8\langle u^3,u_t\rangle.
	\end{align}
	Thus
	\begin{align}
		& \frac 12 m^{\prime} = \langle u, u_t\rangle; \\
		& \frac 14 m^{\prime\prime}= \|u_t\|_2^2 -\langle u^3, u_t\rangle; \\
		& \frac 14 m^{\prime\prime} m - \frac 1 4 (m^{\prime})^2
		= \|u_t\|_2^2 \|u\|_2^2 - |\langle u, u_t\rangle|^2 +(\langle u^3,Au\rangle +\|u\|_6^6)\|u\|_2^2.
	\end{align}
	It remains for us to verify
	\begin{align}
		\langle u^3, Au\rangle = \ka^2 \int_{\mathbb T}u^3 (-\partial_{xx})^{\frac{\gamma}2} u dx
		- \|u\|_4^4 \ge 0.
	\end{align}
	This in turn follows from Lemma \ref{5sdy3_le1}.
\end{proof}

\begin{cor}[No finite time extinction of $L^2$ mass]  
	Let $\ka>0$ and $0<\gamma\le 2$.
	Assume $u_0$ is $2\pi$ periodic, odd and bounded. To avoid triviality assume $\|u_0\|_2>0$
	so that $u_0$ is not identically zero.
	Suppose $u$ is the solution to \eqref{5sdy1} corresponding to the initial data $u_0$.
	Then $\|u(t) \|_{L_x^2}>0$ for any $0\le t<\infty$.
\end{cor}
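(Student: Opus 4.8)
The plan is to read off this corollary as an immediate consequence of the log-convexity of $m(t)=\|u(t)\|_{L_x^2(\mathbb T)}^2$ proved in Theorem \ref{5sdy3a}, combined with a short backward-in-time argument. I would argue by contradiction: suppose $m(t_*)=0$ for some $t_*>0$, and derive a contradiction with $\|u_0\|_2>0$.

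First I would recall that, in every regime of $\ka>0$, Theorem \ref{5sdy3a} furnishes, on an arbitrary interval $[t_1,t_2]$ with $0\le t_1<t_2$, a bound of the shape
\[
m(t)\ \le\ C(t_2-t_1)\, m(t_1)^{1-\theta}\, m(t_2)^{\theta},\qquad \theta=\tfrac{t-t_1}{t_2-t_1},\quad t\in(t_1,t_2),
\]
where the prefactor $C(t_2-t_1)$ is finite (it equals $1$, or $e^{c_1(t_2-t_1)^2}$, or $c_2$, or $(1+t_2-t_1)^{c_3}$, according to whether $\ka\ge\ka_0(\gamma)$, $0<\ka<1$, $\ka>1$, or $\ka=1$). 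I would then fix any $t_1\in(0,t_*)$ and apply this inequality on $[t_1,t_*]$ — legitimate because $t_1>0$ so $u$ is smooth there and all hypotheses of Proposition \ref{5sdpro1} hold. Since $m(t_*)=0$ and $m(t_1)<\infty$, the right-hand side vanishes for every $t\in(t_1,t_*)$, so $m\equiv 0$ on $(t_1,t_*)$, and by continuity of $m$ this forces $m(t_1)=0$. As $t_1\in(0,t_*)$ was arbitrary, $m$ vanishes identically on $(0,t_*)$; using $u\in C([0,\infty);L^2(\mathbb T))$ (standard parabolic theory for bounded data), $m$ is continuous at $t=0$, hence $m(0)=\lim_{t\to 0+}m(t)=0$, i.e. $\|u_0\|_2=0$, a contradiction. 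Therefore $\|u(t)\|_{L_x^2}>0$ for all $0\le t<\infty$.

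There is no genuine obstacle here: all the analytic content — the abstract log-convexity of Proposition \ref{5sdpro1}, the uniform-in-time $L^\infty$ bound coming from the smoothing estimates, and their combination in Theorem \ref{5sdy3a} — is already in place. The only points worth a line of justification are the finiteness of $C(t_2-t_1)$ as $t_1\to 0$ (clear from its explicit form) and the continuity of $m$ down to $t=0$. Equivalently, and perhaps more cleanly, one may simply invoke the dichotomy recorded at the end of Proposition \ref{5sdpro1}: applied on each interval $[0,T]$ it asserts that either $m\equiv 0$ on $[0,T]$ or $m>0$ on $[0,T]$; since $m(0)=\|u_0\|_2^2>0$ excludes the first alternative, $m>0$ on $[0,T]$, and letting $T\to\infty$ finishes the proof.
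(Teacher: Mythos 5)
Your proposal is correct and follows essentially the same route as the paper, whose entire proof is the one-line remark that the corollary follows from the log-convexity result (Theorem \ref{5sdy3a}, via the dichotomy recorded at the end of Proposition \ref{5sdpro1}); you have simply written out the details that the authors leave implicit. Your second, shorter argument --- invoking the ``either $m\equiv 0$ or $m>0$ on $[0,T]$'' alternative directly and ruling out the first case by $m(0)=\|u_0\|_2^2>0$ --- is exactly the intended reading.
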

\begin{proof}
	This follows easily from Proposition \ref{5sdy3a}.
\end{proof}

\begin{thm}[Profiles as $t\to \infty$] \label{5sdy4}
	Let $\ka\ge 1$ and $0<\gamma\le 2$.
	Assume $u_0$ is $2\pi$ periodic, odd and bounded.  To avoid triviality assume $\|u_0\|_2>0$
	so that $u_0$ is not identically zero.
	Suppose $u$ is the solution to \eqref{5sdy1} corresponding to the initial data $u_0$.  Then
	the following hold.
	
	\begin{itemize}
		\item Case $\ka>1$. For all $t\ge 1$, we have
		\begin{align} \label{5sdy4_1}
			u(x,t) = e^{-(\ka^2-1) t} \alpha_* \sin x + r(t),
		\end{align}
		where the constant $\alpha_*$ depends on ($u_0$, $\gamma$, $\ka$).
		The remainder
		term $r(t)$ has the estimate
		\begin{align}
			\| r(t) \|_{H^{10}} \le \tilde \alpha e^{-\eta_1 t}, \qquad\forall\, t\ge 1,
		\end{align}
		with $\tilde \alpha>0$ depends only on ($u_0$, $\gamma$, $\ka$), and
		$\eta_1 =\min\{ \ka^2 2^{\gamma} -1, \; 3(\ka^2-1) \}>\ka^2-1$.
		
		\item Case $\ka=1$. For all $t\ge 1$, we have
		\begin{align} \label{5sdy4_2}
			u(x,t) = t^{-\frac 12} \beta_* \sin x + r_1(t),
		\end{align}
		where the constant $\beta_*$ depends on ($u_0$, $\gamma$). If $\beta_*=0$, then the remainder
		term $r_1(t)$ has the estimate
		\begin{align}
			\| r_1(t) \|_{H^{10}} \le \tilde \beta t^{-1} \sqrt{\ln (t+2)}, \qquad\forall\, t\ge 1,
		\end{align}
		with $\tilde \beta>0$ depends only on ($u_0$, $\gamma$). If $\beta_* \ne 0$, then the remainder
		term $r_1(t)$ has the estimate
		\begin{align}
			\| r_1(t) \|_{H^{10}} \le \tilde \beta t^{-\frac 32} {\ln (t+2)}, \qquad\forall\, t\ge 1,
		\end{align}
		with $\tilde \beta>0$ depends only on ($u_0$, $\gamma$).
	\end{itemize}
\end{thm}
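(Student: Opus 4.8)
The plan is to split off the single Fourier mode $\sin x$ --- the one whose $\Lambda^{\gamma}$--eigenvalue equals $1$, i.e.\ exactly where $-\ka^2\Lambda^{\gamma}+1$ fails to be dissipative when $\ka=1$ --- and reduce \eqref{5sdy1} to a scalar ODE for its amplitude, relegating everything else to Theorem \ref{5sdy3}. Since $u$ is odd, write $u(x,t)=a(t)\sin x+w(x,t)$ with $a(t)=\tfrac1\pi\int_{\mathbb T}u\sin x\,dx$ and $w=\Pi_{\ge2}u$. Projecting \eqref{5sdy1} onto $\sin x$ and using $\Lambda^{\gamma}\sin x=\sin x$ gives
\[
\dot a=-(\ka^2-1)a-c(t),\qquad c(t):=\tfrac1\pi\int_{\mathbb T}u^3\sin x\,dx,
\]
and expanding $u^3=(a\sin x+w)^3$ with $\Pi_1 w=0$ and $\sin^3x=\tfrac34\sin x-\tfrac14\sin 3x$ yields $c(t)=\tfrac34 a^3+O\bigl(a^2\|w\|_2+a\|w\|_2^2+\|w\|_{H^1}^3\bigr)$. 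From Theorem \ref{5sdy3} I import, for $t\ge 1$: $|a(t)|\lesssim e^{-(\ka^2-1)t}$ and $\|w(t)\|_{H^{10}}\lesssim e^{-\eta_1 t}$ when $\ka>1$; $|a(t)|\lesssim t^{-1/2}$ and $\|w(t)\|_{H^{10}}\lesssim t^{-3/2}$ when $\ka=1$.

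\emph{Case $\ka>1$.} Put $b(t)=a(t)e^{(\ka^2-1)t}$, so $\dot b=-c(t)e^{(\ka^2-1)t}$. The decay bounds above, together with $\eta_1\ge\ka^2-1$, give $|c(t)|\lesssim e^{-3(\ka^2-1)t}$ (the $a^3$ term dominates), so $\int_1^\infty|c(s)|e^{(\ka^2-1)s}\,ds<\infty$; hence $b(t)$ converges to $\alpha_*:=b(1)-\int_1^\infty c(s)e^{(\ka^2-1)s}\,ds$, with $|b(t)-\alpha_*|\lesssim e^{-2(\ka^2-1)t}$. Since $r(t)=u-e^{-(\ka^2-1)t}\alpha_*\sin x=(b(t)-\alpha_*)e^{-(\ka^2-1)t}\sin x+w(t)$ and $3(\ka^2-1)\ge\eta_1$, combining the last bound with \eqref{5sdy3_b} gives $\|r(t)\|_{H^{10}}\lesssim e^{-\eta_1 t}$, which is \eqref{5sdy4_1}. (Here $\alpha_*$ depends on $u_0,\gamma,\ka$ through $b(1)$ and the integral, and may vanish.)

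\emph{Case $\ka=1$.} Now $\dot a=-\tfrac34 a^3+g(t)$ with $|g(t)|\lesssim a^2t^{-3/2}+at^{-3}+t^{-9/2}\lesssim t^{-5/2}$. The key device is the logarithmic time change $\tau=\ln t$, $\phi(\tau)=\sqrt t\,a(t)$, under which a short computation gives the \emph{asymptotically autonomous} scalar system
\[
\phi'=\tfrac12\phi-\tfrac34\phi^3+h(\tau),\qquad |h(\tau)|\lesssim e^{-\tau},\qquad |\phi(\tau)|\lesssim 1,
\]
the a priori bound on $\phi$ coming from $|a|\lesssim t^{-1/2}$. The limiting field $f(\phi)=\tfrac12\phi-\tfrac34\phi^3=-V'(\phi)$, $V(\phi)=-\tfrac14\phi^2+\tfrac3{16}\phi^4$, is a gradient, so from $\tfrac{d}{d\tau}V(\phi)\le-\tfrac12 f(\phi)^2+\tfrac12 h^2$ and $\int_0^\infty h^2<\infty$ one gets $\int_0^\infty f(\phi)^2\,d\tau<\infty$ and $V(\phi(\tau))$ convergent, hence by Barbalat's lemma $\phi(\tau)\to\beta_*\in\{0,\pm\sqrt{2/3}\}$ (the distinct values $V(0)=0$, $V(\pm\sqrt{2/3})=-\tfrac1{12}$ ruling out transit between the two stable wells). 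Linearizing at $\pm\sqrt{2/3}$, where $f'=-1$, against $|h|\lesssim e^{-\tau}$ gives the resonant rate $\phi(\tau)=\beta_*+O(\tau e^{-\tau})$ when $\beta_*\ne0$, i.e.\ $a(t)=\beta_* t^{-1/2}+O(t^{-3/2}\ln t)$; while near $0$, using $\int^\infty\phi^2\,d\tau<\infty$ in a Duhamel bootstrap gives $|\phi(\tau)|\lesssim e^{-\tau}$, i.e.\ $|a(t)|\lesssim t^{-3/2}$, when $\beta_*=0$. Writing $r_1(t)=u-t^{-1/2}\beta_*\sin x=(a(t)-\beta_* t^{-1/2})\sin x+w(t)$ and adding $\|w(t)\|_{H^{10}}\lesssim t^{-3/2}$ yields \eqref{5sdy4_2} (with room to spare when $\beta_*=0$); note the coefficient $\tfrac34=\int_{\mathbb T}\sin^4x\,dx\big/\int_{\mathbb T}\sin^2x\,dx$ pins $\beta_*^2=\tfrac23$, and $\gamma$ enters only through the $w$--estimates.

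I expect the main obstacle to be precisely the qualitative analysis of the asymptotically autonomous ODE in the $\ka=1$ case: ruling out that $\phi(\tau)$ oscillates forever or lingers near the unstable point $\phi=0$ without converging to it, and pinning down the exact correction rates. The logarithmic rescaling is what makes this tractable, since it converts the $O(t^{-5/2})$ error in $\dot a$ into an \emph{exponentially small} (hence $L^1$ and $L^2$ in $\tau$) forcing $h(\tau)$, so the Lyapunov/Barbalat argument with $V$ applies cleanly and the linearization at the stable equilibria is governed by a genuine resonance producing the $\ln$ factor. Everything else --- the identity $\Lambda^\gamma\sin x=\sin x$, the smoothing estimates upgrading $L^2$--decay to $H^{10}$--decay, and the Gronwall/Duhamel bookkeeping for the remainders --- is routine and largely already inside Theorem \ref{5sdy3}.
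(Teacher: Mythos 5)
Your proposal is correct, and the two cases deserve separate comments. For $\ka>1$ your argument is essentially identical to the paper's: both decompose $u=\Pi_1 u+\Pi_{\ge 2}u$, absorb $\Pi_{\ge 2}u$ into the remainder via \eqref{5sdy3_b}, and run Duhamel on the linear ODE for $a(t)$ with forcing $O(e^{-3(\ka^2-1)t})$, defining $\alpha_*$ by the convergent integral. For $\ka=1$, however, you take a genuinely different route. The paper passes to $\theta=a^2$, obtains $\theta'=-\tfrac32\theta^2+O(t^{-3})$, and analyzes this by ODE comparison: a maximum-principle barrier (Lemma \ref{leTp5212}) to show $\liminf t\theta>0$ in the nondegenerate case, so that $\tfrac{d}{dt}(1/\theta)=\tfrac32+O(t^{-1})$ can be integrated, and an iteration scheme (Lemma \ref{leTp5214}, via rescaling in Proposition \ref{prTp8321}) to force $\theta=O(t^{-2})$ in the degenerate case. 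You instead set $\tau=\ln t$, $\phi(\tau)=\sqrt t\,a(t)$, converting the problem into an asymptotically autonomous gradient flow $\phi'=-V'(\phi)+h$ with exponentially small forcing, and then use the Lyapunov function $V$, Barbalat's lemma, connectedness of the $\omega$-limit set to pin $\phi(\tau)\to\beta_*\in\{0,\pm\sqrt{2/3}\}$, and linearization at the equilibria for the rates. Both are rigorous; your version is arguably more conceptual (it explains the resonant $\ln t$ factor as the standard resonance at the eigenvalue $-1$ of the rescaled flow, and it identifies $\beta_*^2\in\{0,2/3\}$ transparently), and in the degenerate case $\beta_*=0$ your backwards-Duhamel bootstrap near the \emph{unstable} equilibrium actually yields $|a(t)|\lesssim t^{-3/2}$, which is stronger than both the paper's $O(t^{-1})$ bound from Lemma \ref{leTp5214} and the stated $t^{-1}\sqrt{\ln(t+2)}$ remainder. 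The price is that you must verify the hypotheses of Barbalat (uniform continuity of $f(\phi(\tau))$, which follows from boundedness of $\phi'$) and justify the backwards integral representation (which needs $e^{-\tau/2}\phi(\tau)\to 0$, available since $\phi$ is bounded); both are routine. The paper's comparison-function approach avoids any dynamical-systems machinery but requires the two auxiliary lemmas and a rescaling trick to close the iteration.
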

\begin{rem}
Clearly, Theorem \ref{thm1.3} follows from above result immediately. Note that for $\ka>1$ and generic nontrivial odd periodic $u_0$ we could have
$\alpha_*=0$. An easy example is $u_0(x) =\sin 2x$.  On the other hand, a further interesting
question is to investigate whether the following scenario is possible: namely if we denote
\begin{align}
		\alpha_1(t)= \int_{\mathbb T} u(t,x) \sin x dx.
\end{align}
then for some $t=t_c$, $\alpha_1(t)=0$ for $t\ge t_c$, and $\alpha_1(t)\ne 0$ for $t<t_c$ with $t_c-t$ sufficiently small.

\end{rem}

\begin{proof}
	We first consider $\ka>1$. Write
	\begin{align}
		u= \Pi_1 u + \Pi_{\ge 2} u,
	\end{align}
	where the operators $\Pi_1$, $\Pi_{\ge 2}$ were defined in \eqref{Pidef}. By Theorem \ref{5sdy3}
	the term $\Pi_{\ge 2} u$ has the desired decay for $t\ge 1$ and can be included in the
	remainder $r(t)$.  Thus we only need to treat the single-mode part $\Pi_1 u$. Denote
	\begin{align}
		&\Pi_1 u(t) = a(t) \sin x, \qquad a(t) = \frac 1 {\pi} \int_{\mathbb T} \Pi_1 u(t,x) \sin x dx;\\
		&\Pi_1(-u^3(t)) = b(t) \sin x, \qquad b(t) =
		\frac 1 {\pi} \int_{\mathbb T} \Pi_1 ( -u^3(t,x) ) \sin x dx.
	\end{align}
	By Theorem \ref{5sdy3},  we have for some $\tilde C>0$ depending only
	on ($u_0$, $\gamma$, $\ka$),
	\begin{align}
		&|b(t)| \le \tilde C e^{-3(\ka^2-1) t}, \qquad \forall\, t\ge \frac 12.
	\end{align}
	Clearly we have
	\begin{align}
		\frac d {dt} a(t)= -(\ka^2-1) a (t)+ b(t).
	\end{align}
	We then write for $t\ge 1$,
	\begin{align}
		a(t) & = e^{-(\ka^2-1) (t-\frac 12)} a(\frac 12)
		+ \int_{\frac 12}^t e^{-(\ka^2-1) (t-s) } b(s) ds \notag \\
		& = e^{-(\ka^2-1) t}
		\Bigl( e^{\frac12 (\ka^2-1)} a(\frac 12)
		+ \int_{\frac 12}^{\infty} e^{(\ka^2-1) s} b(s) ds \Bigr)+\tilde r(t),
	\end{align}
	where
	\begin{align}
		|\tilde r(t) | \le e^{-(\ka^2-1)t} \int_t^{\infty}
		e^{(\ka^2-1) s} |b(s) | ds =O(e^{-3(\ka^2-1)t}).
	\end{align}
	Clearly then \eqref{5sdy4_1} follows.

	The proof of \eqref{5sdy4_2} is slightly more intricate. We only need to treat the piece
	$\Pi_1 u$ since the part $\Pi_{\ge 2} u$ can be included in the remainder term
	$r_1(t)$.  Observe that for $t\ge \frac 12$, by Theorem \ref{5sdy3} we have
	\begin{align}
		u(t)^3 = (\Pi_1 u(t)+ \Pi_{\ge 2} u(t) )^3 = (\Pi_1 u)^3  +\tilde r(t),
	\end{align}
	where
	\begin{align}
		\|\tilde r(t) \|_{H^{10} }=
		O(t^{-\frac 52}), \qquad\forall\, t\ge \frac 12.
	\end{align}
	Denote $\Pi_1 u(t) = a(t) \sin x$, clearly
	\begin{align}
		\Pi_1 ( (\Pi_1 u(t) )^3) =\frac 34 a(t)^3  \sin x.
	\end{align}
	For $a(t)$ we have the ODE
	\begin{align} \notag
		\frac d {dt} a(t) = -\frac 3 4 a(t)^3 + b(t), \qquad t\ge \frac 12,
	\end{align}
	where $|b(t) | = O(t^{-\frac 52})$.
	
	Denote $\theta(t)=a(t)^2$. We clearly have
	\begin{align}
		\frac d {dt} \theta(t) = -\frac 32 \theta(t)^2 + O(t^{-3}).
	\end{align}
	
	By Proposition \ref{prTp8321} proved below, we have  for $t\ge 3$,
	\begin{align}
		\theta(t) = \frac {\theta_*} {t} + O(t^{-2} \ln t),
	\end{align}
	Note that $\theta_*\ge 0$ since $\theta(t)$ is always nonnegative.
	
	Now if $\theta_*=0$ we can take $\beta_*=0$ and the desired result follows easily.
	If $\theta_*>0$, then $|a(t)| \sim t^{-1/2}$ for $t$ large. By continuity it can
	only take one sign. Thus we obtain
	$\beta_*=\sqrt{\theta_*}$ or $\beta_*=-\sqrt{\theta_*}$.  The estimate for
	the remainder term is trivial. We omit the details.
\end{proof}
\begin{lem} \label{leTp5212}
	Assume $T_0\ge 1$. Suppose $\theta:\, [T_0,\infty)\to [0,\infty)$ is continuously differentiable
	and safisfy
	\begin{align}
		0<\limsup_{t\to\infty} t\theta(t)  <\infty; \qquad  \theta^{\prime}(t) \ge -\frac 32 \theta^2 (t) -t^{-2.2}, \quad\forall\, t>T_0.
	\end{align}
	Then  we have
	\begin{align}
		\liminf_{t\to \infty} t \theta(t) >0.
	\end{align}
\end{lem}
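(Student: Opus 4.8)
The plan is to prove the lemma by an elementary barrier (comparison) argument. The differential inequality says that $\theta$ is essentially a supersolution of the Riccati equation $\phi'=-\tfrac32\phi^2$, whose positive solutions decay exactly like $\tfrac{2}{3t}$; the forcing $-t^{-2.2}$ carries an exponent strictly larger than $2$ and is therefore negligible next to $\phi^2\sim t^{-2}$ along such solutions. Accordingly I will exhibit a lower barrier of the explicit form $\psi(t)=c/t$ with a suitable constant $0<c<\tfrac23$, check that $\psi$ satisfies the reversed inequality (so that it is a subsolution) for all large $t$, and then use the hypothesis $\limsup_{t\to\infty}t\theta(t)>0$ to locate a single late time at which $\theta$ already lies above $\psi$. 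From that time on the differential inequality pins $\theta$ above $\psi$, which gives the conclusion.

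\textbf{Step 1 (the barrier).} Fix any $c$ with $0<c<\tfrac23$, so that $c-\tfrac32 c^2>0$, and choose $T_1=T_1(c)>T_0$ so large that $t^{-0.2}\le c-\tfrac32 c^2$ for all $t\ge T_1$. Then for such $t$ the function $\psi(t)=c/t$ satisfies
\[
\psi'(t)=-\frac{c}{t^2}\le-\frac32\cdot\frac{c^2}{t^2}-t^{-2.2}=-\frac32\psi(t)^2-t^{-2.2},
\]
i.e.\ $\psi$ is a subsolution of the governing inequality on $[T_1,\infty)$.

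\textbf{Step 2 (catching a late time and propagating forward).} Since $\limsup_{t\to\infty}t\theta(t)>0$, pick $\ell>0$ with $\limsup_{t\to\infty}t\theta(t)>\ell$ and set $c=\min\{\ell/2,\,1/2\}$, so that $0<c<\tfrac23$ and $c<\ell$. Choose $t_*\ge T_1(c)$ with $t_*\theta(t_*)>\ell$; then $\theta(t_*)>\ell/t_*>c/t_*=\psi(t_*)$. I claim $\theta(t)\ge\psi(t)$ for all $t\ge t_*$. If not, there is $t_1>t_*$ with $\theta(t_1)<\psi(t_1)$; setting $t_2:=\sup\{s\in[t_*,t_1]:\theta(s)\ge\psi(s)\}$ we get, by continuity, $\theta(t_2)=\psi(t_2)$ and $0\le\theta<\psi$ on $(t_2,t_1]$, so $\theta(t)^2<c^2/t^2$ there. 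Writing $w=\theta-\psi$ we then have, for $t\in(t_2,t_1)$,
\[
w'(t)=\theta'(t)+\frac{c}{t^2}\ge-\frac32\theta(t)^2-t^{-2.2}+\frac{c}{t^2}>\frac{1}{t^2}\Bigl(c-\frac32 c^2-t^{-0.2}\Bigr)\ge0,
\]
using $t\ge t_*\ge T_1$ in the last inequality. Hence $w$ is nondecreasing on $[t_2,t_1]$, so $w(t_1)\ge w(t_2)=0$, contradicting $\theta(t_1)<\psi(t_1)$. Therefore $\theta(t)\ge c/t$ for all $t\ge t_*$, and consequently $\liminf_{t\to\infty}t\theta(t)\ge c>0$.

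The one genuinely substantive point is that the differential inequality \emph{propagates the lower bound forward in time}: although $\limsup_{t\to\infty}t\theta(t)>0$ only says that $\theta$ exceeds $c/t$ at \emph{some} arbitrarily large times, catching a single such time $t_*\ge T_1$ already suffices, since thereafter $\theta$ cannot drop below the subsolution $c/t$. Beyond this the argument is routine; the only mild care needed is to choose $c$ smaller than both $\ell$ (so the barrier starts beneath $\theta$) and $\tfrac23$ (so it is a subsolution), together with the elementary bookkeeping at the last crossing point $t_2$. I note that the upper bound $\limsup_{t\to\infty}t\theta(t)<\infty$ from the hypothesis is not used in this argument, and that the exponent $2.2$ enters only through being strictly greater than $2$, so any forcing $O(t^{-\mu})$ with $\mu>2$ (in particular the $O(t^{-3})$ term occurring in the intended application) would serve equally well.
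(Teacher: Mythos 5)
Your proof is correct and follows essentially the same route as the paper's: both use the explicit lower barrier $c/t$ with $c$ small enough that $c-\tfrac32c^2$ dominates $t^{-0.2}$, catch a late time where the $\limsup$ hypothesis puts $\theta$ above the barrier, and propagate the bound forward via the differential inequality (the paper argues at the first touching time, you at the last crossing time, which is only a difference in bookkeeping). Your write-up of the crossing argument is, if anything, slightly more careful than the paper's.
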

\begin{proof}
	It is natural to appeal to a maximum principle argument.  Denote
	\begin{align} \label{tmpF3321a}
		2\theta_0 = \limsup_{t\to \infty} \theta(t) t >0.
	\end{align}
	Let
	\begin{align}
		\Omega(t) = \theta(t) - \eta_0 t^{-1},
	\end{align}
	where $\eta_0>0$ satisfies
	\begin{align}
		\eta_0\le \min\{ \frac 12 \theta_0, \; \frac 1{10} \}.
	\end{align}
	By \eqref{tmpF3321a}, we can choose $t_0>0$ sufficiently large such that
	\begin{align}
	\theta(t_0) \ge \frac {\theta_0} {t_0}, \qquad
	 \frac {\eta_0}{2t_0^2} -t_0^{-2.2} >0.
	\end{align}
	Note that the second condition above guarantees that
	\begin{align}
		\frac {\eta_0} {2t^2} - t^{-2.2}>0, \qquad\forall\, t\ge t_0.
	\end{align}
	Now consider $\Omega(t)$ on the time interval $[t_0,\infty)$. If $\Omega(t)>0$
	for all $t\ge t_0$ we are done. Otherwise there exists some time $t_1>t_0$ such that
	$\Omega(t_1)=0$. But then clearly
	\begin{align}
	\theta(t_1) = \frac {\eta_0} {t_1}; \qquad
	\Omega^{\prime}(t_1) = -\frac 32 \frac {\eta_0^2} {t_1^2} + \frac {\eta_0} {t_1^2}
	-t_1^{-2.2} \ge \frac {\eta_0}{2t_1^2}-t_1^{-2.2}>0.
	\end{align}
	Thus $\Omega(t)$ continuous to be positive a little bit past $t_1$. This argument then guarantees
	that $\Omega(t)\ge 0$ for all $t\ge t_0$.
\end{proof}

\begin{lem} \label{leTp5214}
	Assume $T_0\ge 1$ and $0<\ka_0<1$. Suppose $\theta:\, [T_0,\infty)\to [0,\infty)$
	satisfies
	\begin{align}
		\theta(t) \le \frac {\ka_0} t
		\quad\mathrm{and}\quad
		 \theta(t) \le \frac 12 \int_t^{\infty} \theta(s)^2 ds + \frac 1 {2t^2}, \qquad \forall\,
		t\ge T_0.
	\end{align}
	Then there exists a constant $C_1>0$ depending on $\ka_0$ and $T_0$, such that
	\begin{align}
		\theta(t) \le \frac {C_1} {t^2}, \qquad\forall\, t\ge T_0.
	\end{align}
\end{lem}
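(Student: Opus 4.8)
The plan is to work with the tail quantity $H(t):=\int_t^\infty \theta(s)^2\,ds$. By the first hypothesis $\theta(s)\le\ka_0/s$ it is finite, indeed $H(t)\le\ka_0^2/t$; it is absolutely continuous on $[T_0,\infty)$, nonincreasing, satisfies $H(\infty)=0$, and $H'(t)=-\theta(t)^2$ for a.e.\ $t$. The crude bound $H(t)\le\ka_0^2/t$ will be used only to kill boundary terms at infinity; the essential step is to feed both hypotheses into the quadratic term. Writing $\theta(t)^2=\theta(t)\cdot\theta(t)$, estimating one factor by $\ka_0/t$ and the other by $\tfrac12 H(t)+\tfrac1{2t^2}$, one obtains the linear differential inequality
\[
-H'(t)=\theta(t)^2\le\frac{\ka_0}{t}\Bigl(\tfrac12 H(t)+\tfrac1{2t^2}\Bigr)=\frac{\ka_0}{2t}\,H(t)+\frac{\ka_0}{2t^3},\qquad\text{for a.e. }t\ge T_0.
\]

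Next I would integrate this with the integrating factor $t^{\ka_0/2}$: the function $t\mapsto t^{\ka_0/2}H(t)$ is absolutely continuous with $\bigl(t^{\ka_0/2}H(t)\bigr)'\ge-\tfrac{\ka_0}{2}t^{\ka_0/2-3}$ a.e. Integrating from $t$ to $\infty$, using $s^{\ka_0/2}H(s)\le\ka_0^2 s^{\ka_0/2-1}\to 0$ as $s\to\infty$ (valid since $\ka_0<2$) and the convergence of $\int_t^\infty s^{\ka_0/2-3}\,ds$ (valid since $\ka_0<4$), gives
\[
H(t)\le\frac{\ka_0}{4-\ka_0}\cdot\frac{1}{t^2},\qquad\forall\,t\ge T_0.
\]
Plugging this back into the second hypothesis yields $\theta(t)\le\tfrac12 H(t)+\tfrac1{2t^2}\le\bigl(\tfrac{\ka_0}{2(4-\ka_0)}+\tfrac12\bigr)t^{-2}=\tfrac{2}{4-\ka_0}\,t^{-2}$, so the conclusion holds with $C_1=2/(4-\ka_0)$ (which in fact need not depend on $T_0$ at all).

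Equivalently, to avoid any measure-theoretic fuss over $H'$, one can run a discrete bootstrap: prove by induction that $\theta(t)\le A_n/t+B_n/t^2$ on $[T_0,\infty)$, starting from $A_0=\ka_0$, $B_0=0$. Using $\theta(s)^2\le(\ka_0/s)(A_n/s+B_n/s^2)$ inside $H$ and then the second hypothesis produces the recursion $A_{n+1}=\tfrac{\ka_0}{2}A_n$, $B_{n+1}=\tfrac{\ka_0}{4}B_n+\tfrac12$, whence $A_n\to 0$ geometrically (since $\ka_0<2$) and $B_n\to 2/(4-\ka_0)$; letting $n\to\infty$ gives the same estimate. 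The one genuinely delicate point—the reason the trivial bound $\tfrac1{2t^2}\le\tfrac1{2T_0 t}$ is not enough—is that a naive bootstrap starting from $\theta\lesssim 1/t$ stalls: the nonlinear feedback $\int_t^\infty\theta^2$ only gains one power of $t$, not two, so one must genuinely couple the a priori linear decay $\theta\le\ka_0/t$ with the integral inequality (equivalently, exploit that the feedback constant $\ka_0/2$ is strictly less than $1$) in order to upgrade the rate from $1/t$ to $1/t^2$. Everything else is routine.
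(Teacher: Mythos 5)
Your proof is correct, but it follows a genuinely different route from the paper's. The paper iterates the one-parameter bound $\theta(t)\le\alpha_k/t$ with $\alpha_{k+1}=\alpha_k^2$ (so $\alpha_k=\kappa_0^{2^k}\to 0$ doubly exponentially, which is where $\kappa_0<1$ is used), but each improved bound is only valid on the shrinking domain $t\ge 1/\alpha_k$; it then observes $\alpha_k\le t^{-1/2}$ on $[1/\alpha_k,1/\alpha_k^2]$ to patch together $\theta(t)\le t^{-3/2}$ for large $t$, and runs the integral inequality once more to reach $t^{-2}$ (the dependence of $C_1$ on $T_0$ comes from covering the initial time segment by the a priori bound). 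You instead \emph{linearize} the quadratic term by pairing one factor of $\theta$ with the a priori bound $\kappa_0/s$, which turns the problem into either a linear Gronwall-type differential inequality for the tail $H(t)=\int_t^\infty\theta^2$ (solved with the integrating factor $t^{\kappa_0/2}$, where the boundary term at infinity is killed by $H(t)\le\kappa_0^2/t$ and $\kappa_0<2$), or the two-parameter recursion $A_{n+1}=\tfrac{\kappa_0}{2}A_n$, $B_{n+1}=\tfrac{\kappa_0}{4}B_n+\tfrac12$ for the ansatz $A_n/t+B_n/t^2$. Both of your variants check out, and they buy something the paper's argument does not: a bound valid uniformly on all of $[T_0,\infty)$ with the explicit constant $C_1=2/(4-\kappa_0)$, independent of $T_0$, and requiring only $\kappa_0<2$ rather than $\kappa_0<1$. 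The only hypothesis you use implicitly is measurability of $\theta$ (needed for $H$ to be the absolutely continuous indefinite integral of $-\theta^2$), which is already implicit in the statement since the tail integral must make sense; in the paper's application $\theta$ is $C^1$ anyway, and your discrete bootstrap avoids even this point. Your closing remark correctly identifies the crux shared by both arguments: a naive single-power bootstrap from $\theta\lesssim 1/t$ stalls, and one must exploit that the feedback coefficient is a strict contraction.
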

\begin{proof}
	We begin by noting that, if we  assume
	\begin{align}
		\theta(t) \le \frac {\alpha} {t}, \qquad\forall\, t\ge \max\{ T_0, \;\frac 1 {\alpha} \}.
	\end{align}
	then we obtain
	\begin{align}
		\theta(t) \le \frac 12 \cdot \frac {\alpha^2} t+ \frac 1{2t^2} \le
		\frac {\alpha^2} {t}, \qquad\forall\, t\ge \max\{T_0,
		\; \frac 1 {\alpha^2} \}.
	\end{align}
	Now define $\alpha_0=\ka_0<1$, and $\alpha_{k+1} =\alpha_k^2$. Note that
	\begin{align}
		\alpha_k =e^{2^k \ln \ka_0}.
	\end{align}
	Clearly it holds that
	\begin{align}
		\theta(t) \le \frac {\alpha_k} t, \qquad\forall\, t\ge \max\{T_0, \; \frac 1 {\alpha_k} \}.
	\end{align}
	Consider $t \in [\frac 1 {\alpha_k}, \; \frac 1 {\alpha_{k+1} } ]=[\frac 1 {\alpha_k},
	\frac 1 {\alpha_k^2}]$.  Clearly it holds that
	\begin{align}
		\alpha_k \le t^{-\frac 12}.
	\end{align}
	Thus we have for all $t\in [\frac 1 {\alpha_k}, \; \frac 1 {\alpha_{k+1} }]$ with
	$\frac 1{\alpha_k} \ge T_0$, it holds that
	\begin{align}
		\theta(t) \le t^{-\frac 32}.
	\end{align}

	Thus we have for all $t$ sufficiently large
	\begin{align}
		\theta(t) \le t^{-\frac 32}.
	\end{align}
	Iterating this estimate again we obtain $\theta(t) \le O(t^{-2})$.
\end{proof}

\begin{prop} \label{prTp8321}
	Assume $T\ge 3$. Suppose $\theta: \; [T,\infty)\to [0,\infty)$ is
	continuously differentiable and satisfy
	\begin{align}
		 \sup_{t\ge T_0} t \theta(t) <\infty;\qquad
		 \theta^{\prime}(t)=-\frac 32 \theta^2(t) +F(t), \quad \forall\, t>T,
	\end{align}
	where  for some $K_0>0$
	\begin{align}
		|F(t) | \le K_0 t^{-3}, \qquad\forall\, t\ge T.
	\end{align}
	Then there exists $\theta_* \in \mathbb R$, such that
	\begin{align}
		\theta(t) = \frac {\theta_*} t+ R(t),
	\end{align}
	where
	\begin{align}
		\sup_{t\ge T} \frac{|R(t)|} {\frac {\ln t} {t^2} } <\infty.
	\end{align}
\end{prop}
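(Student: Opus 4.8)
The plan is to split into two regimes according to the value of $\lambda:=\limsup_{t\to\infty}t\theta(t)$, which is finite by the standing hypothesis $\sup_{t}t\theta(t)<\infty$ and nonnegative since $\theta\ge 0$. Two facts hold in both regimes and are recorded first. (i) $\theta(t)\to 0$ as $t\to\infty$: integrating the ODE from $t$ to $S$ gives $\theta(S)-\theta(t)=-\tfrac32\int_t^S\theta^2+\int_t^S F$, and since $|F|\le K_0 s^{-3}$ is integrable and $\theta(S)\ge 0$, this forces $\int_t^\infty\theta^2<\infty$, so $\theta(S)$ has a limit $\ell\ge 0$; if $\ell>0$ then $\int^\infty\theta^2=\infty$, a contradiction, so $\ell=0$. (ii) Letting $S\to\infty$ yields the integral identity $\theta(t)=\tfrac32\int_t^\infty\theta(s)^2\,ds-\int_t^\infty F(s)\,ds$ for all $t\ge T$.

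First regime: $\lambda=0$, i.e.\ $t\theta(t)\to 0$. I claim $\theta_*=0$ works, in fact with the stronger bound $\theta(t)=O(t^{-2})$. From (ii) and $|F|\le K_0 s^{-3}$ one gets $\theta(t)\le \tfrac32\int_t^\infty\theta(s)^2\,ds+\tfrac{K_0}{2t^2}$ for $t\ge T$, while $t\theta(t)\to 0$ supplies $\theta(t)\le \ka_0/t$ eventually, for any $\ka_0<1$. After the harmless rescaling $\tilde\theta(\tau)=a\,\theta(b\tau)$ with $b=a/3$ and $a\ge 9K_0$ these become exactly the hypotheses of Lemma \ref{leTp5214}, which gives $\theta(t)\le C_1 t^{-2}$; alternatively one may run the bootstrap directly, since $\theta\le \alpha/t$ implies $\theta\le \tfrac32\alpha^2/t+O(t^{-2})$, iterated along intervals $[\alpha^{-1},\alpha^{-2}]$ as in the proof of Lemma \ref{leTp5214}. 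Taking $R(t)=\theta(t)$ closes this case.

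Second regime: $\lambda>0$. Now $0<\limsup_{t\to\infty}t\theta(t)<\infty$ and, for $t$ large, $\theta'=-\tfrac32\theta^2+F\ge -\tfrac32\theta^2-K_0 t^{-3}\ge -\tfrac32\theta^2-t^{-2.2}$, so Lemma \ref{leTp5212} upgrades this to $\liminf_{t\to\infty}t\theta(t)>0$. Hence there are $0<c\le C$ and $T_1\ge T$ with $c/t\le\theta(t)\le C/t$ for $t\ge T_1$; in particular $\theta>0$ there, so $w:=1/\theta$ is $C^1$ on $[T_1,\infty)$ and the Riccati equation becomes the nearly affine ODE $w'(t)=\tfrac32-F(t)\,w(t)^2$. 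Put $b(t):=w(t)-\tfrac32 t$; then $b'(t)=-F(t)w(t)^2$ with $|b'(t)|\le K_0 t^{-3}(t/c)^2=(K_0/c^2)t^{-1}$, so $|b(t)|\le |b(T_1)|+(K_0/c^2)\ln(t/T_1)=O(\ln t)$. Since $w(t)=\tfrac32 t+b(t)>0$ and $b(t)=o(t)$, inverting gives
\[
\theta(t)=\bigl(\tfrac32 t+b(t)\bigr)^{-1}=\frac{2}{3t}-\frac{4b(t)}{9t^2}+O\bigl((\ln t)^2 t^{-3}\bigr)=\frac{2}{3t}+O\bigl(\ln t\cdot t^{-2}\bigr),
\]
so $\theta_*=2/3$ and $R(t)=\theta(t)-\tfrac{2}{3t}$ obeys the claimed bound on $[T_1,\infty)$; on the compact piece $[T,T_1]$ the bound is automatic since $R$ is bounded and $t^2/\ln t$ is bounded below (recall $T\ge 3$). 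In particular $\theta_*\in\{0,\tfrac23\}$ in all cases.

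The main obstacle is the exact logarithmic loss in the second regime. Writing $s=\ln t$ and $\psi=t\theta$, one has $\tfrac{d\psi}{ds}=\psi-\tfrac32\psi^2+O(e^{-s})$: the nonzero equilibrium $\psi=2/3$ is attracting with linearized rate precisely $-1$, which is resonant with the $O(e^{-s})$ forcing, and this resonance is exactly what produces the factor $\ln t$ (and shows it cannot be removed in general). The reciprocal substitution packages this cleanly, but it is legitimate only after Lemma \ref{leTp5212} has pinned $\theta(t)$ from below by $c/t$; without that lower bound one cannot rule out $\theta$ vanishing or decaying faster, which is why the split into the two regimes is unavoidable.
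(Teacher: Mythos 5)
Your proposal is correct and follows essentially the same route as the paper: the same case split on $\limsup_{t\to\infty}t\theta(t)$, Lemma \ref{leTp5212} plus the reciprocal substitution $w=1/\theta$ (giving $w'=\tfrac32-Fw^2$ and hence $w=\tfrac32 t+O(\ln t)$) in the positive case, and the rescaling into Lemma \ref{leTp5214} in the zero case. You are somewhat more explicit than the paper in deriving the integral inequality $\theta(t)\le\tfrac32\int_t^\infty\theta^2+\tfrac{K_0}{2t^2}$ and in pinning down $\theta_*\in\{0,\tfrac23\}$, but the argument is the same.
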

\begin{proof}
	Note that we only need to investigate the regime $t\gg 1$.
	We shall discuss two cases:
	
	\noindent Case 1. $\limsup\limits_{t\to \infty} t \theta(t) >0$. In this case we use Lemma
	\ref{leTp5212}.  Clearly for $T_0$ sufficiently large we have
	\begin{align}
		\theta(t) t \sim 1, \qquad\forall\, t\ge T_0.
	\end{align}
	From the ODE we obtain
	\begin{align}
		\frac d {dt} \Bigl( \frac 1 {\theta} \Bigr) = \frac 32 +O(t^{-1}).
	\end{align}
	It follows that for $T_0^{\prime}$ sufficiently large and all $t\ge T_0^{\prime}+2$,
	\begin{align}
		\theta(t) = \frac 1 {d_1 +d_2 (t-T_0^{\prime})+ O(\ln (t-T_0^{\prime} ))},
	\end{align}
	where $d_1>0$, $d_2>0$ are constants. The desired asymptotics then follows
	easily.
	
	\noindent  Case 2. $\limsup_{t\to \infty} \theta(t) t=0$. In this case we make a change of variable:
	\begin{align}
		t= N \tau, \; \theta(t) = \gamma \Theta(\tau), \quad \frac N {\gamma} F(t) =\tilde F(\tau).
	\end{align}
	Clearly
	\begin{align}
	\frac d {d\tau} \Theta(\tau) = -  \frac 32 \gamma N \Theta^2 + \tilde F(\tau),
	\quad\mbox{and}\quad
	|\tilde F(\tau)| \le K_0 \frac 1 {\gamma N^2} \tau^{-3}.
	\end{align}
	Thus if we take $\gamma = \frac 1{3N}$ and $N$ sufficiently large, we obtain
	\begin{align}
		\Theta(\tau) \le \frac 12 \int_{\tau}^{\infty} \Theta^2(s) ds + \frac 1 {2\tau^2},
		\qquad\forall\, \tau \ge \tau_0,
	\end{align}
	where $\tau_0$ is sufficiently large. We then use Lemma \ref{leTp5214}
	to conclude that $\Theta(\tau) =O(\tau^{-2})$. Thus in this case
	$\theta(t) = O(t^{-2})$.
\end{proof}

\section{Concluding remarks.}
In this work we considered the  classification of
steady states to the one-dimensional periodic Allen-Cahn equation with standard
double well potential. We gave a full classification of all possible steady states and identitified
their precise dependence on the diffusion coefficient in terms of energy and
profiles. We found a novel self-replicating property of steady state solutions
amongst the hierarchy
solutions organized according to the diffusion parameter. We developed
a new modulation theory around these steady states and proved sharp convergence
results. We discuss below a few possible future directions.

\begin{itemize}
	\item [(1).] Classification for other  models with different linear dissipations and
	 nonlinearities. Even for the classical Allen-Cahn case, one can investigate the singular
	 potential functions such as the logarithmic function given by
	\begin{align}f(u)=-u+\frac18\log\frac{1+u}{1-u},\end{align}
	or the sine-Gordon type
	\begin{align}f(u)=-\sin u.\end{align}
	Besides the one dimensional theory, we also expect  some generalizations to higher dimensions.
	
	\item [(2).] Patching and extension and solutions across general interfaces. It is natural to consider extending solutions of
	\begin{align}\mathcal Lu-f(u)=0\quad\mbox{in}\quad \Omega,\end{align}
	where $\mathcal L$ is the linear part and $f$ denotes the nonlinear part with appropriate boundary conditions. The task is to investigate under what conditions we can extend the solution across a portion of the boundary of $\Omega$ which is assumed to be a hypersurface or even some lower dimensional interface. In one dimension the situation is simple via reflection, but the general situation certainly merits further investigation.

	\item [(3).] Convergence theory for general initial data, and also for other equations and phase field models. These include nonlocal Allen-Cahn equations driven by general polynomials or logarithmic or even mildly singular nonlinearities, also one can investigate Cahn-Hillard equations, Molecular Beam epitaxy equation, time fractional equations and so on.
\end{itemize}

\appendix

\section{Proof of Proposition \ref{pra.1}}\label{append1}
We prove Proposition \ref{pra.1} step by step.

(1) In the case of $C>\frac12$, we can see that $u'$ never changes sign and it implies that $u$ is either an increasing or a decreasing function. In addition $|u'|$ has a positive lower bound, it implies $u$ is unbounded. Thus, there is no bounded solution.
\smallskip

(2) In the case of $C = \frac 1 2$, \eqref{a.id1} becomes
\begin{align}\ka^2(u')^2=\frac12(u^2-1)^2.\end{align}
It is easy to check that $u=1$ or $-1$ is always a solution to the above equation.
In the range $\{x\mid |u(x)|<1\}$ we solve the above ODE and get $u=\pm\tanh\frac{x+c}{\sqrt{2}\ka}$, it defines an entire solution of \eqref{a.ac}.
While if $\{x\mid |u(x)|>1\}$ is not empty, then we can use the ODE $u'=\frac {1}{\sqrt 2}(u^2-1)$ to derive that the solution must be unbounded.
Therefore, $\{x\mid |u(x)|>1\}=\emptyset$. As a result, we get either $|u|\equiv1$ or $u=\tanh\frac{x+c}{\sqrt{2}\ka}$ in this case.
\smallskip

(3) In the case of $0<C<\frac 1 2$, according to \eqref{a.id1}, we have
\begin{equation}
\frac 1 2(u^2-1)^2+C-\frac 1 2 \geq 0,
\end{equation}
i.e.,
\begin{equation}
|u|\geq\sqrt{1+\sqrt{1-2C}} \quad\mbox{or} \quad |u|\leq\sqrt{1-\sqrt{1-2C}}.
\end{equation}
If there exists some point $x_1$ such that $u(x_1)>\sqrt{1+\sqrt{1-2C}}$ (multiplying by $-1$ if necessary), then we see from \eqref{a.id1},  that either $u(x)$ is strictly increasing for $x\in(x_1,+\infty)$ and $u'(x)$ has a positive lower bound, or strictly decreasing for $x\in(-\infty,x_1)$ and $u'(x)$ has a negative upper bound.
Consequently, $u(x)$ is unbounded, which implies that such $x_1$ can not exist.
Further, $u \equiv \sqrt{1+\sqrt{1-2C}}$ is obviously not the solution to \eqref{a.ac}.
Therefore, we can claim that
\begin{equation}
\label{a.claim}
|u|\leq\sqrt{1-\sqrt{1-2C}}.
\end{equation}

Now we show that $u$ has local maxima and minima not at infinity.
Otherwise, $u(x)$ is monotonic  when $|x|$ is sufficiently large. Without loss of generality, suppose that $u(x)$ is monotone increasing for $x$ large. Then by \eqref{a.claim} and \eqref{a.id1} we have
\begin{align}\lim\limits_{x\to\infty}u(x)=\sqrt{1-\sqrt{1-2C}}\quad\mathrm{and}\quad \lim\limits_{x\to+\infty}u'(x)=0.\end{align}
Using \eqref{a.ac} we derive that $u''(x)\neq 0$ when $u(x)$ is around $\sqrt{1-\sqrt{1-2C}}$. Contradiction arises. Thus, $u(x)$ have both local maxima and minima.
Let $x=a$ be a local maximum point and $x=b$ be the closest local minimum point to $a$, then by \eqref{a.id1} we get
\begin{equation}
u(a)=\sqrt{1-\sqrt{1-2C}}\quad \mathrm{and}\quad
u(b)=-\sqrt{1-\sqrt{1-2C}}.
\end{equation}
By reflection symmetry,  $u(2b-a)=\sqrt{1-\sqrt{1-2C}}$. Repeating the reflection process we could see that $u(x)$ is a periodic function with minimal period equals to $|2b-2a|$.

(4) Finally, for the last statement, it can be obtained $|u|\geq \sqrt{2}$ or $u \equiv 0$ from \eqref{a.id1}.
From a discussion similar to the above, $|u|\geq \sqrt{2}$ will lead to contradiction.
Then we have $u\equiv 0$.

\vspace{1cm}
\begin{center}
	{\bf Acknowledgement}
\end{center}	

The research of T. Tang is partially supported by the Special Project on High-Performance Computing of the National Key R\&D Program under No. 2016YFB0200604, the National Natural Science Foundation of China (NSFC) Grant No. 11731006, the NSFC/Hong Kong RGC Joint Research Scheme (NSFC/RGC 11961160718), and the fund of the Guangdong Provincial Key Laboratory of Computational Science and Material Design (No. 2019B030301001).
The research of D. Li is supported in part by Hong Kong RGC grant GRF 16307317 and 16309518.
The research of W. Yang is supported by the National Natural Science Foundation of China (NSFC) Grant No. 11801550 and No. 11871470.
The research of C. Quan is supported by the National Natural Science Foundation of China (NSFC) Grant No. 11901281 and the Guangdong Basic and Applied Basic Research Foundation (2020A1515010336).

\vspace{1.5cm}

\frenchspacing
\bibliographystyle{plain}

\end{document}